\newcommand{\suchthat}{\;\ifnum\currentgrouptype=16 \middle\fi|\;}
\numberwithin{equation}{section}
\newtheorem{thm}{Theorem}
\numberwithin{thm}{section}
\newaliascnt{lemma}{thm}
\newtheorem{lem}[lemma]{Lemma}
\newaliascnt{proposition}{thm}
\newtheorem{prop}[proposition]{Proposition}
\newaliascnt{corollary}{thm}
\newtheorem{corollary}[corollary]{Corollary}
\newaliascnt{definition}{thm}
\newtheorem{mydef}[definition]{Definition}
\newaliascnt{remark}{thm}
\newtheorem{remark}[remark]{Remark}
\def\R{{\mathbb{R} }}
\begin{document}

\title{Stabilization of the non-homogeneous Navier-Stokes equations in a 2d channel}

\date{\today}

\author{Sourav Mitra}
\thanks{{Acknowledgments}: The author wishes to thank the ANR project ANR-15-CE40-0010 IFSMACS as well as the Indo-French Centre for Applied Mathematics (IFCAM) for the funding provided during this work.}
\address{Sourav Mitra, Institut de Math\'ematiques de Toulouse; UMR5219; Universit\'e de Toulouse; CNRS; UPS IMT, F-31062 Toulouse Cedex 9, France.}
\email{Sourav.Mitra@math.univ-toulouse.fr}

\begin{abstract}
	In this article we study the local stabilization of the non-homogeneous Navier- Stokes equations in a 2d channel around Poiseuille flow. We design a feedback control of the velocity which acts on the inflow boundary of the domain such that both the fluid velocity and density are stabilized around  Poiseuille flow  provided the initial density is given by a constant added with a perturbation, such that the perturbation is supported away from the lateral boundary of the channel. Moreover the feedback control operator we construct has finite dimensional range.
\end{abstract}
\maketitle
\noindent{\bf{Key words}.} Non-homogeneous Navier-Stokes equations, inflow boundary control, feedback law.
\smallskip\\
\noindent{\bf{AMS subject classifications}.}  35K55, 76D05, 76D55, 93D15 , 93D30.  
\section{Introduction}
\subsection{Settings of the problem}
We are interested in stabilizing the density dependent Navier-Stokes equations around some stationary state $(\rho_{s},v_{s})$ (where $(\rho_{s},v_{s},p_{s})$ is a stationary solution) in a two dimensional channel $\Omega$. For that we will use an appropriate boundary control $u_{c}$ acting on the velocity in the inflow part of the boundary $\partial{\Omega}$.\\
Let $d$ be a positive constant. Throughout this article we will use the following notations (see Figure 1.)
\begin{equation}\label{domain}
\begin{array}{l}
\Omega =(0,d)\times (0,1),\quad \Gamma=\partial \Omega,\quad 
Q_{T}=\Omega \times (0,T),\quad \Sigma_{T}=\Gamma \times (0,T)\quad \mathrm{for}\quad 0<T\leqslant \infty. 
\end{array}
\end{equation} 
The unit outward normal to the boundary $\Gamma$ is denoted by $n.$ The velocity, density and pressure of the fluid are denoted respectively by ${v},$ ${\rho}$ and $p.$ The viscosity $\nu>0$ of the fluid is a positive constant. We consider the following control system
\begin{equation}\label{1-3}
\left\{
\begin{array}{ll}
\displaystyle
\frac{\partial \rho}{\partial t} + \mbox{div}(\rho {v})=0&\quad \mbox{in}\quad Q_{\infty},
\vspace{1.mm}\\
\displaystyle
\rho=\rho_s &\quad \mbox{on}\quad \{(x,t)\in\Sigma_{\infty}\suchthat (v(x,t)\cdot n(x))<0\},
\vspace{1.mm}\\
\displaystyle
\rho(x,0)=\rho_{s}+\rho_0&\quad\mbox{in}\quad\Omega,
\vspace{1.mm}\\
\displaystyle
\rho \left(\frac{\partial {v}}{\partial t} +({v}\cdot\nabla){v} \right) - \nu \Delta {v} + \nabla p = 0&\quad \mbox{in}\quad Q_{\infty},
\vspace{1.mm}\\
\displaystyle
\mbox{div}({v}) = 0&\quad \mbox{in}\quad Q_{\infty},
\vspace{1.mm}\\
\displaystyle
{v}=v_{s} + u_{c}\chi_{\Gamma_{c}} &\quad \mbox{on}\quad \Sigma_{\infty},
\vspace{1.mm}\\
\displaystyle
{v}(x,0)={v}_s+{v}_{0}&\quad\mbox{in}\quad\Omega,
\end{array} \right.
\end{equation}
where $u_{c}\chi_{\Gamma_{c}}$ is a control function for the velocity $v$ with $\chi_{\Gamma_{c}}$ denoting the characteristics function of a set $\Gamma_{c}$ which is compactly supported on $\Gamma.$ The set $\Gamma_{c}$ will be precisely defined shortly afterwards. The equation \eqref{1-3}$_{1}$ is the mass balance equation and \eqref{1-3}$_{4}$ is the momentum balance equation. The triplet $(\rho_{s},v_{s},p_{s})$ is the Poiseuille profile defined as follows 
\begin{equation}\label{vs}
\begin{array}{l}
\rho_{s}(x_{1},x_{2})=1,\quad{v}_s(x_{1},x_{2})= \
\begin{bmatrix}
x_2(1-x_2)
\\
0
\end{bmatrix},\quad p_{s}=-2\nu x_{1},\quad\mbox{in}\quad\Omega.
\end{array}
\end{equation}
Observe that $(\rho_{s},v_{s},p_{s})$ (given by \eqref{vs}) is a stationary solution of the Navier-Stokes equations \eqref{1-3}. We remark that in the definition \eqref{vs} of the Poiseuille profile we can choose $\rho_{s}$ to be any positive constant in place of one up to modifying $p_{s}$ accordingly. Also in the definition \eqref{vs} one can consider $v_{s}=(\alpha x_{2}(1-x_{2}),0),$ for a positive constant $\alpha>0.$ 
The strategy and results of our analysis apply for any constant $\rho_{s}>0$ and $\alpha>0.$\\
 The aim of this article is to determine feedback boundary control ${u}_{c}$ (the control of the velocity) such that the solution $(\rho, v)$ of the controlled system
is exponentially stable around the stationary solution $(\rho_{s},v_{s})$ provided the perturbation $(\rho_{0},v_{0})$ of the steady state $(\rho_{s},v_{s})$ is sufficiently small (in some suitable norm).\\ 
In view of the stationary profile \eqref{vs}, it is natural to control the inflow part of the boundary, $i.e.$ we will consider the control function $u_{c}$ supported on
\begin{equation}\label{inflow}
\begin{array}{l}
\Gamma_{in}=\{x\in \Gamma \mid (v_{s}\cdot n)(x) < 0\}=\{0\}\times (0,1).
\end{array}
\end{equation}
In fact we do slightly more and control on some open subset $\Gamma_{c}$ of $\Gamma_{in}.$ We consider $\Gamma_{c}$ of the following form
\begin{equation}\label{dgc}
\begin{array}{l}
\Gamma_{c}=\{0\}\times (L,1-L)\subset\Gamma_{in},
\end{array}
\end{equation}
 for some fixed $0<L<\frac{1}{2}.$
 \begin{remark}
 We consider the control zone of the form \eqref{dgc} to simplify the notations. In fact our analysis allows to consider any subset $\{0\}\times (A,B)$ ($0<A<B<1$) of $\Gamma_{in}$ as the control zone.
 \end{remark}
 To state our results precisely, we introduce some appropriate functional spaces. 
 \subsection{Functional framework for the Naviers-Stokes equation }\label{funcframe}
 Let $ H^s (\Omega;\mathbb{R}^N)\,\mbox{and}\,L^2(\Omega;\mathbb{R}^N)$
 denote the vector valued Sobolev spaces. If it is clear from the context, we may simply denote these spaces by $H^{s}(\Omega)$ and $L^{2}(\Omega)$ both for scalar and vector valued functions.
 The same notational conventions will be used for the trace spaces. 
 We now introduce different spaces of divergence free functions and some suitable spaces of boundary data:
 \begin{equation}\nonumber
 \begin{array}{l}
 \displaystyle
 {V}^{s}(\Omega) = \{ {y} \in {H}^s(\Omega;\mathbb{R}^{2})\suchthat \mbox{div} {y} =0 \quad \mbox{in} \quad \Omega \}\quad \mbox{for} \quad s\geqslant 0,
 \vspace{1.mm}\\
 {V}^{s}_{n} (\Omega) = \{{y} \in {H}^{s} (\Omega;\mathbb{R}^{2})\suchthat \mbox{div}{y}=0\quad \mbox{in} \quad \Omega, \quad {y}\cdot {n} =0\quad \mbox{on}\quad \Gamma\}\quad \mbox{for} \quad s\geqslant 0,
 \vspace{1.mm}\\
 {V}^{s}_{0}(\Omega)=\{{y} \in {H}^{s}(\Omega;\mathbb{R}^{2}) \suchthat  \mbox{div}{y}=0\quad \mbox{in} \quad \Omega,\quad {y}=0 \quad \mbox{on} \quad \Gamma \}\quad \mbox{for} \quad s\in(\frac{1}{2},\frac{3}{2}),
 \vspace{1.mm}\\
 \displaystyle{V}^{s} (\Gamma)= \{{y} \in {H}^{s} (\Gamma;\mathbb{R}^{2})\suchthat \int\limits_{\Gamma}y\cdot n\,dx=0 \}\quad\mbox{for}\quad s\geqslant 0.
 \end{array}
 \end{equation}
 The spaces ${V}^{s}(\Omega)$ and ${V}^{s}(\Gamma)$ are respectively equipped with the usual norms of ${H}^{s}(\Omega)$ and ${H}^{s}(\Gamma),$ which will be denoted by $\|\cdot \|_{{V}^{s}(\Omega)}$ and $\|\cdot \|_{{V}^{s}(\Gamma)}.$\\ 
 From now onwards we will identify the space $V^{0}_{n}(\Omega)$ with its dual.\\
 For $0<T\leqslant\infty$ let us introduce the following functional spaces adapted to deal with functions of the time and space variables.
 \begin{equation}\nonumber
 \begin{array}{l}
 {V}^{s,\tau}(Q_{T}) = {{H}^\tau}(0,T;{V}^0 (\Omega))\cap { L}^2 (0,T;{V}^{s}(\Omega)) \quad \mbox{for} \quad s ,  \tau \geq 0, 
 \vspace{1.mm}\\	
 {V}^{s,\tau}(\Sigma_{T}) = {{H}^\tau}(0,T;{V}^0 (\Gamma))\cap {L}^2 (0,T;{V}^s(\Gamma)) \quad \mbox{for} \quad s ,  \tau \geq 0.
 \end{array}
 \end{equation}
 We also fix the convention that for any two Banach spaces $\mathcal{X}$ and $\mathcal{Y},$ the product space $\mathcal{X}\times\mathcal{Y}$ is endowed with the norm
 $$\forall\,\, (x,y)\in\mathcal{X}\times\mathcal{Y},\,\,\|({x},{y})\|_{\mathcal{X}\times\mathcal{Y}}=\|{x}\|_{\mathcal{X}}+\|{y}\|_{\mathcal{Y}},$$
 where $\|.\|_{\mathcal{X}}$ and $\|.\|_{\mathcal{Y}}$ denotes the norms in the corresponding spaces.\\
 \subsection{The main result} 
 We now precisely state our main result in form of the following theorem.
 \begin{thm}\label{main}
 	Let $\beta>0,$ $A_{1}\in (0,\frac{1}{2}).$ There exist a constant $\delta>0$ such that for all $(\rho_{0},{v}_{0})\in L^{\infty}(\Omega)\times{V}^{1}_{0}(\Omega)$ satisfying
  \begin{equation}\label{1-4}
  \begin{array}{l}
  \mbox{supp}(\rho_{0})\subset [0,d]\times (A_{1},1-A_{1}),
  \end{array}
  \end{equation}
  and 
  \begin{equation}\nonumber
  \begin{array}{l}
  \| (\rho_{0},{v}_{0})\|_{L^{\infty}(\Omega)\times{V}^{1}_{0}(\Omega)}\leqslant \delta,
  \end{array}
  \end{equation}
 	there exists a control ${u}_{c}\in H^{1}(0,\infty;C^{\infty}(\overline{\Gamma}_{c})),$ for which the system \eqref{1-3} admits a solution  
 	$$(\rho,v)\in  L^{\infty}(Q_{\infty})\times{V}^{2,1}(Q_{\infty}),$$ satisfying the following stabilization requirement
 	\begin{equation}\label{1-6}
 	\begin{array}{l}
 	\| e^{\beta t}(\rho-\rho_{s},{v}-v_{s}) \|_{L^{\infty}(Q_{\infty})\times{V}^{2,1}(Q_{\infty})} \leqslant C \|  ({\rho_{0}},{v}_{0})\|_{L^{\infty}(\Omega)\times{V}^{1}_{0}(\Omega)},    
 	\end{array}
 	\end{equation}
 	 for some constant $C>0.$ Moreover, $\rho=\rho_{s}$ for $t$ sufficiently large.
 \end{thm}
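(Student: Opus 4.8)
The plan is to work in perturbation variables $\sigma = \rho - \rho_s$, $w = v - v_s$, $q = p - p_s$, and to exploit a structural decoupling of the linearization. Writing out the equations, the density perturbation obeys a transport equation $\partial_t\sigma + (v_s + w)\cdot\nabla\sigma = 0$ driven by the total velocity, while the momentum balance becomes an Oseen-type system for $w$ that is coupled to $\sigma$ only through terms of the form $\sigma\,\partial_t w$, $\sigma(v_s\cdot\nabla)w$, etc., which are bilinear in $(\sigma,w)$ and hence quadratically small; this is because $\nabla\rho_s = 0$ and $(v_s\cdot\nabla)v_s = 0$ for the Poiseuille profile, together with the stationary identity $-\nu\Delta v_s + \nabla p_s = 0$. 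Thus at the linear level the velocity equation is the pure Oseen system
\begin{equation}\nonumber
\partial_t w - \nu\Delta w + (v_s\cdot\nabla)w + (w\cdot\nabla)v_s + \nabla q = 0,\quad \mathrm{div}\,w = 0,
\end{equation}
with $w = u_c\chi_{\Gamma_c}$ on $\Sigma_\infty$, decoupled from the density. I would therefore treat the velocity stabilization and the density transport separately at the linear level and recouple them through a fixed-point argument for the nonlinearity.

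For the linear velocity problem, first I would introduce the Oseen operator $A_\beta$ (the Leray-projected Stokes--Oseen operator shifted by $\beta$) on $V^0_n(\Omega)$ and record that, being a compact perturbation of the Stokes operator, it has compact resolvent and hence only finitely many eigenvalues $\lambda$ with $\mathrm{Re}\,\lambda \geqslant -\beta$. The next step is to verify the Fattorini--Hautus unique continuation criterion for the adjoint eigenfunctions restricted to $\Gamma_c$, which guarantees that the finite-dimensional unstable subspace $Z_u$ is controllable from the boundary actuator. I would then choose finitely many smooth profiles $g_1,\dots,g_K \in C^\infty(\overline{\Gamma}_c)$ spanning the needed actuation directions and construct a feedback $u_c = \sum_{j} K_j(Pw)\,g_j$, where $P$ is the spectral projection onto $Z_u$, either by solving a finite-dimensional Riccati equation or by direct pole placement on $Z_u$. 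After lifting the boundary data to the interior in the usual way and recasting the closed loop as an abstract evolution equation, this produces a feedback operator of finite-dimensional range for which the closed-loop semigroup satisfies $\|e^{\beta t}w(t)\|\lesssim\|w_0\|$, giving exponential decay of the linear velocity with prescribed rate $\beta$ and a control in $H^{1}(0,\infty;C^\infty(\overline{\Gamma}_c))$ (the $H^1$-in-time regularity coming from $w\in V^{2,1}$).

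For the density I would use the method of characteristics for the flow generated by $v_s + w$. Along characteristics $\sigma$ is constant, and on the inflow part the incoming value is $\rho_s$, so $\sigma = 0$ there; hence it suffices to follow the characteristics issuing from $\mathrm{supp}(\rho_0)$. Because the first component of $v_s$ equals $x_2(1-x_2)$, which is bounded below by $A_1(1-A_1) > 0$ on the support strip $(A_1,1-A_1)$, and because $w$ is small while $v_s$ vanishes on the lateral walls, the perturbed flow still moves every such characteristic monotonically towards $\{x_1 = d\}$ while keeping $x_2$ away from $\{0,1\}$. Consequently all characteristics carrying a nonzero density perturbation leave $\Omega$ in a finite time $T_0\approx d/(A_1(1-A_1))$, after which $\sigma\equiv 0$; the $L^\infty$-norm of $\sigma$ is controlled by $\|\rho_0\|_{L^\infty}$ throughout, which yields both the weighted $L^\infty(Q_\infty)$ bound and the claim that $\rho = \rho_s$ for $t$ large.

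Finally I would close the nonlinear problem by a contraction scheme in a small ball of $L^\infty(Q_\infty)\times V^{2,1}(Q_\infty)$ with exponential weight $e^{\beta t}$: given a small $(\sigma,w)$, solve the transport equation for the new density, then feed the resulting variable-density and quadratic terms as a source into the stabilized closed-loop Oseen system to obtain the new velocity, and show this map is a contraction for $\delta$ small. The main obstacle I expect is precisely this recoupling: the density is only $L^\infty$ and the transport equation has no smoothing, so the velocity estimates must be arranged to tolerate a merely bounded, transported density coefficient in the top-order term $\sigma\,\partial_t w$ (which one may absorb by treating $(1+\sigma)\partial_t w$ as a variable-coefficient time derivative, $1+\sigma$ being close to $1$ in $L^\infty$), while simultaneously guaranteeing that the smallness of $w$ is strong enough to preserve the support condition $\mathrm{supp}(\sigma(t))\subset[0,d]\times(A_1',1-A_1')$ under the perturbed flow. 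Balancing the maximal parabolic regularity needed to control the transporting velocity against the weak regularity of the density is the crux of the argument.
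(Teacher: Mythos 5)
Your overall architecture coincides with the paper's: perturbation variables with exponential weight, linearize only the Oseen part, finite-dimensional boundary feedback via a Fattorini--Hautus unique continuation test, finite exit time for the density along characteristics, and a fixed point to recouple. However, there are three concrete gaps in the execution. First, the regularity of the transporting field: in dimension two $H^{2}(\Omega)\not\hookrightarrow W^{1,\infty}(\Omega)$, so $v_{s}+e^{-\beta t}y$ with $y\in V^{2,1}(Q_{\infty})$ is not Lipschitz in space and the classical method of characteristics (``$\sigma$ is constant along characteristics'') is not available. The paper must instead work with renormalized weak solutions in the sense of Boyer, define the flow only through the Osgood condition, and replace the pointwise characteristics argument by a Lyapunov functional $E_{loc}(t)=\frac12\int_\Omega\Psi|\sigma|^2$ built from a transported cutoff, first for $y\in V^{2,1}\cap L^{2}(0,T;W^{1,\infty})$ and then by density. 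You flag ``weak regularity of the density'' as the crux but do not supply the mechanism that resolves it.

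Second, your closing step is a contraction, but the solution map of the transport equation is only known to be \emph{continuous} in the velocity (stability of renormalized solutions, $y_n\to y$ in $L^1$ implies $\sigma_n\to\sigma$ in $C^0([0,T];L^p)$), not Lipschitz in the norms you would need; for merely $L^\infty$ densities and non-Lipschitz velocities there is no usable Lipschitz estimate of $\sigma$ with respect to $y$. This is precisely why the paper uses Schauder's theorem on a convex set $D_{\mu}$ that is compact in a weighted $L^2$ space, rather than Banach's fixed point. Third, to obtain $y\in V^{2,1}(Q_{\infty})$ you need compatibility at $t=0$ between $y_{0}\in V^{1}_{0}(\Omega)$ (which vanishes on $\Gamma$) and the boundary value of the feedback control; asserting that the $H^1$-in-time regularity of $u_c$ ``comes from $w\in V^{2,1}$'' is circular. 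The paper resolves this by dynamically extending the state with the ODE $w_c'+\gamma w_c=\varphi_c$, $w_c(0)=0$, and solving the Riccati equation on the extended system, so that the control vanishes at $t=0$ by construction. Without some such device your static feedback $u_c=\sum_j K_j(Pw)g_j$ need not vanish at $t=0$ and the maximal regularity claim fails.
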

We now make precise the structure of the control function $u_{c}$ we are going to construct. We will show the existence of a natural number $N_{c},$ and a family $$\{{g_{j}}\suchthat 1\leqslant j\leqslant N_{c}\},$$ of smooth functions supported on $\Gamma_{c}$ such that the control ${u}_{c}$ acting on the velocity is given as follows
 \begin{equation}\label{findcon}
 \begin{array}{l}
 {u}_{c}(x,t)=e^{-\beta t}\sum\limits_{j=1}^{N_{c}}{w_{j}}(t){{g}_{j}}(x),
 \end{array}
 \end{equation}
 where $w_{c}(t)=(w_{1}(t),....,w_{N_{c}}(t))$ is the control variable and is given in terms of a feedback operator $\mathcal{K}.$ More precisely, $w_{c}=(w_{1},...,w_{N_{c}})$ satisfies the following ODE
 \begin{equation}\nonumber
 \begin{array}{l}
 w_{c}^{'}=-\gamma w_{c}+\mathcal{K}\begin{pmatrix}
 P(v-v_{s})\\w_{c}
 \end{pmatrix}\quad \mbox{in}\quad (0,\infty),\quad
 w_{c}(0)=0,
 \end{array}
 \end{equation}
 where $\gamma$ is a positive constant, $P$ is the Leray projector from $L^{2}(\Omega)$ to $V^{0}_{n}(\Omega)$ (\cite[Section 1.4]{temam}) and $\mathcal{K}\in\mathcal{L}(V^{0}_{n}(\Omega)\times\mathbb{R}^{N_{c}},\mathbb{R}^{N_{c}})$ (the feedback operator $\mathcal{K}$ is determined in Section \ref{efcl}).\\
  The boundary control \eqref{findcon} we construct has a finite dimensional range and resembles with the control designed in \cite{ray2f}. The construction of our control basis $\{g_{j}\suchthat 1\leqslant j \leqslant N_{c}\}$ is different from the one done in \cite{ray2f}. In \cite{ray2f} it is constructed using generalized eigenvectors of the adjoint of Oseen operator while we construct it only by using eigenvectors of adjoint of Oseen operator relying on the construction of \cite{raym}. We will not consider any control on the transport equation modeling the density and as for the homogeneous Navier-Stokes equations, we show that considering a control $u_{c}$ of the velocity is enough to stabilize the whole system \eqref{1-3}.\\  
 The stabilizability of the constant density (or homogeneous) incompressible Navier-Stokes equation (with Dirichlet or mixed boundary condition) by a finite dimensional feedback Dirichlet boundary control has already been studied in the literature. For instance in \cite{ray2f} it is proved that in a $C^{4}$ domain the velocity profile $v,$ solution to system \eqref{1-3}$_{4}$-\eqref{1-3}$_{7}$ with $\rho=1$ is locally stabilizable around a steady state ${v}_{s}$ (${v}_{s}\in H^{3}(\Omega;\mathbb{R}^{2})$) by a finite dimensional Dirichlet boundary control localized in a portion of the boundary and moreover the control $u_{c}$ is given as a feedback of the velocity field.\\ 
 Unlike the constant density incompressible Navier-Stokes equations (which is of parabolic nature), the system \eqref{1-3} obeys a coupled parabolic-hyperbolic dynamics. Local exact controllability to trajectories of the system \eqref{1-3} was studied in \cite{erv1}. In the present article we answer the question posed in \cite{erv1} on the stabilizability of the system \eqref{1-3} around the Poiseuille profile. In proving the controllability results one of the main geometric assumptions of \cite{erv1} is that 
 \begin{equation}\label{geoasm}
 \begin{array}{l}
 \overline{\Omega}=\Omega^{T}_{{out}}=\{x\in \overline{\Omega}\suchthat \exists t\in (0,T),\,\mbox{s.t}\, \overline{X}(t,0,x)\in \mathbb{R}^{d}\setminus\overline{\Omega}\},
 \end{array}
 \end{equation}
 where $\overline{X}$ is the flow corresponding to the target velocity trajectory $\overline{v}_{s}$ defined as
 \begin{equation}\nonumber
 \begin{array}{l}
 \forall (x,t,s)\in \mathbb{R}^{d}\times [0,T]^{2},\quad \partial_{t}\overline{X}(x,t,s)=\overline{v}_{s}(\overline{X}(x,t,s),t),\quad \overline{X}(x,s,s)=x.
 \end{array}
 \end{equation}
 In the article \cite{erv1} the assumption \eqref{geoasm} plays the key role in controlling the density of the fluid. In our case since the target velocity trajectory is $v_{s}$ (defined in \eqref{vs}) the assumption \eqref{geoasm} is not satisfied because $v_{s}$ vanishes at the lateral boundary of the domain $\Omega.$ Hence to control the density we make a parallel assumption \eqref{1-4}. Indeed, the assumption \eqref{1-4} implies that $supp(\rho_{0})\Subset\Omega^{T}_{{out}}.$ The assumption \eqref{1-4} exploits the hyperbolic nature of the continuity equation \eqref{1-3}$_{1}$ in order to control the coupled system \eqref{1-3}. The condition \eqref{1-4} in fact guarantees that the density exactly equals ${\rho}_{s}=1,$ after some time $T_{1}=T_{A_{1}}>\frac{d}{\inf\limits_{x_{2}\in[A_{1},1-A_{1}]}v_{s}}$ (will be detailed in Section \ref{density}) so that the non-homogeneous Navier-Stokes equations become homogeneous after some finite time. In \cite{erv1} the authors uses two control functions (one for the density and one for velocity) for the purpose of controlling the non-homogeneous fluid. Contrary to that we use only one control acting on the velocity to stabilize the coupled system \eqref{1-3}.
 \subsection{Decomposition of the boundary $\Gamma$ and comment on the support of control}
  Based on the velocity profile $v_{s}$ (as defined in \eqref{vs}) we can rewrite the boundary of $\Omega$ as follows
$$	\Gamma= \Gamma_{in}\cup \Gamma_{out}\cup \Gamma_0,$$ where
\begin{equation}\label{1-2}
\begin{array}{l}
\Gamma_{in}\,\mbox{is\,defined\,in}\,\eqref{inflow},
\vspace{1.mm}\\
\Gamma_{out}=\{x\in \Gamma \mid (v_{s}\cdot n)(x) >0\}=\{d\}\times (0,1),
\vspace{1.mm}\\
\Gamma_{0}=((0,d)\times \{0\})\cup ((0,d)\times \{1\})=\Gamma_{b}\cup\Gamma_{h}\quad(\mbox{Figure}\,1).
\end{array}
\end{equation} 
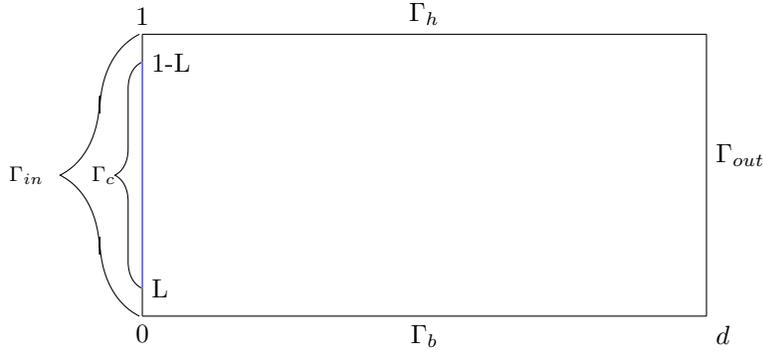
\begin{figure}[h!]\label{pic}
	\centering
	\begin{tikzpicture}[scale=0.75]
	\draw (5,0)node [below] {$\Gamma_b$};
	\draw (0,0)node [below] {$0$};
	\draw (10,0)node [below right] {$d$};
	\draw (0,5)node [above] {$1$};
	\draw (10,2.5)node [above right] {$\Gamma_{out}$};
	\draw (0,4.5)node [right] {1-L};
	\draw (0,.5)node [right] {L};
	\draw (5,5) node [above] {$\Gamma_h$};
	\draw (0,0) -- (10,0);
	\draw (10,0) -- (10,5);
	\draw (10,5) -- (0,5);
	\draw (0,5) -- (0,4.5);
	\draw (0,0.5) -- (0,0);
	\draw[blue] (0,4.5) -- (0,0.5);
	\draw [decorate,decoration={brace,amplitude=10pt},xshift=-0.5pt,yshift=0pt]
	(0,.5) -- (0,4.5) node [black,midway,xshift=-0.5cm]
	{\footnotesize $\Gamma_{c}$};
	\draw [decorate,decoration={brace,amplitude=30pt},xshift=-1.5pt,yshift=0pt]
	(0,0) -- (0,5) node [black,midway,xshift=-1.5cm]
	{\footnotesize $\Gamma_{in}$};
	\end{tikzpicture}
	\caption{Picture of the domain $\Omega$.}
\end{figure}
\begin{remark}\label{ngin}
	From now onwards we will use the notation $\Gamma_{in}$ to denote the inflow boundary of both the vector fields $v_{s}$ and $v.$ This is a slight abuse of notation but we will prove the existence of the controlled trajectory $v$ in a small neighborhood (in a suitable norm) of $v_{s}$ provided the perturbation $v_{0}$ is small. This will guarantee that $\Gamma_{in}$ and the inflow boundary of the vector field $v_{s}$ are identical. For the details we refer the reader to the Corollary \ref{p3.0.2}.
\end{remark}  
We will look for a control function ${u}_{c}$ of the form \eqref{findcon} which is compactly supported in $\Gamma_{c}.$ More particularly we will construct the finite dimensional basis $\{{{g}_{j}}\suchthat{1\leqslant j \leqslant N_{c}}\}$ of the control space in such a way that $g_{j}$ ($\forall\, 1\leqslant j \leqslant N_{c}$) is smooth and supported in $\Gamma_{c}.$ 
\subsection{Strategy} 	
(i) As our goal is to stabilize the solution $(\rho,v)$ of \eqref{1-3} around the stationary solution $(1,v_{s})$ with a rate $e^{-\beta t}$ we introduce
\begin{equation}\label{chun}
\begin{array}{l}
y=e^{\beta t}({v}-{v}_s),\quad \sigma=e^{\beta t}(\rho - 1),\quad q=e^{\beta t}(p-p_{s}),\quad u=e^{\beta t}{u}_{c}.
\end{array}
\end{equation}
To be consistent with the notations $y$ and $\sigma,$ we further introduce the following
\begin{equation}\label{y0s0}
\begin{array}{l}
\sigma_{0}=\rho_{0},\quad y_{0}=v_{0}.
\end{array}
\end{equation} 
As in our case the control \eqref{findcon} is supported in the inflow boundary, in view of the notations introduced in \eqref{1-2} and the Remark \ref{ngin} we use \eqref{findcon} to rewrite the system \eqref{1-3} in the following form
\begin{equation}\label{2-1}
	\left\{\begin{array}{lll}
		\displaystyle
		&\displaystyle\frac{\partial \sigma}{\partial t}+(({v}_s+e^{-\beta t}{y}) \cdot \nabla)\sigma-\beta\sigma=0\quad &\mbox{in}\quad Q_{\infty},
		\vspace{1.mm}\\
		&\displaystyle\sigma (x,t)=0\quad&\mbox{on}\quad \Gamma_{in} \times(0,\infty),
		\vspace{1.mm}\\
		&\displaystyle\sigma (x,0)=\sigma_0\quad&\mbox{in}\quad\Omega,
		\vspace{1.mm}\\
		&\displaystyle \frac{\partial {y}}{\partial t}-\beta {y}-\nu \Delta {y}+ ({v}_s \cdot \nabla){y}+({y} \cdot \nabla)v_s +\nabla  q=\mathcal{F}(y,\sigma)\quad& \mbox{in}\quad Q_{\infty},\\[2.mm]
		&\displaystyle\mbox{div}\,{y}=0\quad &\mbox{in}\quad Q_{\infty},
		\vspace{1.mm}\\
		&\displaystyle{y}=0\quad &\mbox {on} \quad (\Gamma_0\cup \Gamma_{out}) \times (0,\infty),\\[1.mm]
		&\displaystyle{y}=\sum\limits_{j=1}^{N_{c}}{w_{j}}(t){{g}_{j}}(x)\quad& \mbox{on} \quad \ \Gamma_{in} \times (0,\infty),
		\vspace{1.mm}\\
		&\displaystyle{y}(x,0)={y}_0\quad&\mbox{in}\quad\Omega,
	\end{array}\right.
	\end{equation}
	where 
	$$\mathcal{F}({y},\sigma)=-e^{-\beta t}{\sigma}\frac{\partial {y}}{\partial t}-e^{-\beta t}({y}\cdot \nabla){y}-e^{-\beta t}{\sigma}(v_{s}\cdot \nabla){y} -e^{-\beta t}{\sigma}({y}\cdot \nabla)v_{s}-e^{-2\beta t}{\sigma}({y}\cdot \nabla){y}+\beta e^{-\beta t} \sigma {y}.$$
To solve a nonlinear stabilization problem the usual method is to first solve the stabilization problem for the linearized system and then use a fixed point method to conclude the stabilizability of the original nonlinear problem \eqref{2-1}. In this article due to regularity issues of the transport equation we avoid linearizing the whole system. Instead, we only linearize the equation \eqref{2-1}$_{4}$ satisfied by $y$ $i.e.$ we replace the nonlinear terms appearing in the equation \eqref{2-1}$_{4}$ by a non homogeneous source term ${f}$ and we leave the equation of the density \eqref{2-1}$_{1}$ unchanged. Hence we start by analyzing the stabilizability of the system
\begin{equation}\label{2-2}
\left\{\begin{array}{ll}
\displaystyle
\frac{\partial \sigma}{\partial t}+(({v}_s+e^{-\beta t}{y}) \cdot \nabla)\sigma-\beta\sigma=0\quad &\mbox{in} \quad Q_{\infty},
\vspace{1.mm}\\
\sigma (x,t)=0 \quad& \mbox{on} \quad \Gamma_{in} \times(0,\infty),
\vspace{1.mm}\\
\sigma (x,0)=\sigma_0\quad&\mbox{in}\quad\Omega,\\[1.mm]
\displaystyle \frac{\partial {y}}{\partial t}-\beta {y}-\nu \Delta {y}+ ({v}_s \cdot \nabla){y}+({y} \cdot \nabla){v}_s +\nabla  q={f}\quad &\mbox{in}\quad Q_{\infty},
\vspace{2.mm}\\
\mbox{div}\,{y}=0\quad& \mbox{in}\quad Q_{\infty},
\vspace{2.mm}\\
{y}=0\quad& \mbox {on} \quad (\Gamma_{0}\cup\Gamma_{out}) \times (0,\infty),\\[1.mm]
{y}=\sum\limits_{j=1}^{N_{c}}{w_{j}}(t){{g}_{j}}(x) \quad& \mbox{on} \quad \ \Gamma_{in} \times (0,\infty),
\vspace{1.mm}\\
{y}(x,0)={y}_0\quad&\mbox{in}\quad\Omega.
\end{array}\right.
\end{equation}
(ii) Section \ref{velocity} is devoted to study the stabilization of the linearized Oseen equations \eqref{2-2}$_{4}$-\eqref{2-2}$_{8}$. In that direction we first write \eqref{2-2}$_{4}$-\eqref{2-2}$_{8}$ using operator notations. This is done in the spirit of \cite{raye} but with suitable modifications which are necessary since our domain is Lipschitz. To prove the stabilizability of this system we look for a control of the form \eqref{findcon}. We will choose the functions $\{g_{j}\suchthat 1\leqslant j\leqslant N_{c}\},$ supported on $\Gamma_{c},$ so that we can prove some unique continuation property equivalent to the stabilizability of the system under consideration. This is inspired from \cite{raym}. Using the fact that $g_{j}$ (for all $1\leqslant j\leqslant N_{c}$) is supported on a smooth subset of $\Gamma$ we further show that $g_{j}$ is in $C^{\infty}(\Gamma).$ This in particular implies that the control $u_{c},$ of the form \eqref{findcon}, is smooth in the space variable.\\
 (iii) Next our aim is to find a boundary control which is given in terms of a feedback law. At the same time we have to design the control such that the velocity $y$ belongs to the space $V^{2,1}(Q_{\infty}).$ Indeed the $H^{2}(\Omega)$ regularity of the velocity field will be used later to prove the stabilization of the continuity equation. This creates another difficulty because to prove the $V^{2,1}(Q_{\infty})$ regularity of $y$ solution of \eqref{2-2}$_{4}$-\eqref{2-2}$_{8}$, one must have a compatibility between the initial velocity $y_{0},$ assumed to be in $V^{1}_{0}(\Omega)$ and the boundary condition ($i.e.$ the control $u$). We deal with this issue by adding a system of ordinary differential equations satisfied by $w_{c}.$ The corresponding extended system satisfied by $(y,w_{c})$ reads as follows
 \begin{equation}\label{2-7}
 \left\{\begin{array}{ll}
 \displaystyle
 \frac{\partial{y}}{\partial t}-\beta{y}-\nu\Delta{y}+(v_{s}\cdot \nabla){y}+({y}\cdot \nabla)v_{s}+\nabla q= {f}\quad& \mbox{in} \quad Q_{\infty},
 \vspace{1.mm}\\
 \mbox{div}\,{y}=0\quad& \mbox{in}\quad Q_{\infty},
 \vspace{1.mm}\\
 {y}=0\quad &\mbox{on}\quad ({\Gamma}_{0}\cup \Gamma_{out})\times (0,\infty),
 \vspace{1.mm}\\
 {y}=\sum\limits_{j=1}^{N_{c}}{w_{j}}(t){{g}_{j}}(x)\quad &\mbox{on}\quad \Gamma_{in}\times (0,\infty),
 \vspace{1.mm}\\
 {y}(x,0)={y}_{0}\quad&\mbox{in}\quad\Omega,
 \vspace{1.mm}\\
 w_{c}^{'}+{\gamma }w_{c}=\varphi_{c}\quad& \mbox{in}\quad (0,\infty),
 \vspace{1.mm}\\
 w_{c}(0)=0\quad&\mbox{in}\quad\Omega,
 \end{array} \right.
 \end{equation}
 where $\gamma>0$ is a positive constant and $\varphi_{c}(\in \mathbb{R}^{N_{c}})$ is a new control variable which will be determined later as a feedback of the pair $(y,w_{c}).$ Since $y(.,0)=0,$ imposing $w_{c}(0)=0$ furnishes the desired compatibility between the initial and boundary conditions of $y$ which is necessary to obtain the $V^{2,1}(Q_{\infty})$ regularity of $y.$\\
  First we will construct the control $\varphi_{c}$ given in terms of a feedback operator which is able to stabilize the homogeneous ($i.e.$ when $f=0$) extended system \eqref{2-7} by solving a Riccati equation. Then we show that the same control stabilizes the entire non-homogeneous ($i.e.$ with the non-homogeneous source term $f$) extended system \eqref{2-7} by assuming that the non-homogeneous term $f$ belongs to some appropriate space.\\[1.mm]
(iv) In Section \ref{density}, we study the stability of the continuity equation \eqref{2-2}$_{1}$-\eqref{2-2}$_{3}$. We assume the velocity field in ${V}^{2,1}(Q_{\infty})$ and $\sigma_{0}\in L^{\infty}(\Omega)$ such that \eqref{1-4} (recall from \eqref{y0s0} that $\sigma_{0}=\rho_{0}$) holds. Since $\sigma_{0}\in L^{\infty}(\Omega)$ and the transport equation has no regularizing effect we expect that $\sigma\in L^{\infty}_{loc}(Q_{\infty}).$ The Cauchy problem for the continuity equation in the presence of an inflow boundary is rather delicate. In our case we use results from \cite{boy} for the existence of a unique renormalized weak solution of the problem \eqref{2-2}$_{1}$-\eqref{2-2}$_{3}$ in the space $L^{\infty}(Q_{\infty}).$ Our proof of the stabilization of the transport equation satisfied by the density relies on the fact that the characteristics equation corresponding to the velocity field is well posed. As we are dealing with velocity fields in $L^{2}(0,\infty,H^{2}(\Omega)),$ which is not embedded in $L^{1}_{loc}(0,\infty,W^{1,\infty}(\Omega))$ in dimension two, our analysis relies on \cite{zuazua} (see also \cite[Theorem 3.7]{Bahouri-Chemin-Danchin}), stating the well-posedness of the equation of the flow as a consequence of Osgood condition. Then considering the velocity field $(v_{s}+e^{-\beta t}y)$ as a small perturbation of $v_{s}$ (see \eqref{vs} for the definition) we prove that the characteristic curves corresponding to the perturbed velocity field stay close to that of $v_{s}$ in a suitable norm. 
Using the fact that the characteristics corresponding to the velocity fields $v_{s}$ and $(v_{s}+e^{-\beta t}y)$ are close we show that the particles initially lying in the support of $\sigma_{0}$ are transported out of the domain in some finite time $T>T_{A_{1}}=\frac{d}{A_{1}(1-A_{1})}$ along the flow corresponding to the perturbed velocity field. Consequently, 
the solution $\rho$ of the equation \eqref{1-3}$_{1}$-\eqref{1-3}$_{3}$ reaches exactly the target density $\rho_{s}=1$ after the time $T.$ \\
(v) Finally in Section \ref{final}, we will use Schauder's fixed point theorem to conclude that the control designed in step (iii) locally stabilizes the non linear coupled system \eqref{2-2} and consequently Theorem \ref{main} follows. 
\subsection{Bibliographical comments}
In the literature many works have been dedicated to the study of incompressible Navier-Stokes equations. For the classical results concerning the existence-uniqueness and regularity issues of the constant density incompressible Navier-Stokes equations we refer the reader to \cite{temam}. The reader can also look into \cite{galdi} for a thorough analysis of the subject. Intricate situations may arise due to the lack of regularity when special geometric assumptions are imposed on the boundary $\partial\Omega.$ For example, the domain can have corners or edges of prescribed geometric shape. For the analysis of these situations the interested reader may look into \cite{mazya} and \cite{deuring}. In the present article the functional settings for the incompressible Navier-Stokes equations is motivated from  \cite{raye}. The results of \cite{raye} are stated in a domain with smooth boundary. Thus to adapt the functional framework from \cite{raye} in the case of a rectangular domain we have used some results from \cite{gris} and \cite{Osborn}.\\
 Regarding the Cauchy problem of the non-homogeneous Navier-Stokes equations, the existence of classical solution for the non-homogeneous Navier-Stokes equations with homogeneous Dirichlet boundary condition for velocity in space dimension three is studied in \cite{anton}. 
 Results concerning the existence-uniqueness of global in time strong solution (with small initial data and small volume force) in space dimension three can be found in \cite{lady}. In dimension two the existence and uniqueness of global in time solution (without any smallness restriction on the data) is also proved in \cite{lady}. In both of these references the velocity field is Lipschitz and the initial condition of the density is smooth enough, hence the transport equation satisfied by the density can be classically solved using the method of characteristics. To deal with less regular velocity field the concept of renormalized solution was initially developed in \cite{Lio} and later suitably adapted in several contexts. For instance, one can find an application of a suitable variation of the Di-Perna-Lions theory to prove an existence and uniqueness result for the inhomogeneous Navier-Stokes equation in \cite{des}. 
All of these articles assume that the velocity field satisfies $v\cdot n=0.$ In the present article we are dealing with the target velocity $v_{s},$ which is inflow on a part of the boundary $\partial\Omega.$ For a velocity field with inflow, one must assume a suitable boundary condition for the density so that the transport equation satisfied by the density is well posed. This problem is analyzed in the articles \cite[Chapter VI]{boy} and \cite{boy2}, where the authors suitably define the trace for the weak solution of the transport equation. They also prove that these traces enjoy the renormalization property. In the present article we use the existence, uniqueness and stability results for the transport equation from \cite{boy} and \cite{boy2}. For a more intricate case involving nonlinear outflow boundary condition, similar results can be found in \cite{boy1}.\\ 
There is a rich literature where the question of the feedback boundary stabilization of the constant density incompressible Navier-Stokes equation is investigated. For the feedback boundary stabilization of a general semilinear parabolic equation one can look into the article \cite{fur2}. The feedback stabilization of the 2D and 3D constant density Navier-Stokes equations can be found in the articles \cite{fur1} and \cite{fur3} respectively. Concerning the stabilization of homogeneous Navier-Stokes equations one can also consult \cite{ray2f} and \cite{ray3} where the feedback boundary controls are achieved by solving optimal control problems. We would also like to mention the articles \cite{munt} and \cite{barb} where the authors prove the feedback stabilization of the same model around the Poiseuille profile by using normal velocity controllers. The idea of constructing a finite dimensional boundary feedback control to stabilize a linear parabolic equation dates back to the work \cite{tri1}. In our case we adapt the ideas from the articles \cite{raym} and \cite{ray2f} in order to construct a feedback boundary control with finite dimensional range to stabilize the linear Oseen equations. Actually for constant density fluids, the article \cite{raym} deals with a more intricate case involving mixed boundary conditions. Control properties of the variable density Navier-Stokes equations have been studied in the article \cite{frcara}, which proves several optimal control results in the context of various cost functionals. We also refer to the article \cite{erv1} where the authors prove the local exact controllability to a smooth trajectory of the non-homogeneous incompressible Navier-Stokes equation.\\
The study of the controllability and stabilizability issues of a system coupling equations of parabolic and hyperbolic nature is relatively new in the literature. We would like to quote a few articles in that direction. Null-controllability of a system of linear thermoelasticity (coupling wave and heat equations) in a $n-$ dimensional, compact, connected $C^{\infty}$ Riemannain manifold is studied in \cite{lebeauzua}. Controllability and stabilizability issues of compressible Navier-Stokes equations are investigated in \cite{raymcho}, \cite{rammy}, \cite{eggp} (in dim $1$) and \cite{ervgugla} (in dim $2$ and $3$). The compressible Navier-Stokes equations are also modeled by a coupled system of momentum balance and mass balance equations but the coupling is different from the one we consider in system \eqref{1-3}.\\
Let us emphasize that in the system \eqref{1-3} the control acts only on the velocity of the fluid and not on the density. In the literature there are articles dealing with controllability issues of a system of PDEs in which the controls act only on some components of the system. We would like to quote a few of them. We refer to \cite{lissy} where the authors prove local null-controllability of the three dimensional incompressible Navier-Stokes equations using distributed control with two vanishing components. A related result concerning the stabilizability of $2-$d incompressible Navier-Stokes equations using a control acting on the normal component of the upper boundary is proved in \cite{ervcho}. In \cite{lebeauzua} to prove the null-controllability of a system of linear thermoelasticity the authors consider the control on the wave equation $i.e.$ on the hyperbolic part and not on the parabolic equation modeling the temperature. On the other hand controllability and stabilizability issues of one dimensional compressible Navier-Stokes equations have been studied in \cite{raymcho} and \cite{rammy} by using only a control acting on the velocity. In the present article we also consider the control on the velocity and not on the density but our approach exploits more directly and in a more intuitive manner the geometry of the flow of the target velocity in order to control the hyperbolic transport equation modeling the density.
\subsection{Outline} In section \ref{velocity} we study the feedback stabilization of the velocity. Section \ref{density} is devoted to the stabilization of the density. In Section \ref{final} we use a fixed point argument to prove the stabilizability of the coupled system \eqref{1-3}. Finally in Section \ref{furcom} we briefly comment on how to adapt our analysis if one wishes to control the outflow boundary $\Gamma_{out}$ or the lateral boundary $\Gamma_{0}$ of the channel $\Omega.$ 
\section{Stabilization of the Oseen equations}\label{velocity}
The goal of this section is to discuss the stabilization of the Oseen equations \eqref{2-2}$_{4}$-\eqref{2-2}$_{8}$. We will first design a localized boundary control with finite dimensional range to stabilize the linear Oseen equation \eqref{2-2}$_{4}$-\eqref{2-2}$_{8}.$ We will then construct the control as a feedback of $(y,w_{c}),$ where the pair $(y,w_{c})$ solves the extended system \eqref{2-7}. The plan of this section is as follows\\
(i) In Section \ref{stablin}, we study the stabilization of the homogeneous linear system (with $f=0$) \eqref{2-2}$_{4}$-\eqref{2-2}$_{8}$, using a finite dimensional boundary control.\\
(ii) We will analyze the feedback stabilization of the extended system \eqref{2-7} in Section \ref{stabext}. Moreover with this feedback control we will prove the $V^{2,1}(Q_{\infty})$ regularity of the solution of linear Oseen equations \eqref{2-2}$_{4}$-\eqref{2-2}$_{8}$. Using a further regularity regularity estimate (see \eqref{dofcon}) of the control $u$ we show that $(e^{-\beta t}y+v_{s})$ has the same inflow and outflow as that of $v_{s},$ provided the initial condition $y_{0}$ and the non-homogeneous source term $f$ (appearing in \eqref{2-2}$_{4}$-\eqref{2-2}$_{8}$) are suitably small (see Corollary \ref{p3.0.2} ).
\subsection{Stabilization of the linear Oseen equations}\label{stablin}
In the following section we will define some operators and present some of their properties which helps in studying the linearized Oseen equations \eqref{2-2}$_{4}$-\eqref{2-2}$_{8}.$ 
\subsubsection{Writing the equations with operators}
The following results are taken from \cite{raye} where they are stated in a $C^{2}$ domain. It is necessary to make suitable changes to adapt those results in our case since the domain $\Omega$ in our case is Lipschitz. Without going into the details of the proofs we will just comment on how to adapt those results in our case.\\
Let $P$ be the orthogonal projection operator from $L^{2}(\Omega)$ onto ${V}^{0}_{n}(\Omega)$ known as Helmholtz or Leray projector (see \cite[Section 1.4]{temam}).
\newline 
We denote by $ (A,\mathcal{D}(A)) $ (the Oseen operator) and $(A^*,\mathcal{D}(A^*))$ the unbounded operators in ${V}^{0}_{n}(\Omega),$ defined by 
\begin{equation}\label{oprA}
\begin{array}{ll} 
\mathcal{D}({A})={V}^{2}(\Omega)\cap {V}^{1}_{0}(\Omega),\quad & A{y}=\nu P \Delta {y}+\beta {y}-P(({v}_s\cdot \nabla){y})-P(({y}\cdot \nabla){v}_s),  
\vspace{1.mm}\\
{\mathcal{D}}({A}^*)={V}^{2}(\Omega)\cap {V}^{1}_{0}(\Omega), \quad & A^{*}{y}=\nu P\Delta {y} +\beta {y}+P(({v}_s\cdot \nabla){y})-P((\nabla {v}_s)^T){y}.
\end{array}
\end{equation}
For the $H^{2}(\Omega)$ regularity of the solutions of the homogeneous Dirichlet boundary value problems corresponding to the operators $A$ and $A^{*}$ in a rectangular domain $\Omega,$ one can apply \cite[Theorem 3.2.1.3]{gris}.
Since $v_{s}$ is smooth with div$(v_{s}) =0,$ we can prove the following lemma.
\begin{lem}\label{estres}
	\cite[Section 2.2]{ray2} There exists $\lambda_0 >0$ in the resolvent set of $A$ such that the following hold 
	\begin{equation}\label{2-4}
	\begin{array}{l}
	\langle (\lambda_0I -A){y},{y}\rangle _{{V}^{0}_{n}(\Omega)}\geq \frac{1}{2}|{y} |^{2}_{V^{1}_{0}(\Omega)}\quad \mbox{for all}\quad  {y} \in \mathcal{D}(A), 
	\vspace{.1cm}\\
	{\mbox{and}}\\[1.mm]
	\langle(\lambda_0 I-A^*){y},{y}\rangle_{V^{0}_{n}}(\Omega)\geq\frac{1}{2}|{y} |^{2}_{V^{1}_{0}(\Omega)}\quad \mbox{for all} \quad  {y} \in \mathcal{D}(A^*).
	\end{array}
	\end{equation}
\end{lem}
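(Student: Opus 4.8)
The plan is to prove both inequalities by a single energy computation on the quadratic form $\langle(\lambda_0 I-A)y,y\rangle_{V^0_n(\Omega)}$ and then to read off from the resulting coercivity that $\lambda_0 I-A$ is an isomorphism, so that $\lambda_0\in\rho(A)$. The scalar product $\langle\cdot,\cdot\rangle_{V^0_n(\Omega)}$ is the $L^2(\Omega)$ inner product, and since $P$ is the orthogonal projection onto $V^0_n(\Omega)$ with $Py=y$ for $y\in V^0_n(\Omega)$, every occurrence of $P$ in \eqref{oprA} may be dropped when pairing $Ay$ against $y\in\mathcal{D}(A)$. Thus for $y\in\mathcal{D}(A)=V^2(\Omega)\cap V^1_0(\Omega)$ one writes
\begin{equation}\nonumber
\langle Ay,y\rangle_{V^0_n(\Omega)}=\nu\langle\Delta y,y\rangle+\beta\|y\|^2_{L^2(\Omega)}-\langle(v_s\cdot\nabla)y,y\rangle-\langle(y\cdot\nabla)v_s,y\rangle .
\end{equation}

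I would then treat the three structural terms separately. First, since $y=0$ on $\Gamma$ (as $y\in V^1_0(\Omega)$), integration by parts gives the dissipative contribution $\nu\langle\Delta y,y\rangle=-\nu|y|^2_{V^1_0(\Omega)}$. Second, using $\mathrm{div}\,v_s=0$ together with $y=0$ on $\Gamma$, the transport term vanishes,
\begin{equation}\nonumber
\langle(v_s\cdot\nabla)y,y\rangle=\tfrac12\int_\Omega v_s\cdot\nabla(|y|^2)\,dx=-\tfrac12\int_\Omega(\mathrm{div}\,v_s)|y|^2\,dx+\tfrac12\int_\Gamma(v_s\cdot n)|y|^2\,dS=0 .
\end{equation}
Third, the genuinely non-dissipative stretching term involves no derivative of $y$, so the smoothness of $v_s$ yields the crude bound $|\langle(y\cdot\nabla)v_s,y\rangle|\leqslant\|\nabla v_s\|_{L^\infty(\Omega)}\|y\|^2_{L^2(\Omega)}$. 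Collecting these,
\begin{equation}\nonumber
\langle(\lambda_0 I-A)y,y\rangle_{V^0_n(\Omega)}\geqslant\nu|y|^2_{V^1_0(\Omega)}+\big(\lambda_0-\beta-\|\nabla v_s\|_{L^\infty(\Omega)}\big)\|y\|^2_{L^2(\Omega)} ,
\end{equation}
so choosing $\lambda_0>\beta+\|\nabla v_s\|_{L^\infty(\Omega)}$ makes the $L^2$ coefficient nonnegative and leaves a coercive lower bound $\geqslant c_0|y|^2_{V^1_0(\Omega)}$ with $c_0>0$ (normalized to the constant $\tfrac12$ as in \cite[Section 2.2]{ray2}). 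The computation for $A^*$ is identical in structure: the transport term $\langle(v_s\cdot\nabla)y,y\rangle$ again vanishes, and the lower-order term $-\langle(\nabla v_s)^Ty,y\rangle$ is controlled by $\|\nabla v_s\|_{L^\infty(\Omega)}\|y\|^2_{L^2(\Omega)}$, so the \emph{same} $\lambda_0$ delivers the second inequality.

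Finally I would deduce $\lambda_0\in\rho(A)$ from this coercivity. The bilinear form $(u,y)\mapsto\langle(\lambda_0 I-A)u,y\rangle$ extends to a continuous and coercive form on $V^1_0(\Omega)\times V^1_0(\Omega)$, so by Lax--Milgram $\lambda_0 I-A$ is an isomorphism from $V^1_0(\Omega)$ onto its dual; the elliptic $H^2$-regularity on the rectangle quoted after \eqref{oprA} (\cite[Theorem 3.2.1.3]{gris}) then upgrades any solution to $V^2(\Omega)\cap V^1_0(\Omega)=\mathcal{D}(A)$, showing that $\lambda_0 I-A\colon\mathcal{D}(A)\to V^0_n(\Omega)$ is bijective. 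The step that requires genuine care, rather than routine integration by parts, is exactly this regularity upgrade: because $\Omega$ is only Lipschitz (a rectangle with corners), the $H^2$ estimate cannot simply be imported from the smooth-domain theory of \cite{raye} and must instead be obtained from the corner-domain results of \cite{gris}. This is the one place where the geometry of $\Omega$ genuinely enters the argument, and it is the main obstacle.
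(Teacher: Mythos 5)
The paper does not actually prove this lemma: it is imported verbatim by citation from \cite[Section 2.2]{ray2}, with only the remark that the $H^{2}$ regularity on the rectangle comes from \cite[Theorem 3.2.1.3]{gris}. Your energy argument is the standard proof of exactly this statement and is sound: dropping $P$ against $y\in V^{0}_{n}(\Omega)$, the vanishing of the transport term via $\mathrm{div}\,v_{s}=0$ and $y|_{\Gamma}=0$, the $L^{\infty}$ bound on the stretching term, and the Lax--Milgram-plus-regularity argument for $\lambda_{0}\in\rho(A)$ all check out, and you correctly identify the corner-domain regularity as the only step where the geometry matters (for the Stokes system on the rectangle the relevant reference is really \cite{Osborn}, which the paper invokes elsewhere for the same purpose, rather than the scalar result of \cite{gris}). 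The one place where you wave your hands is the constant: your computation yields $\langle(\lambda_0 I-A)y,y\rangle\geq\nu|y|^{2}_{V^{1}_{0}(\Omega)}$ plus a nonnegative $L^{2}$ term, and since the $L^{2}$ term cannot be traded for gradient control, the stated constant $\tfrac12$ is only obtained when $\nu\geq\tfrac12$; for small $\nu$ the honest constant is $\nu$ (or $\nu/2$ under a different norm convention). This is immaterial for every use of the lemma in the paper (analyticity of the semigroup and compactness of the resolvent only need some positive constant), but you should state the conclusion as $\geq c_{0}|y|^{2}_{V^{1}_{0}(\Omega)}$ with $c_{0}=c_{0}(\nu)>0$ rather than claim $\tfrac12$ by ``normalization.''
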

In Lemma \ref{estres} we can always choose $\lambda_{0}>\beta,$ taking $\lambda_{0}$ larger if necessary. Throughout this article we will stick to this assumption.
Now, Lemma \ref{estres} can be used to prove the following.
\begin{lem}\label{t2-1}
	 The unbounded operator $(A,\mathcal{D}(A))$ (respectively $(A^{*},\mathcal{D}(A^{*}))$) is the infinitesimal generator of an analytic semi group on $V^{0}_{n}(\Omega)$. Moreover the resolvent of $A$ is compact. 	
\end{lem}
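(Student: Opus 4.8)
The plan is to exhibit $A$ as minus the operator associated with a bounded, $V^1_0(\Omega)$-coercive sesquilinear form, and then to invoke the standard generation theorem for such forms together with Lemma~\ref{estres}. First I would introduce on $V^1_0(\Omega)\times V^1_0(\Omega)$ the form
\begin{equation}\nonumber
a(y,z)=\nu\int_\Omega \nabla y:\nabla z\,dx-\beta\int_\Omega y\cdot z\,dx+\int_\Omega\big((v_s\cdot\nabla)y\big)\cdot z\,dx+\int_\Omega\big((y\cdot\nabla)v_s\big)\cdot z\,dx .
\end{equation}
Integrating the Laplacian by parts (the boundary term vanishes because $z\in V^1_0(\Omega)$) and using that $P$ is the orthogonal projection onto $V^0_n(\Omega)\supset V^1_0(\Omega)$, one checks that $a(y,z)=\langle -Ay,z\rangle_{V^0_n(\Omega)}$ for every $y\in\mathcal D(A)$ and $z\in V^1_0(\Omega)$. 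Thus $-A$ is the operator associated with $a$, and its domain is $V^2(\Omega)\cap V^1_0(\Omega)$ precisely by the $H^2$-regularity recalled before the statement (Grisvard's estimate in the rectangle).

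Next I would verify the two hypotheses of the form theorem. Continuity of $a$ on $V^1_0(\Omega)$ is immediate: the Dirichlet and zero-order terms are obviously bounded, and the two convective terms are controlled by $\|v_s\|_{W^{1,\infty}(\Omega)}\,\|y\|_{V^1_0(\Omega)}\|z\|_{V^1_0(\Omega)}$, which is finite since $v_s$ is a polynomial. The G\aa rding inequality is exactly the content of Lemma~\ref{estres}: with $\lambda_0$ as there,
\begin{equation}\nonumber
\operatorname{Re}a(y,y)+\lambda_0\|y\|^2_{V^0_n(\Omega)}=\langle(\lambda_0 I-A)y,y\rangle_{V^0_n(\Omega)}\geq \tfrac12|y|^2_{V^1_0(\Omega)},
\end{equation}
and by the Poincar\'e inequality on $V^1_0(\Omega)$ the seminorm controls the full $H^1$-norm. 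Setting $B=\lambda_0 I-A$, continuity and coercivity confine the numerical range of $B$ to a sector $\{|\arg\zeta|\le\theta\}$ with $\theta<\pi/2$ (because $|\operatorname{Im}\langle By,y\rangle|=|\operatorname{Im}a(y,y)|\le C|y|^2_{V^1_0(\Omega)}\le C'\operatorname{Re}\langle By,y\rangle$), while the Lax--Milgram theorem makes $B$ boundedly invertible; hence $B$ is $m$-sectorial, $-B$ generates an analytic semigroup, and therefore so does $A=-B+\lambda_0 I$ after the bounded shift. The assertion for $A^*$ follows identically from the second inequality of Lemma~\ref{estres}, or, since we are on a Hilbert space, from the fact that the adjoint of the generator of an analytic semigroup generates the adjoint analytic semigroup.

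For the compactness of the resolvent I would use that $(\lambda_0 I-A)^{-1}$ maps $V^0_n(\Omega)$ continuously into $\mathcal D(A)=V^2(\Omega)\cap V^1_0(\Omega)$, which embeds compactly into $V^0_n(\Omega)$ because $H^2(\Omega)\hookrightarrow L^2(\Omega)$ is compact by the Rellich--Kondrachov theorem on the bounded domain $\Omega$. Thus $(\lambda_0 I-A)^{-1}$ is compact on $V^0_n(\Omega)$, and the resolvent identity propagates compactness to $(\lambda I-A)^{-1}$ for every $\lambda$ in the (connected) resolvent set. \textbf{The main delicate point} is not the abstract semigroup machinery but the identification $\mathcal D(A)=V^2(\Omega)\cap V^1_0(\Omega)$: since $\Omega$ is only Lipschitz, a rectangle with corners, this elliptic $H^2$-regularity is not automatic, and it is exactly here that the Grisvard estimate for convex polygonal domains must be invoked, as already flagged before the statement; everything else is the routine form-and-Lax--Milgram argument.
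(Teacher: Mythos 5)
Your proof is correct and follows essentially the same route as the paper, which simply delegates the details to its references: analytic generation is obtained from the coercivity/resolvent estimate of Lemma \ref{estres} (exactly the G\r{a}rding inequality for your form, as in the cited Lemma 4.1 of the Raymond reference), the identification $\mathcal{D}(A)=V^{2}(\Omega)\cap V^{1}_{0}(\Omega)$ rests on Grisvard's $H^{2}$-regularity in the convex polygon as the paper itself flags, and compactness of the resolvent comes from the compact embedding of $\mathcal{D}(A)$ into $V^{0}_{n}(\Omega)$ plus the resolvent identity. Nothing is missing; you have merely written out in full what the paper outsources.
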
  
\begin{proof}
	The proof of the fact that $(A,\mathcal{D}(A))$ (respectively $(A^{*},\mathcal{D}(A^{*}))$) generates an analytic semigroup on $V^{0}_{n}(\Omega)$ uses the resolvent estimate \eqref{2-4} and can be found in \cite[Lemma 4.1]{raye}. One can mimic the arguments used in \cite[Lemma 3.1]{fur1} to show that the resolvent of $A$ is compact. The reader can also look into \cite[Section 3]{ray2f}. 
\end{proof}
Now we want to find a suitable operator $B$ to write down the Oseen equation as a boundary control system.\\
Consider the following system of equations
\begin{equation}\label{liftopt}
\left\{ \begin{array}{ll}
\lambda_0 {y} - \nu  \Delta {y}-\beta {y}+(({v}_s\cdot \nabla){y})+(({y}\cdot \nabla){v}_s)+\nabla q=0\quad &\mbox{in}\quad \Omega,
\vspace{1.mm}\\
\mbox{div}({y})=0 \quad& \mbox{in}\quad \Omega, 
\vspace{1.mm}\\
{y}={u} \quad& \mbox{on}\quad \Gamma.
\end{array}\right.
\end{equation}
\begin{lem}\label{h2reg}
	Let $\lambda_{0}$ be as in Lemma \ref{estres}. For $u\in V^{3/2}(\Gamma),$ the system \eqref{liftopt} admits a unique solution $(y,q)\in V^{2}(\Omega)\times {H^{1}(\Omega)}/{\mathbb{R}}$ and moreover the following inequality holds
	\begin{equation}\label{estyq}
	\begin{array}{l}
	\|y\|_{V^{2}(\Omega)}+\|q\|_{H^{1}(\Omega)/\mathbb{R}}\leqslant C\|u\|_{H^{3/2}(\Gamma)},
	\end{array}
	\end{equation}
	for some constant $C>0.$
\end{lem}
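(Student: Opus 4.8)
The plan is to reduce \eqref{liftopt} to a homogeneous Dirichlet problem by lifting the boundary data, solve the resulting problem with the resolvent of the Oseen operator $A$, and then recover the pressure by de Rham's theorem. First I would construct a divergence-free lift of $u$. Since $u\in V^{3/2}(\Gamma)$ carries the zero-flux condition $\int_{\Gamma}u\cdot n\,dx=0$, there exists $w\in V^{2}(\Omega)$ with $w=u$ on $\Gamma$ and $\|w\|_{V^{2}(\Omega)}\leqslant C\|u\|_{H^{3/2}(\Gamma)}$. To build $w$, one first extends $u$ to some $\tilde{w}\in H^{2}(\Omega;\mathbb{R}^{2})$ with $\tilde{w}|_{\Gamma}=u$ by the trace theorem, then corrects its divergence: one solves $\mathrm{div}\,\psi=-\mathrm{div}\,\tilde{w}$ with $\psi=0$ on $\Gamma$ using a Bogovskii-type right inverse of the divergence. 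The compatibility condition $\int_{\Omega}\mathrm{div}\,\tilde{w}\,dx=\int_{\Gamma}u\cdot n\,dx=0$ holds, and since $\mathrm{div}\,\tilde{w}\in H^{1}(\Omega)$ this right inverse (available on the convex, hence star-shaped, domain $\Omega$) produces $\psi\in H^{2}(\Omega)\cap H^{1}_{0}(\Omega)$ with $\|\psi\|_{H^{2}(\Omega)}\leqslant C\|\mathrm{div}\,\tilde{w}\|_{H^{1}(\Omega)}$. Setting $w=\tilde{w}+\psi$ gives the desired divergence-free lift.

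Next, writing $y=z+w$, the unknown $z$ solves the homogeneous Dirichlet Oseen problem with right-hand side $F=-\lambda_{0}w+\nu\Delta w+\beta w-(v_{s}\cdot\nabla)w-(w\cdot\nabla)v_{s}\in L^{2}(\Omega)$, which satisfies $\|F\|_{L^{2}(\Omega)}\leqslant C\|w\|_{V^{2}(\Omega)}\leqslant C\|u\|_{H^{3/2}(\Gamma)}$ because $v_{s}$ is smooth. Applying the Leray projector $P$ removes the pressure and recasts the $z$-equation as $(\lambda_{0}I-A)z=PF$ in $V^{0}_{n}(\Omega)$. By Lemma \ref{estres}, $\lambda_{0}$ lies in the resolvent set of $A$, so this equation has a unique solution $z\in\mathcal{D}(A)=V^{2}(\Omega)\cap V^{1}_{0}(\Omega)$ with $\|z\|_{V^{2}(\Omega)}\leqslant C\|PF\|_{V^{0}_{n}(\Omega)}\leqslant C\|F\|_{L^{2}(\Omega)}$; the $H^{2}$ bound here is exactly the elliptic regularity statement \cite[Theorem 3.2.1.3]{gris} that is built into the definition of $\mathcal{D}(A)$.

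I would then recover the pressure. The $L^{2}$ vector field $\lambda_{0}z-\nu\Delta z-\beta z+(v_{s}\cdot\nabla)z+(z\cdot\nabla)v_{s}-F$ is annihilated by $P$, hence it is a gradient $-\nabla q$ by de Rham's theorem (valid on the bounded Lipschitz domain $\Omega$), with $q\in H^{1}(\Omega)/\mathbb{R}$ and $\|q\|_{H^{1}(\Omega)/\mathbb{R}}\leqslant C(\|z\|_{V^{2}(\Omega)}+\|F\|_{L^{2}(\Omega)})$. Setting $y=z+w$ then produces a solution of \eqref{liftopt}, and combining the three estimates above yields \eqref{estyq}. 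Uniqueness follows since the difference of two solutions solves the homogeneous problem with $u=0$, that is $(\lambda_{0}I-A)(y_{1}-y_{2})=0$, which forces $y_{1}=y_{2}$ because $\lambda_{0}$ is in the resolvent set of $A$, and then $\nabla(q_{1}-q_{2})=0$, i.e. $q_{1}=q_{2}$ in $H^{1}(\Omega)/\mathbb{R}$.

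I expect the chief technical point to be the full $H^{2}$ regularity in the non-smooth domain $\Omega$. On a general Lipschitz domain an elliptic problem can lose the $H^{2}$ estimate near corners; the saving feature is that the rectangle is convex, so the scalar regularity result \cite[Theorem 3.2.1.3]{gris} applies and transfers, through the projected formulation, to the vector Oseen problem, and it is the same convexity that lets the divergence-correction deliver an $H^{2}$ (rather than merely $H^{1}$) lift. Some care is also needed to check that the first-order terms $(v_{s}\cdot\nabla)y$ and $(y\cdot\nabla)v_{s}$, which are lower-order perturbations folded into $A$, do not spoil the regularity; this is harmless since $v_{s}$ is smooth so these terms map $V^{2}(\Omega)$ boundedly into $L^{2}(\Omega)$, but it must be tracked consistently through the estimates.
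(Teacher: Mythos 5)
Your overall architecture (lift the boundary data to a divergence-free $H^{2}$ field, solve the resulting homogeneous Dirichlet Oseen problem via the resolvent of $A$ at $\lambda_{0}$, recover the pressure by de Rham) is sound and close in spirit to the paper, which also proceeds by a decomposition: there, $(y_{1},q_{1})$ solves the Stokes system \eqref{estyq1} with boundary datum $u$ (the $V^{2}\times H^{1}/\mathbb{R}$ regularity on the rectangle being taken from \cite{Osborn}), and $(y_{2},q_{2})$ solves the homogeneous-Dirichlet Oseen problem with right-hand side $-(v_{s}\cdot\nabla)y_{1}-(y_{1}\cdot\nabla)v_{s}$. The part of your argument that does not hold as written is the construction of the lift. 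You extend $u$ to $\tilde{w}\in H^{2}(\Omega)$ and then invoke a Bogovskii-type right inverse of the divergence to solve $\mathrm{div}\,\psi=-\mathrm{div}\,\tilde{w}$ with $\psi\in H^{2}(\Omega)\cap H^{1}_{0}(\Omega)$ and $\|\psi\|_{H^{2}}\leqslant C\|\mathrm{div}\,\tilde{w}\|_{H^{1}}$. The higher-order Bogovskii estimate on a star-shaped domain is of the form $\|B g\|_{W^{m+1,q}}\leqslant C\|g\|_{W^{m,q}}$ only for $g\in W^{m,q}_{0}(\Omega)$ with zero mean (see e.g. \cite{galdi}, the standard statement of Bogovskii's theorem); for $m=1$ this requires $g$ to vanish on $\Gamma$. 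Your datum $g=-\mathrm{div}\,\tilde{w}$ is merely in $H^{1}(\Omega)$ with zero mean, and a generic trace extension gives no control on $\mathrm{div}\,\tilde{w}$ at the boundary, so the claimed $H^{2}$ bound for $\psi$ is not available. With only the $m=0$ estimate you get $\psi\in H^{1}_{0}$, which produces an $H^{1}$ lift and is insufficient to put the remainder $F$ in $L^{2}(\Omega)$.

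The gap is repairable, but every natural repair amounts to re-introducing a Stokes solve. One can either (a) choose the extension $\tilde{w}$ more carefully so that $\mathrm{div}\,\tilde{w}\in H^{1}_{0}(\Omega)$, which means prescribing the normal derivative of the normal component of $\tilde{w}$ on $\Gamma$ in terms of the tangential derivatives of $u$ — delicate on a polygon because of corner compatibility conditions for the $H^{2}$ trace theorem; or (b) obtain the divergence-free $H^{2}$ lift by solving the Stokes problem with inhomogeneous divergence (or, more directly, with boundary datum $u$) and invoking the $H^{2}$ regularity theory on a convex polygon. Option (b) is exactly what the paper does via \eqref{estyq1} and \cite{Osborn}, and it is the cleanest way to close your argument: once the lift is obtained this way, the remainder of your proof (resolvent solvability of $(\lambda_{0}I-A)z=PF$ using Lemma \ref{estres}, recovery of $q$ by de Rham, uniqueness from $\lambda_{0}$ being in the resolvent set) is correct and yields \eqref{estyq}.
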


	\begin{remark}
	The Lemma \ref{h2reg} is inspired from \cite[Lemma B.1.]{raye} where it is proved in the case of a $C^{2}$ domain.
\end{remark}

\begin{proof}[Proof of Lemma \ref{h2reg}]
	We write $(y,q)=(y_{1},q_{1})+(y_{2},q_{2}),$ such that $(y_{1},q_{1})$ satisfies
	\begin{equation}\label{estyq1}
\left\{	\begin{array}{ll}
	\lambda_{0}y_{1}-\nu\Delta y_{1}-\beta y_{1}+\nabla q_{1}=0\quad&\mbox{in}\quad\Omega,\\
	\mbox{div}(y_{1})=0\quad&\mbox{in}\quad\Omega,\\
	y_{1}=u\quad&\mbox{on}\quad\Gamma
	\end{array}\right.
	\end{equation}
and $(y_{2},q_{2})$ satisfies
	\begin{equation}\label{estyq2}
	\left\{ \begin{array}{ll}
	\lambda_0 {y}_{2} - \nu  \Delta {y}_{2}-\beta {y}_{2}+(({v}_s\cdot \nabla){y}_{2})+(({y}_{2}\cdot \nabla){v}_s)+\nabla q_{2}=-(({v}_s\cdot \nabla){y}_{1})-(({y}_{1}\cdot \nabla){v}_s)\quad& \mbox{in}\quad \Omega,
	\vspace{1.mm}\\
	\mbox{div}({y}_{2})=0 \quad &\mbox{in}\quad \Omega, 
	\vspace{1.mm}\\
	{y}_{2}=0 \quad &\mbox{on}\quad \Gamma.
	\end{array}\right.
	\end{equation}
	 As $u\in H^{3/2}(\Omega),$ the solution to \eqref{estyq1} satisfies $(y_{1},q_{1})\in V^{2}(\Omega)\times H^{1}(\Omega)/\mathbb{R}$ (see \cite{Osborn}) and the following inequality is true
	 \begin{equation}\label{estyq3}
	 \begin{array}{l}
	 	\|y_{1}\|_{V^{2}(\Omega)}+\|q_{1}\|_{H^{1}(\Omega)/\mathbb{R}}\leqslant C\|u\|_{H^{3/2}(\Gamma)},
	 \end{array}
	 \end{equation}
	 where $C>0$ is a constant. Using \eqref{estyq3} we observe that the right hand side of \eqref{estyq2}$_{1}$ is in $H^{1}(\Omega).$ Hence we get that $y_{2}\in V^{2}(\Omega)\cap V^{1}_{0}(\Omega).$ Then the corresponding pressure $q_{2}\in H^{1}(\Omega)/\mathbb{R}$ can be recovered using De Rham's theorem (see \cite[Section 1.4]{temam}). Using \eqref{estyq3} one also has the following inequality
	  \begin{equation}\label{estyq4}
	  \begin{array}{l}
	  \|y_{2}\|_{V^{2}(\Omega)}+\|q_{2}\|_{H^{1}(\Omega)/\mathbb{R}}\leqslant C\|u\|_{H^{3/2}(\Gamma)},
	  \end{array}
	  \end{equation}
	 for some positive constant $C.$
	 The inequalities \eqref{estyq3} and \eqref{estyq4} together yield \eqref{estyq}. 
	\end{proof}
Now for $u\in V^{3/2}(\Gamma),$ we define the Dirichlet lifting operators $D_{A}u=y$ and $D_{p}u=q,$ where $(y,q)$ is the solution of \eqref{liftopt} with Dirichlet data $u.$
\begin{lem}\label{l2-2} (i) The operator $D_A$ can be extended as a bounded linear map from $V^{0}(\Gamma)$ to $V^{1/2}(\Omega).$ Moreover $D_{A}\in \mathcal{L}({V}^{s}(\Gamma),{V}^{s+1/2}(\Omega))$ for all $0\leqslant s \leqslant 3/2.$\\ 
	(ii)The operator $D_{A}^{*},$ the adjoint of $D_A$ computed as a bounded operator from $V^{0}(\Gamma)$ to $V^{0}(\Omega)$ is a bounded linear operator from $V^{0}(\Omega)$ to $V^{0}(\Gamma)$ and is given as follows  
	\begin{equation}\label{representation}
	\begin{array}{l}
		\displaystyle
	 D^{*}_{A}{g} = -\nu \frac{\partial {z}}{\partial {n}}+ \pi {n} -{\frac{1}{|\Gamma|}}\left(\int\limits_{\Gamma}\pi \right){n},
	 \end{array}
	 \end{equation}
	where $(z,\pi)$ is the solution of
	\begin{equation}\label{defda*}
	\left\{	\begin{array}{ll}
	\lambda_{0} {z} - \nu  \Delta {z}-\beta {z}-({v}_s\cdot \nabla){z}+(\nabla {v}_s)^{T}{z}+\nabla \pi=g\quad&\mbox{in}\quad\Omega,
	\vspace{1.mm}\\
	\textup{div}{z}=0 \quad& \mathrm{in}\quad \Omega, 
	\vspace{1.mm}\\
	{z}=0 \quad& \mathrm{on}\quad \Gamma,
	\end{array}\right.
	\end{equation}
	Here $|\Gamma| $ is the one dimensional Lebesgue measure of $\Gamma$. Moreover $D_{A}^{*}\in\mathcal{L}({V}^{0}(\Omega),{V}^{1/2}(\Gamma)).$\\
	 (iii) The operator $D_{A}^{*}$ can be extended as a bounded linear operator from $H^{-\frac{1}{2}+\kappa}(\Omega)$ to $V^{\kappa}(\Gamma),$ for all $0<\kappa<\frac{1}{2},$ $i.e.$
	 \begin{equation}\label{regD*}
	 \begin{array}{{l}}
	 D_{A}^{*}\in\mathcal{L}({H}^{-\frac{1}{2}+\kappa}(\Omega),{V}^{\kappa}(\Gamma))\quad\mbox{for all}\quad 0<\kappa<\frac{1}{2}.
	 \end{array}
	 \end{equation} 
	\end{lem}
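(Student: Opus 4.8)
The plan is to establish the three assertions in the order (ii), (i), (iii), since the adjoint estimate in (ii) is the engine that drives the transposition argument for (i), and (iii) is a regularity refinement of the very same computation. The starting point is a Green-type identity. Fix smooth data $u\in V^{3/2}(\Gamma)$ and $g\in V^0(\Omega)$, let $y=D_Au$ be the solution of \eqref{liftopt} (so $y\in V^2(\Omega)$ by Lemma \ref{h2reg}) and let $(z,\pi)$ solve the adjoint system \eqref{defda*} (so $(z,\pi)\in V^2(\Omega)\times H^1(\Omega)/\mathbb{R}$ by the $H^2$-regularity of the homogeneous adjoint Dirichlet problem, available from \cite[Theorem 3.2.1.3]{gris} and \cite{Osborn}). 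I would pair the adjoint equation with $y$ and the direct equation with $z$ and integrate by parts. Everything sits in $H^2/H^1$, so all integrations are legitimate: the zeroth-order terms ($\lambda_0,\beta$) cancel, the convection terms cancel by the adjoint structure of \eqref{defda*} (this is exactly why $A^{*}$ carries $-(v_s\cdot\nabla)$ and $(\nabla v_s)^T$, cf. \eqref{oprA}), and using $z=0$ on $\Gamma$ together with $\mathrm{div}\,y=\mathrm{div}\,z=0$ one is left only with boundary terms, giving
\[
\langle D_Au,g\rangle_{V^0(\Omega)}=\int_\Gamma u\cdot\Big(-\nu\frac{\partial z}{\partial n}+\pi n\Big)\,ds .
\]
Since $u\in V^0(\Gamma)$ satisfies $\int_\Gamma u\cdot n=0$, subtracting the $n$-parallel constant $\frac{1}{|\Gamma|}\big(\int_\Gamma\pi\big)n$ does not alter the pairing, which produces precisely the representation \eqref{representation} for $D_A^{*}g$.

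For the boundedness in (ii), I would feed $g\in V^0(\Omega)$ into the $H^2$-estimate $\|z\|_{V^2(\Omega)}+\|\pi\|_{H^1(\Omega)/\mathbb{R}}\leqslant C\|g\|_{V^0(\Omega)}$; the trace theorem then places $\partial z/\partial n$ (the trace of $\nabla z\in H^1(\Omega)$) and $\pi|_\Gamma$ in $H^{1/2}(\Gamma)$, whence $\|D_A^{*}g\|_{H^{1/2}(\Gamma)}\leqslant C\|g\|_{V^0(\Omega)}$. To land in $V^{1/2}(\Gamma)$ rather than merely $H^{1/2}(\Gamma)$ I must check $\int_\Gamma D_A^{*}g\cdot n=0$, which reduces to $\int_\Gamma\frac{\partial z}{\partial n}\cdot n=0$; this holds pointwise because $z=0$ on $\Gamma$ forces $\nabla z=(\partial z/\partial n)\,n^{T}$ on $\Gamma$, so that $\mathrm{div}\,z=\frac{\partial z}{\partial n}\cdot n=0$ there. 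This yields $D_A^{*}\in\mathcal{L}(V^0(\Omega),V^{1/2}(\Gamma))$, and the identity above (read for general smooth $u$) simultaneously shows $D_A\in\mathcal{L}(V^0(\Gamma),V^0(\Omega))$ and identifies $D_A^{*}$ as its genuine adjoint.

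Part (i) I would then obtain by transposition followed by interpolation, which conveniently sidesteps the delicate borderline endpoint. Transposing (ii): for $u\in V^{-1/2}(\Gamma)=(V^{1/2}(\Gamma))'$, define $D_Au\in V^0(\Omega)$ through $\langle D_Au,g\rangle_{V^0(\Omega)}:=\langle u,D_A^{*}g\rangle_{V^{-1/2}(\Gamma),V^{1/2}(\Gamma)}$ for all $g\in V^0(\Omega)$; the estimate from (ii) makes the right-hand side a bounded functional of $g$, giving $\|D_Au\|_{V^0(\Omega)}\leqslant C\|u\|_{V^{-1/2}(\Gamma)}$, i.e. $D_A\in\mathcal{L}(V^{-1/2}(\Gamma),V^0(\Omega))$, and the Green identity shows this extension agrees with the original $D_A$ on smooth data. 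Interpolating this with $D_A\in\mathcal{L}(V^{3/2}(\Gamma),V^2(\Omega))$ from Lemma \ref{h2reg} — both endpoints gaining exactly half a derivative, which is what makes the gain survive interpolation — along the (standard) divergence-free Sobolev scales gives $D_A\in\mathcal{L}(V^{s}(\Gamma),V^{s+1/2}(\Omega))$ for $s\in[-1/2,3/2]$, in particular for $0\leqslant s\leqslant 3/2$; the case $s=0$ is the claimed $V^0(\Gamma)\to V^{1/2}(\Omega)$ bound.

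Finally, (iii) is the same representation formula pushed to rougher data: for $g\in H^{-1/2+\kappa}(\Omega)$ the adjoint solution satisfies the shift $z\in H^{3/2+\kappa}(\Omega)$, $\pi\in H^{1/2+\kappa}(\Omega)$, so $\partial z/\partial n$ and $\pi|_\Gamma$ lie in $H^{\kappa}(\Gamma)$ (the trace being admissible precisely because $1/2+\kappa>1/2$, forcing $\kappa>0$), giving $D_A^{*}\in\mathcal{L}(H^{-1/2+\kappa}(\Omega),V^{\kappa}(\Gamma))$; alternatively the whole range $0<\kappa<1/2$ follows by interpolating (ii) (the formal endpoint $\kappa=1/2$) with the borderline $\kappa=0$ endpoint. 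The step I expect to be the real obstacle — and the reason these statements cannot simply be quoted from \cite{raye}, where the domain is smooth — is the elliptic regularity on the rectangle: the corners break the classical smooth-domain shift theorems, so both the $H^2$-estimate used in (ii) and the intermediate $H^{3/2+\kappa}$-estimate used in (iii) for the Oseen/Stokes Dirichlet problem must be extracted from Grisvard's corner-domain theory \cite{gris} (it is the convexity of the rectangle that still buys the $H^2$ bound). It is exactly this corner-limited regularity, combined with the failure of the trace embedding $H^{1/2}(\Omega)\not\hookrightarrow L^2(\Gamma)$ at the endpoint, that forces the open restriction $0<\kappa<\tfrac12$.
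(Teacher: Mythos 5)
Your route coincides in substance with the paper's: part (ii) via the Green identity, exactly as in \cite[Lemma B.4]{raye} (which the paper merely cites and leaves to the reader); part (i) by transposition plus interpolation against the $V^{3/2}(\Gamma)\to V^{2}(\Omega)$ bound of Lemma \ref{h2reg}; and part (iii) by pushing the representation \eqref{representation} to rougher data through an elliptic shift. Two differences are worth recording. For (i), the paper extends $D_A$ directly to $\mathcal{L}(V^{0}(\Gamma),V^{1/2}(\Omega))$ (citing \cite[Theorem B.1]{raye}) and interpolates over $s\in[0,3/2]$, whereas you transpose down to $\mathcal{L}(V^{-1/2}(\Gamma),V^{0}(\Omega))$ and interpolate over $[-1/2,3/2]$; this is workable but obliges you to define and interpolate the negative-order divergence-free trace scale, an extra (standard but not free) step the paper avoids.

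The one genuine gap is in (iii). Its engine is the shift $g\in H^{-1/2+\kappa}(\Omega)\Rightarrow (z,\pi)\in H^{3/2+\kappa}(\Omega)\times H^{1/2+\kappa}(\Omega)$, which you assert and attribute to Grisvard; but Grisvard's convex-domain theory only supplies the $L^{2}\to H^{2}$ endpoint. The paper obtains the intermediate regularity by interpolating that endpoint with a second one, $\mathcal{M}_{g\rightarrow(z,\pi)}\in\mathcal{L}(H^{-1}(\Omega),V^{1}_{0}(\Omega)\times L^{2}(\Omega)/\mathbb{R})$, derived by rewriting \eqref{defda*} as a Stokes problem with an $H^{-1}$ right-hand side and invoking \cite[Theorem IV.5.2]{boy}; you need to supply that endpoint for the shift to hold on the rectangle. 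Your proposed ``alternative'' --- interpolating $D_A^{*}$ itself between the $\kappa=1/2$ statement of (ii) and a $\kappa=0$ endpoint --- does not work: the endpoint $D_A^{*}\in\mathcal{L}(H^{-1/2}(\Omega),V^{0}(\Gamma))$ is precisely what fails (as you yourself note when explaining why the range is open at $\kappa=0$), so the interpolation must be performed on the solution map, not on $D_A^{*}$ with a missing endpoint. Finally, the paper's part (iii) deliberately drops the divergence-free constraint on $g$ (see the remark following the lemma); your argument accommodates this since solving \eqref{defda*} never uses $\mathrm{div}\,g=0$, but it should be said explicitly.
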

	\begin{remark}
	The first two parts of the Lemma \ref{l2-2} are inspired from \cite[Lemma B.4.]{raye} where it is proved in the case of a $C^{2}$ domain.
	\end{remark}
\begin{proof}[Proof of Lemma \ref{l2-2}]
	(i) In Lemma \ref{h2reg} we have proved that $D_{A}$ is a bounded operator from $V^{3/2}(\Gamma)$ into $V^{2}(\Omega).$ Following \cite[Theorem B.1]{raye} one obtains that the operator $D_{A}$ can be extended as a bounded linear map from $V^{0}(\Gamma)$ into $V^{1/2}(\Omega)$ (in the sense of variational formulation). Hence one can use interpolation to prove that $D_{A}$ is bounded from $V^{s}(\Gamma)$ to $V^{s+1/2}(\Omega),$ for all $0\leqslant s\leqslant 3/2.$\\
	(ii) The second part can be done following the proof of \cite[Lemma B.4]{raye}. It is therefore left to the reader.\\
	(iii) For the final part, in view of \cite[Appendix B, Lemma B.1.]{raye} one first observes that the map $\mathcal{M}_{g\rightarrow (z,\pi)},$ mapping $g$ to $(z,\pi)$ (where $g,$ $z$ and $\pi$ are as in \eqref{defda*}) satisfies the following
	\begin{equation}\label{interreg}
	\begin{array}{l}
	\mathcal{M}_{g\rightarrow (z,\pi)}\in \mathcal{L}(L^{2}(\Omega),(V^{2}(\Omega)\cap V^{1}_{0}(\Omega))\times H^{1}(\Omega)/\mathbb{R}).
	\end{array}
	\end{equation}
	Now following \cite[Appendix B, Theorem B.1.]{raye} one can use method of transposition to define weak type solution of the problem \eqref{defda*} when $g\in (H^{2}(\Omega)\cap H^{1}_{0}(\Omega))',$ where $(H^{2}(\Omega)\cap H^{1}_{0}(\Omega))'$ is the dual of the space $H^{2}(\Omega)\cap H^{1}_{0}(\Omega)$ provided that $L^{2}(\Omega)$ is identified with its dual. In particular one has the following
	\begin{equation}\label{interreg1}
	\begin{array}{l}
    \mathcal{M}_{g\rightarrow (z,\pi)}\in \mathcal{L}((H^{2}(\Omega)\cap H^{1}_{0}(\Omega))',V^{0}(\Omega)\times (H^{1}(\Omega)/\mathbb{R})').
	\end{array}
	\end{equation}
	Now let us assume $g\in H^{-1}(\Omega),$ where $H^{-1}(\Omega)$ denotes the dual of $H^{1}_{0}(\Omega)$ with $L^{2}(\Omega)$ as the pivot space. Using \eqref{interreg1} and the fact that $H^{-1}(\Omega)\subset (H^{2}(\Omega)\cap H^{1}_{0}(\Omega))'$ (since $(H^{2}(\Omega)\cap H^{1}_{0}(\Omega))$ is dense in $H^{1}_{0}(\Omega)$) one can write \eqref{defda*} as follows
	\begin{equation}\label{defda**}
	\left\{	\begin{array}{ll}
	- \nu  \Delta {z}+\nabla \pi=g^{*}\quad&\mbox{in}\quad\Omega,
	\vspace{1.mm}\\
	\textup{div}{z}=0 \quad& \mathrm{in}\quad \Omega, 
	\vspace{1.mm}\\
	{z}=0 \quad& \mathrm{on}\quad \Gamma,
	\end{array}\right.
	\end{equation}
	where 
	$$g^{*}=g-\lambda_{0} {z}+\beta {z}+({v}_s\cdot \nabla){z}-(\nabla {v}_s)^{T}{z}\in H^{-1}(\Omega).$$
    Now \cite[Theorem IV.5.2]{boy} furnishes the following regularity
    \begin{equation}\label{penulest}
    \begin{array}{l}
    \mathcal{M}_{g\rightarrow (z,\pi)}\in \mathcal{L}(H^{-1}(\Omega),V^{1}_{0}(\Omega)\times L^{2}(\Omega)/\mathbb{R}).
    \end{array}
    \end{equation}
    Now from \eqref{interreg}, \eqref{penulest} and using the interpolation result  \cite[Theorem 5.1.]{liomag} one has
    \begin{equation}\label{penulest*}
    \begin{array}{l}
    \mathcal{M}_{g\rightarrow (z,\pi)}\in \mathcal{L}(H^{-\frac{1}{2}+\kappa}(\Omega),(V^{\frac{3}{2}+\kappa}(\Omega)\cap V^{1}_{0}(\Omega))\times H^{\frac{1}{2}+\kappa}(\Omega)/\mathbb{R}),
    \end{array}
    \end{equation}
    for $-\frac{1}{2}\leqslant\kappa\leqslant\frac{1}{2}.$\\
    Finally the definition \eqref{representation} of $D^{*}_{A}$ and \eqref{penulest*} in particular provide that
    $$D_{A}^{*}\in\mathcal{L}({H}^{-\frac{1}{2}+\kappa}(\Omega),{V}^{\kappa}(\Gamma)),\,\,\mbox{for all}\,\,0<\kappa<\frac{1}{2}.$$
    Hence we are done with the proof of Lemma \ref{l2-2}.
   	\end{proof}
   	\begin{remark}
   		In part $(ii)$ of Lemma \ref{l2-2}, the operator $D_{A}^{*}$ is defined on the space of divergence free functions but in part $(iii)$ we extended this definition by removing the divergence free constraint on the elements of the domain of $D^{*}_{A}.$ This is possible since it is not necessary to have a divergence free function $g$ in order to solve \eqref{defda*}.
   	\end{remark}
In order to localize the control of the velocity on $\Gamma_{c}$ (defined in \eqref{dgc}), we introduce the operator $M,$ which is defined as follows
\begin{equation}\label{docn}
\begin{array}{l}
\displaystyle
M{g}(x)=m(x){g}(x)- \frac {m}{\displaystyle\int\limits_{\Gamma}m}\left(\int\limits_{\Gamma}m{g}\cdot {n} \right){n}(x)\quad\mbox{for all}\quad x\in\Gamma.
\end{array}
\end{equation}
In the expression \eqref{docn} the weight function $m \in C^{\infty} (\Gamma)$  takes values in $[0,1]$ and is supported in $\Gamma_{c}\subset \Gamma_{in}.$ Moreover, $m$ equals 1 in some open connected component 
\begin{equation}\label{gammac+}
\begin{array}{l}
\Gamma_{c}^{+}\Subset \Gamma_{c}.
\end{array}
\end{equation} 
So the operator $M$ localizes the support of the control on $\Gamma_{in}$ and also guarantees that $Mg\in V^{0}(\Gamma)$ for any $g\in L^{2}(\Gamma).$ 
\begin{lem}\label{l2-3} \cite[Lemma 2.3]{ray2f} The operator $M \in \mathcal{L}({V}^{0}(\Gamma))$ (defined in \eqref{docn}) is symmetric. 
\end{lem}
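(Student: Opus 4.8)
The plan is to work directly from the definition \eqref{docn}, using that $V^{0}(\Gamma)$ is a closed subspace of $L^{2}(\Gamma;\mathbb{R}^{2})$ carrying the ambient inner product $\langle a,b\rangle_{V^{0}(\Gamma)}=\int_{\Gamma}a\cdot b\,dx$ (since by convention $V^{0}(\Gamma)$ is normed by the $H^{0}=L^{2}$ norm). Symmetry of $M$ then reduces to the scalar identity $\int_{\Gamma}(Mg)\cdot h\,dx=\int_{\Gamma}g\cdot(Mh)\,dx$ for all $g,h\in V^{0}(\Gamma)$. As a preliminary step I would record that $M$ indeed maps $V^{0}(\Gamma)$ into itself: pairing $Mg$ with $n$, integrating, and using $n\cdot n=1$ gives $\int_{\Gamma}(Mg)\cdot n=\int_{\Gamma}m\,(g\cdot n)-\frac{1}{\int_{\Gamma}m}\big(\int_{\Gamma}m\,g\cdot n\big)\int_{\Gamma}m=0$, so the flux-free constraint is preserved (this is the property announced right after \eqref{docn}).

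For the symmetry itself I would simply expand the pairing. Dotting \eqref{docn} with $h$ and integrating, and noting that $\int_{\Gamma}m$ is a constant scalar factor, one obtains
\begin{equation}\nonumber
\langle Mg,h\rangle_{V^{0}(\Gamma)}=\int_{\Gamma}m\,(g\cdot h)\,dx-\frac{1}{\int_{\Gamma}m}\Big(\int_{\Gamma}m\,(g\cdot n)\,dx\Big)\Big(\int_{\Gamma}m\,(h\cdot n)\,dx\Big).
\end{equation}
The decisive observation is that both terms on the right-hand side are manifestly invariant under the interchange $g\leftrightarrow h$: the first because $g\cdot h=h\cdot g$ pointwise, and the second because it is a product of two scalars. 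Hence $\langle Mg,h\rangle_{V^{0}(\Gamma)}=\langle g,Mh\rangle_{V^{0}(\Gamma)}$, which is precisely the claim.

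There is no genuine obstacle here; the entire content is the bilinear computation above, and the only points needing care are the vector bookkeeping—keeping the scalar factor $\int_{\Gamma}m\,(g\cdot n)$ separate from the vector field $n$ before dotting with $h$—together with the elementary identity $n\cdot n=1$, which is used both to verify $Mg\in V^{0}(\Gamma)$ and to collapse the correction term. Conceptually, one may also read off the result structurally: writing $M_{m}$ for multiplication by $m$ (symmetric on $L^{2}(\Gamma;\mathbb{R}^{2})$ as $m$ is a real scalar), the formula \eqref{docn} reads $M=M_{m}-\frac{1}{\int_{\Gamma}m}\,(M_{m}n)\otimes(M_{m}n)$, exhibiting $M$ as a symmetric multiplication operator corrected by a symmetric rank-one operator; restricting the resulting symmetric bilinear form on $L^{2}(\Gamma;\mathbb{R}^{2})$ to $V^{0}(\Gamma)$ gives the assertion. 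I would nonetheless present the direct computation as the primary argument, as it is the shortest route.
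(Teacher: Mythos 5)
Your computation is correct: expanding $\langle Mg,h\rangle_{V^{0}(\Gamma)}=\int_{\Gamma}m\,(g\cdot h)-\frac{1}{\int_{\Gamma}m}\bigl(\int_{\Gamma}m\,g\cdot n\bigr)\bigl(\int_{\Gamma}m\,h\cdot n\bigr)$ exhibits a bilinear form manifestly symmetric in $g$ and $h$, and the preliminary check that $\int_{\Gamma}Mg\cdot n=0$ (using $n\cdot n=1$) matches the property announced in the paper right after \eqref{docn}. The paper itself gives no proof of this lemma, citing only \cite[Lemma 2.3]{ray2f}, and your direct expansion is exactly the expected argument; there is nothing to add.
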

Sometimes we might use the notation 
\begin{equation}\label{stten}
\begin{array}{l}
\mathbb{T}(v,p)=\nu(\nabla{v}+(\nabla v)^{T})-pI,
\end{array}
\end{equation}
to denote the Cauchy stress tensor corresponding to a vector field $v$ and a pressure $p.$\\
We now define the operator
\begin{equation}\label{DEFB}
\begin{array}{l}
B=(\lambda_{0}I-A)PD_{A}M\in \mathcal{L}({V}^{0}(\Gamma),(\mathcal{D}(A^{*}))'),
\end{array}
\end{equation}
where $(\mathcal{D}(A^{*}))'$ denotes the dual of the space $\mathcal{D}(A^{*})$ with $V^{0}_{n}(\Omega)$ as the pivot space.
\begin{prop}\label{p2-4} 
	$(i)$ The adjoint of the operator $B,$ computed for the duality structure $\langle \cdot,\cdot\rangle_{(\mathcal{D}(A^{*})',\mathcal{D}(A^{*}))},$ that we will denote by $B^{*}$ in the following, satisfies $B^{*}\in \mathcal{L} (\mathcal{D}(A^{*}),{V}^{0}(\Gamma))$ and for all $\Phi\in\mathcal{D}(A^{*}),$
		\begin{align}
		\label{exB*phi1}
		B^{*}\Phi&=M\left(-\nu \frac{\partial \Phi}{\partial {n}}+\left(\psi  -{\frac{1}{|\Gamma|}}\left(\int\limits_{\Gamma}\psi \right)\right){n}\right)\\
		\label{exB*phi2}
		&=-M\mathbb{T}\left(\Phi,\left(\psi  -{\frac{1}{|\Gamma|}}\left(\int\limits_{\Gamma}\psi \right)\right)\right)n,
		\end{align}
	where
	\begin{equation}\label{exB*phi3}
	\begin{array}{l}
	\nabla \psi =(I-P)[\nu \Delta \Phi +({v}_s \cdot \nabla)\Phi-(\nabla {v}_s)^{T}\Phi],
	\end{array}
	\end{equation}
	and $\mathbb{T}$ denotes the stress tensor as defined in \eqref{stten}.\\
	$(ii)$ There exists a positive constant $\omega>0$ such that the operator $B^{*}$ can be extended as a bounded linear map from $\mathcal{D}((\omega I-A^{*})^{\frac{3}{4}+\frac{\kappa}{2}})$ to $V^{\kappa}(\Gamma),$ for all $0<\kappa<\frac{1}{2}$ $i.e.$
	\begin{equation}\label{extnregB*}
	\begin{array}{l}
	B^{*}\in\mathcal{L}(\mathcal{D}((\omega I-A^{*})^{\frac{3}{4}+\frac{\kappa}{2}}),V^{\kappa}(\Gamma))\quad\mbox{for all}\quad 0<\kappa<\frac{1}{2}.
	\end{array}
	\end{equation} 
\end{prop}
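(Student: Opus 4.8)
The plan is to reduce both parts to the factored identity $B^{*}=MD_{A}^{*}(\lambda_{0}I-A^{*})$, obtained by taking adjoints term by term in the definition \eqref{DEFB}, and then to feed this into the representation \eqref{representation} of $D_{A}^{*}$ and into its regularity \eqref{regD*}.

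\textbf{Part (i).} Since $P$ is an orthogonal projection and $M$ is symmetric (Lemma \ref{l2-3}), for $u\in V^{0}(\Gamma)$ and $\Phi\in\mathcal{D}(A^{*})$ I would unwind the duality pairing as
$$\langle Bu,\Phi\rangle_{(\mathcal{D}(A^{*})',\mathcal{D}(A^{*}))}=\langle (\lambda_{0}I-A)PD_{A}Mu,\Phi\rangle_{(\mathcal{D}(A^{*})',\mathcal{D}(A^{*}))}=\langle PD_{A}Mu,(\lambda_{0}I-A^{*})\Phi\rangle_{V^{0}_{n}(\Omega)}.$$
Because $\Phi$ is divergence free and $A^{*}\Phi\in V^{0}_{n}(\Omega)$, the element $(\lambda_{0}I-A^{*})\Phi$ lies in $V^{0}_{n}(\Omega)$, so $P$ can be moved onto it and dropped; using the definition of $D_{A}^{*}$ as the adjoint of $D_{A}$ this equals $\langle Mu, D_{A}^{*}[(\lambda_{0}I-A^{*})\Phi]\rangle_{L^{2}(\Gamma)}$. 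Moving $M$ back by symmetry identifies $B^{*}\Phi=MD_{A}^{*}[(\lambda_{0}I-A^{*})\Phi]$.

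The crucial point is to evaluate $D_{A}^{*}[(\lambda_{0}I-A^{*})\Phi]$ through \eqref{representation}: I claim that $(z,\pi)=(\Phi,\psi)$, with $\psi$ given by \eqref{exB*phi3}, solves the defining system \eqref{defda*} with datum $g=(\lambda_{0}I-A^{*})\Phi$. This is a direct verification: subtracting the projected expression $A^{*}\Phi$ from its unprojected counterpart produces exactly the gradient term $\nabla\psi=(I-P)[\nu\Delta\Phi+(v_{s}\cdot\nabla)\Phi-(\nabla v_{s})^{T}\Phi]$, while $\Phi\in V^{1}_{0}(\Omega)$ supplies the boundary and divergence conditions; uniqueness for \eqref{defda*} then forces $(z,\pi)=(\Phi,\psi)$. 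Plugging into \eqref{representation} and using symmetry of $M$ yields \eqref{exB*phi1}. Formula \eqref{exB*phi2} follows from the boundary identity $\mathbb{T}(\Phi,p)n=\nu\frac{\partial\Phi}{\partial n}-pn$ on $\Gamma$, valid because $\Phi\equiv 0$ on $\Gamma$ annihilates all tangential derivatives and $\mbox{div}\,\Phi=0$ forces $(\nabla\Phi)^{T}n=0$ there. Finally $B^{*}\in\mathcal{L}(\mathcal{D}(A^{*}),V^{0}(\Gamma))$ is read off from the trace theorem applied to \eqref{exB*phi1} (with $\Phi\in V^{2}(\Omega)$ and $\psi\in H^{1}(\Omega)/\mathbb{R}$), the constraint $\int_{\Gamma}B^{*}\Phi\cdot n=0$ being built into $M$.

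\textbf{Part (ii).} I would start from the identity $B^{*}\Phi=MD_{A}^{*}[(\lambda_{0}I-A^{*})\Phi]$ on the dense subspace $\mathcal{D}(A^{*})\subset\mathcal{D}((\omega I-A^{*})^{3/4+\kappa/2})$ and estimate factor by factor. Since $m\in C^{\infty}(\Gamma)$, the operator $M$ is bounded on $V^{\kappa}(\Gamma)$, and \eqref{regD*} gives $\|D_{A}^{*}g\|_{V^{\kappa}(\Gamma)}\le C\|g\|_{H^{-1/2+\kappa}(\Omega)}$. Hence it suffices to prove the single operator bound
$$\|(\lambda_{0}I-A^{*})\Phi\|_{H^{-1/2+\kappa}(\Omega)}\le C\,\|\Phi\|_{\mathcal{D}((\omega I-A^{*})^{3/4+\kappa/2})},\qquad 0<\kappa<\tfrac{1}{2},$$
after which $B^{*}$ extends by density to a bounded map into $V^{\kappa}(\Gamma)$, giving \eqref{extnregB*}. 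To get this bound I would write $(\lambda_{0}I-A^{*})=(\omega I-A^{*})+(\lambda_{0}-\omega)I$, the zeroth-order term being harmless, and match the fractional power scale of $A^{*}$ with the Sobolev scale: since $2(\tfrac34+\tfrac\kappa2)=\tfrac32+\kappa$, one has $\mathcal{D}((\omega I-A^{*})^{3/4+\kappa/2})\hookrightarrow V^{3/2+\kappa}(\Omega)$, and applying the second-order operator $(\omega I-A^{*})$ lowers the Sobolev index by two, landing in $V^{-1/2+\kappa}(\Omega)\hookrightarrow H^{-1/2+\kappa}(\Omega)$.

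The main obstacle is precisely this last identification of $\mathcal{D}((\omega I-A^{*})^{\alpha})$ with Sobolev spaces $V^{2\alpha}(\Omega)$ (carrying the correct boundary conditions), at the fractional order $2\alpha=\tfrac32+\kappa$ on the Lipschitz rectangle $\Omega$. Such a characterization is not automatic in a non-smooth domain and must be obtained by interpolation, in the same spirit as the interpolation argument used for \eqref{regD*} in Lemma \ref{l2-2}(iii), relying on the elliptic regularity available for $\Omega$ from \cite{gris} and \cite{Osborn} together with the analyticity of the semigroup from Lemma \ref{t2-1}. Everything else — the algebra of the adjoints, the verification that $(\Phi,\psi)$ solves \eqref{defda*}, and the boundary identity for $\mathbb{T}$ — is elementary once this scale is in place.
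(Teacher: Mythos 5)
Your proposal follows essentially the same route as the paper: part (i) is proved by factoring $B^{*}\Phi=MD_{A}^{*}P(\lambda_{0}I-A^{*})\Phi$, identifying the solution of \eqref{defda*} with $(\Phi,\psi)$ and invoking the boundary identity $(\nabla\Phi)^{T}n=0$ for \eqref{exB*phi2}; part (ii) rests, exactly as in the paper, on the interpolation identity $\mathcal{D}((\omega I-A^{*})^{3/4+\kappa/2})=V^{3/2+\kappa}(\Omega)\cap V^{1}_{0}(\Omega)$ (which the paper takes from the standard characterization of domains of fractional powers) combined with $P(\lambda_{0}I-A^{*})\Phi\in H^{-1/2+\kappa}(\Omega)$ and Lemma \ref{l2-2}(iii). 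The approach and all key steps match; the obstacle you flag at the end is precisely the point the paper settles by citation.
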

\begin{proof}
	(i) From Lemma \ref{l2-2}, we know that 
	\begin{equation}\label{B*phi}
	\begin{array}{l}
	\displaystyle B^{*}\Phi=MD^{*}_{A}P(\lambda_{0}I-A^{*})\Phi=M\left(-\nu \frac{\partial \hat\Phi}{\partial {n}}+\left(\psi  -{\frac{1}{|\Gamma|}}\left(\int\limits_{\Gamma}\psi \right)\right){n}\right),
	\end{array}
	\end{equation}
	 where
	\begin{equation}\label{defda*1}
	\left\{	\begin{array}{ll}
	\lambda_{0} {\hat{\Phi}} - \nu  \Delta {\hat{\Phi}}-\beta {\hat{\Phi}}-(({v}_s\cdot \nabla){\hat{\Phi}})+(\nabla {v}_s)^{T}{\hat{\Phi}}+\nabla \psi=P(\lambda_{0}I-A^{*})\Phi\quad&\mbox{in}\quad\Omega,
	\vspace{1.mm}\\
	\textup{div}{\hat{\Phi}}=0 \quad& \mathrm{in}\quad \Omega, 
	\vspace{1.mm}\\
	{\hat{\Phi}}=0 \quad& \mathrm{on}\quad \Gamma.
	\end{array}\right.
	\end{equation}
	This gives $\hat{\Phi}=\Phi$ and the expression \eqref{exB*phi3}. Hence the representation \eqref{exB*phi1} directly follows from \eqref{B*phi}. Also \eqref{exB*phi2} follows from \eqref{exB*phi1} because $(\nabla\Phi)^{T}n=0$ on $\Gamma$ (this can be easily deduced from the fact that $\Phi$ on $\Gamma$ is zero and $\mbox{div}(\Phi)=0$ on $\Omega$).\\
	(ii) Recall from Lemma \ref{t2-1} that $(A^{*},\mathcal{D}(A^{*}))$ generates an analytic semigroup on $V^{0}_{n}(\Omega).$ Hence one can always choose a large enough positive constant $\omega$ from the resolvent set of $A,$ such that the spectrum of $(A^{*}-\omega I)$ lies in the open left half-plane. Now following the definition \cite[p. 329, Section 7.4, Eq. 7.4.3]{fattorini} one can define the operator $(\omega I-A^{*})^{\frac{3}{4}+\frac{\kappa}{2}}$ where $0<\kappa<\frac{1}{2}.$ Let us consider $\Phi\in \mathcal{D}((\omega I-A^{*})^{\frac{3}{4}+\frac{\kappa}{2}})$ where $0<\kappa<\frac{1}{2}.$ Since
	$$\mathcal{D}((\omega I-A^{*})^{\frac{3}{4}+\frac{\kappa}{2}})=[V^{2}(\Omega)\cap V^{1}_{0}(\Omega),V^{0}_{n}(\Omega)]_{\frac{1}{4}-\frac{\kappa}{2}}=V^{\frac{3}{2}+\kappa}(\Omega)\cap V^{1}_{0}(\Omega)$$
	(for details on the characterization of domains of fractional powers we refer to \cite{lionsinterpole}), one observes the following
	\begin{equation}\label{incregB*}
	\begin{array}{l}
	P(\lambda_{0}I-A^{*})\Phi\in H^{-\frac{1}{2}+\kappa}(\Omega).
	\end{array}
	\end{equation}
	Now one can use the expression of $B^{*}$ as given by \eqref{B*phi} and part $(iii)$ of the Lemma \ref{l2-2} to prove \eqref{extnregB*}.
	\end{proof}
Now following \cite{raye} the Oseen equations
\begin{equation}\label{2-5}
\left\{   \begin{array}{ll}
\displaystyle
\frac{\partial {y}}{\partial t}-\beta {y}-\nu \Delta {y}+ ({v}_s \cdot \nabla){y}+({y} \cdot \nabla)v_s +\nabla  q=0\quad& \mbox{in}\quad Q_{\infty},
\vspace{1.mm}\\
\displaystyle
\mbox{div}{y}=0\quad& \mbox{in}\quad Q_{\infty},
\vspace{1.mm}\\
\displaystyle
{y}=0\quad& \mbox {on} \quad (\Gamma_0\cup\Gamma_{out}) \times (0,\infty),
\vspace{1.mm}\\
\displaystyle
{y}=M{u} \quad &\mbox{on} \quad \ \Gamma_{in} \times (0,\infty),
\vspace{1.mm}\\
\displaystyle
{y}(x,0)={y}_0\quad & \mbox{on}\quad \Omega,
\end{array} \right.
\end{equation}
can be written in the following evolution equation form
\begin{equation}\label{2-6}
\left\{ \begin{array}{ll}
P{y}'=AP{y}+B{u}\quad& \mbox{in}\quad(0,\infty), 
\vspace{.1cm}\\
P{y}(0)={y}_{0},&\\[1.mm]
(I-P){y}=(I-P)D_{A}M{u} \quad& \mbox{in}\quad (0,\infty).
\end{array}\right.
\end{equation}
In the following section we discuss some spectral properties of the Oseen operator $A$ and then we define a suitable control space in order to construct a control function which stabilizes the Oseen equations.
\subsubsection{Spectral properties of $A$ and the stabilizability criterion}
Since the resolvent of $A$ is compact (see Lemma \ref{t2-1}), the spectrum $spec(A)$ of the operator $A$ is discrete. Moreover since $A$ is the generator of an analytic semi group (see Lemma \ref{t2-1}), $spec(A)$ is contained in a sector. Also the eigenvalues are of finite multiplicity and appear in conjugate pairs when they are not real.\\ 
We denote by $(\lambda_{k})_{k\in N}$ the eigenvalues of $A.$ Without loss of generality we can always assume that there is no eigenvalue of $A$ with zero real part by fixing a slightly larger $\beta,$ if necessary. So we choose $N_{u}\in\mathbb{N}$ such that
\begin{equation}\label{coev}
\begin{array}{l}
...\mbox{Re}\lambda_{N_{u}+1}<0< \mbox{Re}\lambda_{N_{u}}\leqslant ... \leqslant \mbox{Re}\lambda_{1}.
\end{array}
\end{equation}
 	Following \cite{raym}, we now choose the control space as follows
 	\begin{align}\label{U0}
 	 U_{0}=\mbox{vect}\oplus^{N_{u}}_{k=1}(\mbox{Re}B^{*}\mbox{ker}(A^{*}-\lambda_{k}I)\oplus \mbox{Im}B^{*}\mbox{ker}(A^{*}-\lambda_{k}I)).
 	\end{align}
 The choice \eqref{U0} of the control space plays an important role in proving a unique continuation property which implies the stabilizability of the pair $(A,B).$ Let us choose the functions $g_{j}$ in \eqref{findcon} such that
 \begin{align}\label{basisU0}
 \{{{g}_{j}}\suchthat 1\leqslant j \leqslant N_{c}\}\, \mbox{is an orthonormal basis of}\, {U}_{0}.
 \end{align}
 For later use we now prove an additional regularity result for the elements of the control space $U_{0}.$ The following regularity result is true only because the elements of $U_{0}$ are supported on a smooth subset of $\Gamma.$
 \begin{lem}\label{smgci}
 	The set $U_{0},$ defined in \eqref{U0}, is a subspace of $C^{\infty}(\Gamma).$
 \end{lem}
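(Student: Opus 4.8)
The plan is to reduce the claim to a smoothness statement for a single eigenfunction and then to exploit the fact that the operator $M$ appearing in the formula for $B^{*}$ localizes everything on the smooth portion $\Gamma_{c}$ of $\Gamma$, well away from the corners of the rectangle. Since each eigenspace $\ker(A^{*}-\lambda_{k}I)$ is finite dimensional (the resolvent of $A$ is compact, Lemma \ref{t2-1}) and $U_{0}$ is, by \eqref{U0}, the finite dimensional span of the real and imaginary parts of the vectors $B^{*}\Phi$ with $\Phi\in\ker(A^{*}-\lambda_{k}I)$, $1\leqslant k\leqslant N_{u}$, it suffices to prove that $B^{*}\Phi\in C^{\infty}(\Gamma)$ for every such eigenfunction $\Phi$; taking real and imaginary parts and finite linear combinations then keeps us inside $C^{\infty}(\Gamma)$.

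Next I would establish interior and up-to-the-boundary regularity of $\Phi$. The eigenvalue relation $A^{*}\Phi=\lambda_{k}\Phi$ together with \eqref{oprA} means that, after reintroducing the associated pressure $\psi$ characterized by \eqref{exB*phi3}, the pair $(\Phi,\psi)$ solves the stationary Oseen system
\begin{equation}\nonumber
-\nu\Delta\Phi+\nabla\psi = F,\qquad \mathrm{div}\,\Phi=0\ \ \mathrm{in}\ \Omega,\qquad \Phi=0\ \ \mathrm{on}\ \Gamma,
\end{equation}
with $F=(\beta-\lambda_{k})\Phi+(v_{s}\cdot\nabla)\Phi-(\nabla v_{s})^{T}\Phi$. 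Because $v_{s}$ is a polynomial the coefficients are smooth and the principal part is the elliptic Stokes operator. Starting from $\Phi\in V^{2}(\Omega)\cap V^{1}_{0}(\Omega)$ one has $F\in H^{1}(\Omega)$; applying the local, up-to-the-boundary Stokes regularity on any subdomain whose boundary portion is a relatively open smooth (flat) piece of $\Gamma$ compactly contained in $\Gamma_{in}$, and then bootstrapping on the order of regularity, yields that both $\Phi$ and $\psi$ are $C^{\infty}$ in a neighbourhood (relative to $\overline{\Omega}$) of $\overline{\mathrm{supp}(m)}\subset\Gamma_{c}$.

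The crucial point, and the only delicate one, is that this local boundary regularity is available near $\Gamma_{c}$ precisely because $\Gamma_{c}=\{0\}\times(L,1-L)$ (see \eqref{dgc}) is bounded away from the corners $(0,0)$ and $(0,1)$ of $\Omega$, where the elliptic regularity of the Stokes system would genuinely degrade; this is exactly why the construction insists that $m\in C^{\infty}(\Gamma)$ be supported in $\Gamma_{c}$. With $\Phi$ and $\psi$ smooth up to the boundary on a neighbourhood of $\mathrm{supp}(m)$, I read off the smoothness of $B^{*}\Phi$ from the explicit formula \eqref{exB*phi1}: the vector field
\begin{equation}\nonumber
-\nu\frac{\partial\Phi}{\partial n}+\Big(\psi-\tfrac{1}{|\Gamma|}\int_{\Gamma}\psi\Big)n
\end{equation}
is $C^{\infty}$ on the relatively open set $\Gamma_{c}$ (there the outward normal $n$ is the constant vector $(-1,0)$, and $\int_{\Gamma}\psi$ is a fixed scalar since $\psi$ has an $L^{1}(\Gamma)$ trace), and applying $M$ as in \eqref{docn} multiplies it by the smooth cutoff $m$ and subtracts a constant multiple of $m\,n$, both supported in $\Gamma_{c}$. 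Hence $B^{*}\Phi$ is smooth on $\Gamma_{c}$ and vanishes identically on $\Gamma\setminus\mathrm{supp}(m)$, so it extends by zero to an element of $C^{\infty}(\Gamma)$. This proves $U_{0}\subset C^{\infty}(\Gamma)$.
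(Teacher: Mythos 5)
Your proposal is correct and follows essentially the same route as the paper: reduce to the smoothness of a single eigenpair $(\phi,\psi)$ of \eqref{eivp1}, localize near $\Gamma_{c}$ away from the corners $(0,0)$ and $(0,1)$ (the paper does this concretely with a cutoff $\Theta$ and a smooth subdomain $\Omega_{\Gamma_{c}}$, then applies the Stokes regularity of \cite[Theorem IV.5.8]{boy} and bootstraps), and finally read off the smoothness of $B^{*}\phi$ from \eqref{exB*phi1} together with the localization by $M$ on $\Gamma_{c}$. No gaps worth noting.
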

 \begin{proof}
 	The function $m$ is supported on $\Gamma_{c},$ which is $C^{\infty}$. In view of the representation \eqref{exB*phi1} of the operator $B^{*}$, we observe that to prove Lemma \ref{smgci} it is enough to show that for each $1\leqslant k\leqslant N_{u},$ any solution $(\phi,\psi)$ to the system \eqref{eivp1} is $C^{\infty}$ in some open set $\Omega_{\Gamma_{c}}$ ($\subset\Omega$) such that $\partial\Omega_{\Gamma_{c}}$ contains $\Gamma_{c}.$ Let us consider $k\in\{1,...,N_{u}\}$ and $(\phi,\psi)$ solves the following
 	\begin{equation}\label{eivp1}
 	\left\{ \begin{array}{ll}
 	\lambda_{k}\phi-\nu\Delta\phi-\beta\phi-((v_{s}\cdot\nabla)\phi)+(\nabla v_{s})^{T}\phi+\nabla\psi=0\quad&\mbox{in}\quad\Omega,\\[1.mm]
 	\mbox{div}\phi=0\quad&\mbox{in}\quad\Omega,\\[1.mm]
 	\phi=0\quad&\mbox{on}\quad\Gamma.
 	\end{array}\right.
 	\end{equation}
 	 We thus apply the elliptic regularity result \cite[Theorem 3.2.1.3]{gris} to show that 
 	\begin{equation}\label{phipsi}
 	\begin{array}{l}
 	\phi\in\mathcal{D}({A^{*}})= V^{2}(\Omega)\cap V^{1}_{0}(\Omega)\quad\mbox{and}\quad \psi\in H^{1}(\Omega).
 	\end{array}
 	\end{equation}
 	We will work in a neighborhood of $\Gamma_{c}$ in order to avoid the singularities due to the presence of the corners $(0,0)$ and $(0,1).$ First consider a neighborhood $N^{b}_{\Gamma_{c}}$ of $\Gamma_{c}$ such that neither of the points $(0,0)$ and $(0,1)$ belong to $N^{b}_{\Gamma_{c}}.$ Now we consider an open set $\Omega_{\Gamma_{c}}$ such that $\Omega_{\Gamma_{c}}\subset {\Omega},$ $\partial\Omega_{\Gamma_{c}}$ (the boundary of $\Omega_{\Gamma_{c}}$) is $C^{\infty}$ and $\partial\Omega_{\Gamma_{c}}\cap \Gamma=N^{b}_{\Gamma_{c}}.$ Let $\Theta\in C^{\infty}(\bar{\Omega}_{\Gamma_{c}})$ be such that $\Theta=1$ on a subset of $\bar\Omega_{\Gamma_{c}}$ containing $\Gamma_{c}$ and $\Theta=0$ on $\partial\Omega_{\Gamma_{c}}\setminus N^{b}_{\Gamma_{c}}.$ One can check that the function $(\Theta\phi,\Theta\psi)$ satisfies the following 
 	\begin{equation}\label{eqpro}
 	\begin{array}{l}
 	- \nu  \Delta ({\Theta{\phi}})+\nabla (\Theta\psi)=F(\Theta,\phi,\psi)\quad\mbox{in}\quad \Omega_{\Gamma_{c}},
 	\end{array}
 	\end{equation}
 	where $$F(\Theta,\phi,\psi)=-\nu\Delta\Theta\phi-2\nu\nabla\Theta\nabla\phi-\phi(v_{s}\cdot\nabla)\Theta+\psi\nabla\Theta,$$
 	and also $\Theta\phi=0$ on $\partial\Omega_{\Gamma_{c}},$ which implies $\displaystyle \int_{\Omega_{\Gamma_{c}}}\textup{div}{({\Theta{\phi}})}=\int_{\partial\Omega_{\Gamma_{c}}}{({\Theta{\phi}})}\cdot\,n=0.$
 	Using \eqref{phipsi} one verifies that $$F(\Theta,\phi,\psi)\in H^{1}(\Omega_{\Gamma_{c}})\quad\mbox{and}\quad\mbox{div}(\Theta\phi)=\phi\cdot\nabla\Theta\in H^{2}(\Omega_{\Gamma_{c}}).$$ Now we apply \cite[Theorem {IV.5.8}]{boy} to obtain,
 	$(\Theta\phi,\Theta\psi)\in H^{3}(\Omega_{\Gamma_{c}})\times H^{2}(\Omega_{\Gamma_{c}}).$
 	We can use a bootstrap argument to conclude that, $(\Theta\phi,\Theta\psi)\in C^{\infty}(\Omega_{\Gamma_{c}}).$ Hence we finally have ${g_{j}}\in C^{\infty}(\Gamma),$ for all $1\leqslant j\leqslant N_{c} .$
 \end{proof}
 We are looking for a control $u$ taking values in $U_{0}.$ We write
 \begin{equation}\label{fdc}
 \begin{array}{l}
 u(x,t)=\sum\limits_{j=1}^{N_{c}}{w_{j}}(t){g_{j}}(x),
 \end{array}
 \end{equation}
 where $w_{c}=(w_{1},...,w_{N_{c}})\in L^{2}(0,\infty;\mathbb{R}^{N_{c}})$ is the control variable. Again in view of \cite{raym} we define a new control operator $\mathcal{B}\in \mathcal{L}(\mathbb{R}^{N_{c}},(\mathcal{D}(A^{*}))')$ as
 \begin{equation}\label{decB}
 \begin{array}{l}
 \mathcal{B}w_{c}=\sum\limits_{j=1}^{N_{c}}{w_{j}}B{g_{j}}=\sum\limits_{j=1}^{N_{c}}{w_{j}}(\lambda_{0}I-A)PD_{A}{g_{j}}.
 \end{array}
 \end{equation}
 Observe that $\mathcal{B}$ is defined by restricting the action of the operator $B$ to $U_{0}.$\\ 
 Let us consider the controlled system
 \begin{equation}\label{consans} 
 \begin{array}{l}
 Py'=APy+Bu\,\,\mbox{in}\,\,(0,\infty),\quad Py(0)=y_{0},
 \end{array}
 \end{equation}
 which we obtain from \eqref{2-6}$_{1}$-\eqref{2-6}$_{2}.$ With the definition \eqref{decB} and a control of the form \eqref{fdc}, the system \eqref{consans} takes the form
 $$Py'=APy+\mathcal{B}w_{c}\,\,\mbox{in}\,\,(0,\infty),\quad Py(0)=y_{0}.$$
\begin{thm}\label{thstab}
	Assume that the spectrum of $A$ obeys the condition \eqref{coev}, we choose $\{{g_{j}}\suchthat 1\leqslant j\leqslant N_{c}\}$ as \eqref{basisU0} and the operator $\mathcal{B}$ is as defined in \eqref{decB}. Then the pair $(A,\mathcal{B})$ is stabilizable in $V^{0}_{n}(\Omega).$ 
\end{thm}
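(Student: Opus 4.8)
The plan is to prove stabilizability of the pair $(A,\mathcal{B})$ via the classical Hautus-type test for analytic semigroups: since $A$ generates an analytic semigroup with compact resolvent (Lemma \ref{t2-1}), $\mathrm{spec}(A)$ is discrete with only finitely many eigenvalues $\lambda_1,\dots,\lambda_{N_u}$ in the closed right half-plane (by \eqref{coev}), and each has finite multiplicity. Stabilizability of $(A,\mathcal{B})$ in $V^0_n(\Omega)$ is then equivalent to the following unique continuation / rank condition on the unstable subspace: for every eigenvalue $\lambda_k$ with $\mathrm{Re}\,\lambda_k \geqslant 0$ and every $\Phi$ in $\ker(A^*-\overline{\lambda_k}I)$, the implication
\begin{equation}\nonumber
\begin{array}{l}
\mathcal{B}^{*}\Phi = 0 \quad \Longrightarrow \quad \Phi = 0
\end{array}
\end{equation}
holds. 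First I would reduce the stabilization problem to this finite-dimensional condition by projecting onto the (finite-dimensional, $A$-invariant) spectral subspace associated with the unstable eigenvalues, using the Riesz spectral projection; on the stable part no control is needed since the semigroup already decays.

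Next I would compute $\mathcal{B}^{*}\Phi$ explicitly. By the definition \eqref{decB} of $\mathcal{B}$ as the restriction of $B$ to $U_0$, and the representation \eqref{exB*phi1}--\eqref{exB*phi3} of $B^{*}$ from Proposition \ref{p2-4}, we have for $\Phi \in \ker(A^*-\overline{\lambda_k}I)$ that $\mathcal{B}^{*}\Phi$ is the vector of inner products $\big(\langle B^{*}\Phi, g_j\rangle_{V^0(\Gamma)}\big)_{1\leqslant j\leqslant N_c}$, where
\begin{equation}\nonumber
\begin{array}{l}
B^{*}\Phi = M\left(-\nu\dfrac{\partial\Phi}{\partial n} + \left(\psi - \dfrac{1}{|\Gamma|}\int_\Gamma\psi\right)n\right),
\end{array}
\end{equation}
with $\psi$ given by \eqref{exB*phi3}. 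The crucial point is the choice \eqref{U0} of the control space $U_0$: it is built precisely as the span of the real and imaginary parts of $B^{*}\ker(A^*-\lambda_k I)$ over all unstable $k$, and $\{g_j\}$ is an orthonormal basis of $U_0$ by \eqref{basisU0}. Hence if $\mathcal{B}^{*}\Phi = 0$, i.e. $B^{*}\Phi$ is orthogonal in $V^0(\Gamma)$ to every $g_j$, then $B^{*}\Phi$ is orthogonal to all of $U_0$; but $B^{*}\Phi$ itself lies in $U_0$ by construction, forcing $\|B^{*}\Phi\|_{V^0(\Gamma)}^2 = 0$, so $B^{*}\Phi = 0$.

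It therefore remains to show that $B^{*}\Phi = 0$ implies $\Phi = 0$, which is the genuine unique continuation step and the main obstacle. Here $\Phi$ (together with its associated pressure $\psi$) solves the adjoint eigenvalue problem \eqref{eivp1} with $\lambda_k$, so $\Phi = 0$ on $\Gamma$ already; the condition $B^{*}\Phi = 0$ means that $M\,\mathbb{T}\big(\Phi, \psi - \tfrac{1}{|\Gamma|}\int_\Gamma\psi\big)n$ vanishes, and since $m \equiv 1$ on the open set $\Gamma_c^{+}$ (see \eqref{gammac+}), this yields that both $\Phi$ and the Cauchy stress $\mathbb{T}(\Phi,\cdot)n$ vanish on $\Gamma_c^{+}$. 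Thus $\Phi$ satisfies an Oseen-type elliptic eigenvalue system in $\Omega$ with zero Cauchy data on the open boundary portion $\Gamma_c^{+}$. Invoking the interior-boundary unique continuation property for the Oseen (Stokes-type) system—as established in the references \cite{raym, ray2f} adapted from the analogous homogeneous setting, and using the smoothness of $\Phi$ near $\Gamma_c$ guaranteed by Lemma \ref{smgci}—one concludes $\Phi \equiv 0$ in $\Omega$. I expect the delicate points to be: first, verifying that the localization operator $M$ does not destroy the Cauchy-data information (handled by $m\equiv 1$ on $\Gamma_c^{+}$), and second, the unique continuation itself, whose validity relies on the analyticity/regularity of solutions to the stationary Oseen system and must be stated carefully because $\Omega$ is only Lipschitz—this is exactly why the smoothness near $\Gamma_c$ from Lemma \ref{smgci} is needed, so that the corner singularities at $(0,0)$ and $(0,1)$ are avoided.
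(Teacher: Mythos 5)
Your proposal follows essentially the same route as the paper: reduction to the Hautus test for the finitely many unstable eigenvalues, the observation that $\mathcal{B}^{*}\Phi=0$ forces $B^{*}\Phi=0$ because $\mathrm{Re}\,B^{*}\Phi$ and $\mathrm{Im}\,B^{*}\Phi$ both lie in $U_{0}$ by the very construction \eqref{U0}, deduction of vanishing Cauchy data on $\Gamma_{c}^{+}$, and unique continuation for the adjoint Oseen eigenvalue problem (the paper realizes this last step concretely by extending $(\phi,\psi)$ by $(0,C_{0})$ into an enlarged domain $\Omega_{ex}$ and invoking \cite{lebeau}). The only imprecision is that $M(\mathbb{T}(\phi,\psi)n)=0$ on $\Gamma_{c}$ yields $\mathbb{T}(\phi,\psi)n=C_{0}n$ on $\mathrm{supp}(m)$ for an explicit constant $C_{0}$ rather than a literally vanishing stress, but since the pressure is determined only up to an additive constant this does not affect the argument.
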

Before going into the proof of Theorem \ref{thstab}, let us recall that the pair $(A,\mathcal{B})$ is stabilizable in $V^{0}_{n}(\Omega)$ iff for all $y_{0}\in V^{0}_{n}(\Omega),$ there exists a control $w_{c}\in L^{2}(0,\infty;\mathbb{R}^{N_{c}})$ such that the controlled system
$$Py'=APy+\mathcal{B}w_{c}\,\,\mbox{in}\,\,(0,\infty),\quad Py(0)=y_{0},$$
obeys
$$\int\limits_{0}^{\infty}\|Py(t)\|^{2}_{V^{0}_{n}(\Omega)}dt<\infty.$$
The proof of Theorem \ref{thstab} in a more intricate situation involving mixed boundary condition can be found in \cite{raym}. In \cite{raym} the localization operator $M,$ localizing the control, is simply the cutoff function $m$ whereas in our case $M$ is as defined in \eqref{docn}. For the sake of completeness, we present the proof of Theorem \ref{thstab} below, which follows step by step the one of \cite{raym} up to minor modifications. 
\begin{proof}[Proof of Theorem \ref{thstab}]
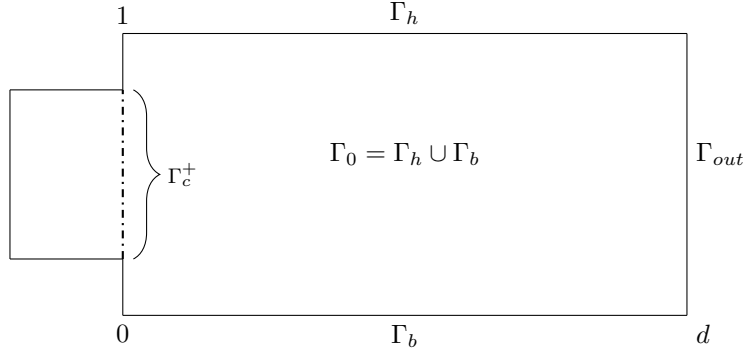
\begin{figure}[h!]\label{pic2}
	\centering
	\begin{tikzpicture}[scale=0.75]
	\draw (5,0)node [below] {$\Gamma_b$};
	\draw (0,0)node [below] {$0$};
	\draw (10,0)node [below right] {$d$};
	\draw (0,5)node [above] {$1$};
	\draw (10,2.5)node [above right] {$\Gamma_{out}$};
	\draw (5,5) node [above] {$\Gamma_h$};
	\draw (5,2.5) node [above] {$\Gamma_{0}=\Gamma_{h}\cup\Gamma_{b}$};
	\draw (0,0) -- (10,0);
	\draw (10,0) -- (10,5);
	\draw (10,5) -- (0,5);
	\draw (0,5) -- (0,4);
	\draw (0,1) -- (0,0);
	\draw [thick,dash dot] (0,1) -- (0,4);
	\draw [decorate,decoration={brace,amplitude=10pt,mirror,raise=4pt},yshift=0pt]
	(0,1) -- (0,4) node [black,midway,xshift=0.8cm] {\footnotesize
		$\Gamma^{+}_{c}$};
	\draw (0,1) -- (-2,1);
	\draw (-2,1) -- (-2,4);
	\draw (-2,4) -- (0,4);
	\end{tikzpicture}
	\caption{Domain $\Omega_{ex}$.}
\end{figure}
According to \cite[Theorem 1.2]{raypa} (one can also consult \cite[Chapter V]{ben} for related results) proving the stabilizability of the pair $(A,\mathcal{B})$ is equivalent to verifying the Hautus criterion:
\begin{align}
	\mbox{ker}(\lambda_{k}I-A^{*})\cap \mbox{ker}(\mathcal{B}^{*})=\{0\},\quad\mbox{for all}\quad 1\leqslant k \leqslant N_{u}.\label{Hutus}
	\end{align}
	Let $\phi\in \mbox{ker}(\lambda_{k}I-A^{*}).$ Also suppose that $\psi$ is the pressure associated with $\phi,$ $i.e.$ the pair $(\phi,\psi)$ solves \eqref{eivp1}.
		Now one can use \eqref{decB} and Proposition \ref{p2-4} in order to verify that
	\begin{align}\label{exB*}
	\displaystyle
	\mathcal{B}^{*}\phi&=-\left(\int\limits_{\Gamma_{c}}{g_{j}}\overline{M\mathbb{T}(\phi,\psi)n}\,dx \right)_{1\leqslant j \leqslant N_{c}}\\
	&=-\left(\int\limits_{\Gamma_{c}}{g_{j}}M\mbox{Re}\mathbb{T}(\phi,\psi)n\, dx \right)_{1\leqslant j \leqslant N_{c}}+i\left(\int\limits_{\Gamma_{c}}{g_{j}}M\mbox{Im}\mathbb{T}(\phi,\psi)n\, dx \right)_{1\leqslant j \leqslant N_{c}}.
	\end{align}
One can notice that $M\mbox{Re}\mathbb{T}(\phi,\psi)n\in U_{0}$ and $M\mbox{Im}\mathbb{T}(\phi,\psi)n\in U_{0}.$ On the other hand we know that $\{{g_{j}}\}_{1\leqslant j\leqslant N_{c}}$ forms a basis of $U_{0}.$ Hence $\mathcal{B}^{*}\phi=0$ implies that
$$M(\mathbb{T}(\phi,\psi)n)\mid_{\Gamma_{c}}=0.$$	
 This implies that
 \begin{equation}\label{sgmagc}
 \begin{array}{l}
 \mathbb{T}(\phi,\psi)n=C_{0}n\quad\mbox{on}\quad supp\,(m),
 \end{array}
 \end{equation}
 where $C_{0}$ is a constant given by
 \begin{align}
 \displaystyle C_{0}=\frac{1}{\displaystyle\int\limits_{\Gamma}m}\left(\int\limits_{\Gamma}m\mathbb{T}(\phi,\psi)n\right).\nonumber
 \end{align}
   Now recall that $\phi=0$ on $\Gamma$ and the unit outward normal on $\Gamma_{c}^{+}$ is $(-1,0).$ Also since $\phi\in V^{2}(\Omega),$ one can consider the trace of $\mbox{div}\phi$ on $\Gamma$ to obtain that $\mbox{div}\phi=0$ on $\Gamma.$ Using these facts one can at once deduce from \eqref{sgmagc} that $\displaystyle\frac{\partial\phi}{\partial n}=0$ and $\psi=C_{0}$ on $\Gamma^{+}_{c}.$\\  
   	Now consider the domain 
   	$\Omega_{ex}$
   	 which is an extension of the domain $\Omega$ (see Figure 2). Extend the function $\phi$ into $\Omega_{ex}$ by defining it zero outside $\Omega,$ denote the extension also by $\phi.$ Extend $\psi$ into $\Omega_{ex}$ by the constant $C_{0}$ outside $\Omega.$ We denote the extension of $\psi$ by $\psi$ itself. It is not hard to verify that the extended pair $(\phi,\psi)\in V^{2}(\Omega_{ex})\times H^{1}(\Omega_{ex})/\mathbb{R},$ solves the eigenvalue problem \eqref{eivp1} in the extended domain $\Omega_{ex}.$ Finally the unique continuation property from \cite{lebeau} shows that $\phi=0$ in $\Omega_{ex},$ thus in particular on $\Omega.$ Hence we are done with the proof of the Hautus test \eqref{Hutus}.    
	\end{proof} 
From Theorem \ref{thstab} we know that the pair $(A,\mathcal{B})$ is stabilizable by a control $w_{c}\in L^{2}(0,\infty;\mathbb{R}^{N_{c}}).$ Hence there exists a control $u$ (of the form \eqref{fdc}) which belongs to the finite dimensional space $U_{0}$ (see \eqref{U0}) and stabilizes the pair $(A,B).$\\
 Now our aim is to construct $w_{c}$ such that it is given in terms of a feedback control law. For that we will study the stabilization of the extended system \eqref{2-7} in the following section. 
\subsection{Stabilization of the extended system \eqref{2-7} by a feedback control}\label{stabext}
\subsubsection{Evolution equation associated with the extended system \eqref{2-7}}
We set 
\begin{equation}\label{deftZ}
\begin{array}{l}
{\widetilde Z}=V^{0}_{n}(\Omega)\times \mathbb{R}^{N_{c}}.
\end{array}
\end{equation}
 Depending on the context the notation $I$ denotes the identity operator for all of the spaces $V^{0}_{n}(\Omega),$ $\mathbb{R}^{N_{c}}$ and $\widetilde{Z}.$ We equip the space ${\widetilde Z}$ with the inner product
$$({\widetilde {\zeta}_{1}},{\widetilde \zeta}_{2})_{\widetilde Z}=(\zeta_{1},\zeta_{2})_{V^{0}_{n}(\Omega)}+(w_{1},w_{2})_{\mathbb{R}^{N_{c}}},$$
where ${\widetilde {\zeta}_{1}}=({\zeta_{1}},w_{1})$ and ${\widetilde \zeta_{2}}=(\zeta_{2},w_{2}).$ 
\newline
We fix a positive constant $\gamma$ (where $\gamma$ is the constant appearing in the extended system \eqref{2-7}).\\
Now let us recall the representation \eqref{2-6} of the system \eqref{2-5}. In the same note it follows that $\widetilde{y}=(P{y},w_{c})$ is a solution to equation \eqref{2-7} iff $(Py,w_{c})$ solves the following set of equations
\begin{equation}\label{3-4}
\left\{ \begin{array}{ll}
{\widetilde {y}'}= {{\begin{pmatrix}
		P{y}\\
		w_{c}
		\end{pmatrix}}}'=\begin{pmatrix}
A &  \mathcal{B}\\
0 &  -{\gamma I}
\end{pmatrix}
\begin{pmatrix}
P{y}\\
w_{c}
\end{pmatrix}+\begin{pmatrix}
0\\
I
\end{pmatrix}\varphi_{c}+
\widetilde{{f}}\quad&\mbox{in}\quad (0,\infty),\\[1.mm]
{\widetilde {y}}(0)=\widetilde{y}_{0}=\begin{pmatrix}
{y}_{0}\\0
\end{pmatrix},&
\vspace{1.mm}\\
(I-P){y}=\sum\limits_{j=1}^{N_{c}}{w_{j}}(I-P)D_{A}{{g}_{j}}\quad&\mbox{in}\quad (0,\infty),
\end{array}\right.
\end{equation}
where ${\widetilde{f}}=(P{f},0)$ and recall the definition of $\mathcal{B}$ from \eqref{decB}. Now we define the operator $(\widetilde{A},\mathcal{D}(\widetilde{A}))$ in $\widetilde{Z}$ as follows
\begin{equation}\label{DAt}
\begin{array}{l}
\mathcal{D}(\widetilde A)=\{(\zeta,w_{c})\in {\widetilde Z}\mid A\zeta+\mathcal{B}w_{c}\in {V}^{0}_{n}(\Omega)\}\quad \mbox{and} \quad {\widetilde A}=\begin{pmatrix}
A & \mathcal{B}\\
0 & -{\gamma I}
\end{pmatrix}.
\end{array}
\end{equation}
As we have identified $V^{0}_{n}(\Omega)$ with its dual, the space $\widetilde{Z}$ and $\widetilde{Z}^{*}$ are also identified. We define the adjoint of $({\widetilde A},\mathcal{D}({\widetilde A}))$ in $\widetilde{Z}$ as follows
\begin{equation}\label{DAt*}
\begin{array}{l}
\mathcal{D}({\widetilde A}^{*})=\mathcal{D}(A^{*})\times {\mathbb R}^{N_{c}}=\mathcal{D}(A)\times {\mathbb R}^{N_{c}}\quad \mbox{and}\quad {\widetilde A}^{*}=\begin{pmatrix}
A^{*} & 0\\
\mathcal{B}^{*} & -{\gamma I}
\end{pmatrix}.
\end{array}
\end{equation}
\begin{remark}
	We emphasize that due to the compatibility condition involved in the definition \eqref{DAt}, $\mathcal{D}(\widetilde{A})$ can not be written as $\mathcal{D}(A)\times\mathbb{R}^{N_{c}}.$ Contrary to that one does not require any compatibility condition in defining the domain of $\widetilde{A}^{*}$ which is given by \eqref{DAt*}.
\end{remark}
\begin{thm}\label{t3-3}
	The operator $({\widetilde A},\mathcal{D}({\widetilde A}))$ is the infinitesimal generator of an analytic semigroup on ${\widetilde Z}.$
\end{thm}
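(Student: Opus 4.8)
The plan is to exploit the block upper-triangular structure of $\widetilde{A}$ in \eqref{DAt}, whose diagonal blocks are the Oseen operator $A$ and the bounded operator $-\gamma I$ on $\mathbb{R}^{N_c}$. By Lemma \ref{t2-1} the operator $(A,\mathcal{D}(A))$ generates an analytic semigroup on $V^0_n(\Omega)$, so there exist $\omega\in\mathbb{R}$, $\theta\in(0,\pi/2)$ and $M>0$ such that the sector $\Sigma=\{\lambda\in\mathbb{C}\mid\lambda\neq\omega,\ |\arg(\lambda-\omega)|<\tfrac{\pi}{2}+\theta\}$ lies in the resolvent set of $A$ with $\|(\lambda I-A)^{-1}\|_{\mathcal{L}(V^0_n(\Omega))}\leqslant M/|\lambda-\omega|$. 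Enlarging $\omega$ if necessary we may assume $-\gamma\notin\overline{\Sigma}$, so that $(\lambda+\gamma)^{-1}$ is holomorphic and bounded by $C/|\lambda-\omega|$ on $\Sigma$. It then suffices to prove that $\mathcal{D}(\widetilde{A})$ is dense in $\widetilde{Z}$ and that $\Sigma$ lies in the resolvent set of $\widetilde{A}$ with the sectorial bound $\|(\lambda I-\widetilde{A})^{-1}\|_{\mathcal{L}(\widetilde{Z})}\leqslant C/|\lambda-\omega|$, after which the standard characterization of generators of analytic semigroups applies.

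For $(f,g)\in\widetilde{Z}$ and $\lambda\in\Sigma$, I would solve $(\lambda I-\widetilde{A})(\zeta,w_c)=(f,g)$ by back-substitution. The last row gives $w_c=(\lambda+\gamma)^{-1}g$, and the first row reads $(\lambda I-A)\zeta=f+\mathcal{B}w_c$. Since $\mathcal{B}w_c$ only lies in $(\mathcal{D}(A^*))'$, I read $(\lambda I-A)^{-1}$ here as the extension of the resolvent to an isomorphism from $(\mathcal{D}(A^*))'$ onto $V^0_n(\Omega)$, available because $A$ generates an analytic semigroup, so that $\zeta=(\lambda I-A)^{-1}(f+\mathcal{B}w_c)\in V^0_n(\Omega)$. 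The compatibility condition in the definition \eqref{DAt} is then automatic, since $A\zeta+\mathcal{B}w_c=\lambda\zeta-f\in V^0_n(\Omega)$, whence $(\zeta,w_c)\in\mathcal{D}(\widetilde{A})$; uniqueness follows because the two steps are each uniquely solvable. This exhibits the resolvent $(\lambda I-\widetilde{A})^{-1}$ explicitly.

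The main obstacle is the sectorial estimate of the term $(\lambda I-A)^{-1}\mathcal{B}w_c$, because $\mathcal{B}$ is unbounded. Here I would use that $\mathcal{B}$ has finite-dimensional range: by \eqref{decB}, $\mathcal{B}w_c=\sum_{j=1}^{N_c}(w_c)_j(\lambda_0 I-A)\xi_j$ with the \emph{fixed} vectors $\xi_j:=PD_Ag_j\in V^0_n(\Omega)$, which belong to $V^0_n(\Omega)$ by Lemma \ref{l2-2}(i). The resolvent identity $(\lambda I-A)^{-1}(\lambda_0 I-A)=I+(\lambda_0-\lambda)(\lambda I-A)^{-1}$ then yields
\begin{equation}\nonumber
(\lambda I-A)^{-1}\mathcal{B}w_c=\sum_{j=1}^{N_c}(w_c)_j\big[\xi_j+(\lambda_0-\lambda)(\lambda I-A)^{-1}\xi_j\big],
\end{equation}
and since $|\lambda_0-\lambda|\,\|(\lambda I-A)^{-1}\|_{\mathcal{L}(V^0_n(\Omega))}\leqslant M|\lambda_0-\lambda|/|\lambda-\omega|$ stays bounded on $\Sigma$, we obtain $\|(\lambda I-A)^{-1}\mathcal{B}w_c\|_{V^0_n(\Omega)}\leqslant C|w_c|\leqslant (C/|\lambda-\omega|)|g|$. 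Combined with $\|(\lambda I-A)^{-1}f\|_{V^0_n(\Omega)}\leqslant (M/|\lambda-\omega|)\|f\|_{V^0_n(\Omega)}$ and the bound on $|w_c|$, this gives the desired estimate $\|(\zeta,w_c)\|_{\widetilde{Z}}\leqslant (C/|\lambda-\omega|)\|(f,g)\|_{\widetilde{Z}}$.

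Finally, for density I would observe that $(\zeta,0)\in\mathcal{D}(\widetilde{A})$ whenever $\zeta\in\mathcal{D}(A)$, and that for any $w_c\in\mathbb{R}^{N_c}$ the element $\big(\sum_{j}(w_c)_j\xi_j,\,w_c\big)$ lies in $\mathcal{D}(\widetilde{A})$, because then $A\zeta+\mathcal{B}w_c=\lambda_0\sum_j(w_c)_j\xi_j\in V^0_n(\Omega)$. Writing an arbitrary $(\zeta_0,w_c)\in\widetilde{Z}$ as $\big(\zeta_0-\sum_j(w_c)_j\xi_j,\,0\big)+\big(\sum_j(w_c)_j\xi_j,\,w_c\big)$ and approximating the first component of the first summand by elements of the dense subspace $\mathcal{D}(A)\subset V^0_n(\Omega)$ establishes density. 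Together with the sectorial resolvent bound, this proves that $(\widetilde{A},\mathcal{D}(\widetilde{A}))$ generates an analytic semigroup on $\widetilde{Z}$.
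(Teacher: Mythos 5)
Your proof is correct in substance but follows a genuinely different route from the paper. The paper does not work with $\widetilde{A}$ directly: it passes to the adjoint $\widetilde{A}^{*}$ (lower triangular, with the \emph{bounded-on-a-fractional-domain} operator $\mathcal{B}^{*}$ in the off-diagonal corner), splits $\widetilde{A}^{*}=\widetilde{A}_{1}+\widetilde{A}_{2}$ with $\widetilde{A}_{1}$ diagonal, uses the estimate $\mathcal{B}^{*}\in\mathcal{L}(\mathcal{D}((\omega I-A^{*})^{3/4+\kappa/2}),\mathbb{R}^{N_{c}})$ from Proposition \ref{p2-4}$(ii)$ to see $\widetilde{A}_{2}$ as a perturbation of fractional order strictly less than one, and invokes a perturbation theorem for analytic generators, finally transferring the conclusion back to $\widetilde{A}$ through the resolvent--adjoint identity. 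You instead solve the resolvent equation for $\widetilde{A}$ itself by back-substitution and tame the unbounded term $(\lambda I-A)^{-1}\mathcal{B}w_{c}$ by exploiting the explicit factorization $\mathcal{B}w_{c}=\sum_{j}(w_{c})_{j}(\lambda_{0}I-A)\xi_{j}$ with fixed $\xi_{j}=PD_{A}g_{j}\in V^{0}_{n}(\Omega)$, together with the resolvent identity. This is more elementary: it avoids both the duality detour and the fractional-power perturbation machinery, at the price of using the specific finite-rank structure of $\mathcal{B}$ (the paper's route would survive a $\mathcal{B}$ that is merely relatively bounded of order $<1$). Your explicit verification of the compatibility condition defining $\mathcal{D}(\widetilde{A})$ and of density is a welcome addition, since the paper's argument leaves the identification of $\mathcal{D}(\widetilde{A})$ with the domain obtained by duality implicit.

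One small point to tighten: as written, the claim that $|\lambda_{0}-\lambda|/|\lambda-\omega|$ ``stays bounded on $\Sigma$'' fails near the vertex $\lambda=\omega$ of the sector (unless $\lambda_{0}=\omega$), so the bound you derive for $(\lambda I-A)^{-1}\mathcal{B}w_{c}$ degenerates there. The standard fix is the one you already gesture at for $-\gamma$: verify the sectorial estimate on a sector with a strictly larger vertex $\omega'>\omega$; on that sector $|\lambda-\omega|\geqslant|\omega'-\omega|\cos\theta>0$, so $|\lambda_{0}-\lambda|/|\lambda-\omega|$ is bounded and $1/|\lambda-\omega|$ is comparable to $1/|\lambda-\omega'|$, and the characterization of analytic generators applies with the new vertex. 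With that adjustment the argument is complete.
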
	
\begin{proof}
 We will prove that $({\widetilde A},\mathcal{D}({\widetilde A}))$ generates an analytic semigroup on ${\widetilde Z}$ by proving that $({\widetilde A}^{*},\mathcal{D}({\widetilde A}^{*}))$ generates an analytic semigroup on $\widetilde{Z}.$ This is enough since one has the following by using \cite[Theorem 2.16.5, p. 56]{hilleph} $$\|\mathcal{R}(\lambda,\widetilde{A})\|_{\mathcal{L}(\widetilde{Z})}=\|(\lambda I-\widetilde{A})^{-1}\|_{\mathcal{L}(\widetilde{Z})}=\|((\lambda I-\widetilde{A})^{-1})^{*}\|_{\mathcal{L}(\widetilde{Z})}=\|(\overline{\lambda} I-\widetilde{A}^{*})^{-1}\|_{\mathcal{L}(\widetilde{Z})}=\|\mathcal{R}(\overline{\lambda},\widetilde{A}^{*})\|_{\mathcal{L}(\widetilde{Z})},$$ where $\mathcal{R}(\lambda,\cdot)$ denotes the resolvent of the respective operator (see \cite[Section 2.16]{hilleph} for details on resolvent) and hence $({\widetilde A},\mathcal{D}({\widetilde A}))$ generates an analytic semigroup on $\widetilde{Z}$ follows from the fact that $({\widetilde A}^{*},\mathcal{D}({\widetilde A}^{*}))$ generates an analytic semigroup on $\widetilde{Z}$ as a consequence of \cite[p. 163, Def. 5.4.5]{tucsnak}.\\   
	Let us notice that the operator $\widetilde{A}^{*}$ can be decomposed as follows
	$$\widetilde{A}^{*}=\widetilde{A}_{1}+\widetilde{A}_{2},$$
	where 
	$$\widetilde{A}_{1}=\begin{pmatrix}
	A^{*} &  0\\
	0 &  -\gamma I
	\end{pmatrix}\quad\mbox{and}\quad \widetilde{A}_{2}=\begin{pmatrix}
	0 &  0\\
	\mathcal{B}^{*} &  0
	\end{pmatrix}.$$
Since $(A,\mathcal{D}(A))$ generates an analytic semigroup on $Z=V^{0}_{n}(\Omega)$ (see Lemma \ref{t2-1}), $(A^{*},\mathcal{D}(A^{*}))$ generates an analytic semigroup on $Z$ (follows from the argument used in the beginning of the proof). Consequently the operator 
$$\widetilde{A}^{0}_{1}=\begin{pmatrix}
A^{*} &  0\\
0 &  0
\end{pmatrix}$$
 generates an analytic semigroup on $\widetilde{Z}.$ Since $\widetilde{A}_{1}$ is a bounded perturbation of the operator $\widetilde{A}^{0}_{1},$ one uses \cite[Corollary 2.2, Section 3.2]{pazy} to conclude that $\widetilde{A}_{1}$ with domain $\mathcal{D}(A^{*})\times\mathbb{R}^{N_{c}}$ generates an analytic semigroup on $\widetilde{Z}.$ On the other hand the definition \eqref{decB} of $\mathcal{B}$ and part $(ii)$ of Proposition \ref{p2-4} furnish that $$\mathcal{B}^{*}\in\mathcal{L}(\mathcal{D}((\omega I-A^{*})^{\frac{3}{4}+\frac{\kappa}{2}},\mathbb{R}^{N_{c}})\quad \mbox{for all}\quad 0<\kappa<\frac{1}{2}.$$ 
 This implies the following
 \begin{equation}\label{A2d}
 \begin{array}{l}
 \widetilde{A}_{2}\in \mathcal{L}(\mathcal{D}((\omega I-A^{*})^{\frac{3}{4}+\frac{\kappa}{2}}\times \mathbb{R}^{N_{c}},\widetilde{Z})\quad \mbox{for all}\quad 0<\kappa<\frac{1}{2}.
 \end{array}
 \end{equation}
 Now observe that $(\widetilde{A}_{1}-\omega I)$ is a diagonal operator, hence the semigroup $e^{t(\widetilde{A}_{1}-\omega I)}$ on $\widetilde{Z}$ generated by $(\widetilde{A}_{1}-\omega I)$ is of the form
 $$e^{t(\widetilde{A}_{1}-\omega I)}(\zeta_{1},w_{1})=(e^{t(A^{*}-\omega I)}\zeta_{1},e^{t(-\gamma-\omega)I}w_{1}),\quad\mbox{for all}\quad (\zeta_{1},w_{1})\in\widetilde{Z}.$$
 Hence one can use the definition \cite[p.329]{fattorini} of the domain of fractional power to have the following
 \begin{equation}\label{fracpwr}
 \begin{array}{l}
 \mathcal{D}((\omega I-\widetilde{A}_{1})^{\frac{3}{4}+\frac{\kappa}{2}})=\mathcal{D}((\omega I-A^{*})^{\frac{3}{4}+\frac{\kappa}{2}})\times\mathbb{R}^{N_{c}}\quad\mbox{for all}\quad 0<\kappa<\frac{1}{2}.
 \end{array}
 \end{equation} 
 Finally in view of \eqref{A2d} and \eqref{fracpwr}, the result \cite[p. 420, Lemma 12.38]{renardy} furnish that $(\widetilde{A}^{*},\mathcal{D}(\widetilde{A}^{*}))$ is the infinitesimal generator of an analytic semigroup on $\widetilde{Z}.$ This in turn gives that $({\widetilde A},\mathcal{D}({\widetilde A}))$ generates an analytic semigroup on ${\widetilde Z}.$
\end{proof}
From the definition \eqref{DAt} of the operator $\widetilde{A}$ one can easily observe that the spectrum of $\widetilde{A}$ is discrete and is explicitly given as follows
$$spec(\widetilde{A})=spec(A)\cup\{-\gamma\}.$$
\subsubsection{Existence of a feedback control law}\label{efcl}
We introduce the notation
${{\widetilde J}}=(0,I).
$
Let us notice that ${{\widetilde J}}$ belongs to ${\mathcal L}(\mathbb{R}^{N_{c}},{\widetilde Z}).$ This section is devoted to the construction of a feedback control $\varphi_{c}$ which is able to stabilize the linear equation
\begin{equation}\label{linsys}
\begin{array}{l}
\widetilde{y}{'}=\widetilde{A}\widetilde{y}+{\widetilde J}\varphi_{c}\quad\mbox{in}\quad (0,\infty),\qquad \widetilde{y}(0)=\widetilde{y}_{0},
\end{array}
\end{equation}
which is obtained from \eqref{3-4}$_{1}$-\eqref{3-4}$_{2}$ after neglecting the non-homogeneous source term $\widetilde{f}.$
\begin{prop}\label{stabextr}
	The pair $({\widetilde {A}},{{\widetilde J}})$ is stabilizable. More precisely there exists a feedback operator $\mathcal{K}\in\mathcal{L}(\widetilde{Z},\mathbb{R}^{N_{c}})$ such that the operator $(\widetilde A+\widetilde{J}\mathcal{K})$ with domain $\mathcal{D}(\widetilde{A})$ generates an exponentially stable analytic semigroup on $\widetilde{Z}.$ 
\end{prop}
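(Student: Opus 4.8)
The plan is to deduce the stabilizability of $(\widetilde{A},\widetilde{J})$ from a Hautus-type test and then to realise the feedback $\mathcal{K}$ through an algebraic Riccati equation. The decisive structural gain of the extension is that $\widetilde{J}=(0,I)$ is a \emph{bounded} operator in $\mathcal{L}(\mathbb{R}^{N_{c}},\widetilde{Z})$, unlike the unbounded $\mathcal{B}$; together with the facts that $(\widetilde{A},\mathcal{D}(\widetilde{A}))$ generates an analytic semigroup on $\widetilde{Z}$ (Theorem \ref{t3-3}) and has compact resolvent, so that $spec(\widetilde{A})=spec(A)\cup\{-\gamma\}$ is discrete with only finitely many eigenvalues of nonnegative real part, this places us exactly in the framework of \cite[Theorem 1.2]{raypa} (see also \cite[Chapter V]{ben}), where stabilizability is equivalent to the Hautus criterion on the unstable modes.

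First I would verify, for each $1\leqslant k\leqslant N_{u}$, that
\begin{equation}\nonumber
\mbox{ker}(\lambda_{k}I-\widetilde{A}^{*})\cap \mbox{ker}(\widetilde{J}^{*})=\{0\}.
\end{equation}
Since $\gamma>0$, the eigenvalue $-\gamma$ is stable, so the only eigenvalues of $\widetilde{A}$ with nonnegative real part are $\lambda_{1},\dots,\lambda_{N_{u}}$. Using $\widetilde{J}^{*}(\phi,w)=w$ and the expression \eqref{DAt*} for $\widetilde{A}^{*}$, an element $(\phi,w)\in\mathcal{D}(\widetilde{A}^{*})=\mathcal{D}(A^{*})\times\mathbb{R}^{N_{c}}$ lying in this intersection satisfies $w=0$, $A^{*}\phi=\lambda_{k}\phi$ and $\mathcal{B}^{*}\phi-\gamma w=\lambda_{k}w$; with $w=0$ these collapse to $A^{*}\phi=\lambda_{k}\phi$ and $\mathcal{B}^{*}\phi=0$. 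The Hautus test \eqref{Hutus} for $(A,\mathcal{B})$, already established in Theorem \ref{thstab}, then forces $\phi=0$, hence $(\phi,w)=0$. This proves that $(\widetilde{A},\widetilde{J})$ is stabilizable.

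Next I would produce the feedback operator via the linear--quadratic regulator. Minimizing the cost $\int_{0}^{\infty}\big(\|\widetilde{y}(t)\|^{2}_{\widetilde{Z}}+\|\varphi_{c}(t)\|^{2}_{\mathbb{R}^{N_{c}}}\big)\,dt$ subject to \eqref{linsys}, and using that the pair is stabilizable (and trivially detectable, the observation being the identity), the corresponding algebraic Riccati equation admits a unique nonnegative self-adjoint solution $\Pi\in\mathcal{L}(\widetilde{Z})$, and $\mathcal{K}=-\widetilde{J}^{*}\Pi\in\mathcal{L}(\widetilde{Z},\mathbb{R}^{N_{c}})$ is a stabilizing feedback; this is the Riccati-based construction of \cite{raypa}. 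Because $\widetilde{J}\mathcal{K}$ is a bounded operator on $\widetilde{Z}$, the operator $(\widetilde{A}+\widetilde{J}\mathcal{K})$ retains the domain $\mathcal{D}(\widetilde{A})$ and still generates an analytic semigroup by the bounded-perturbation result \cite[Corollary 2.2, Section 3.2]{pazy}; analyticity guarantees that the growth bound of the semigroup coincides with its spectral bound, so the stabilization furnished by $\mathcal{K}$ is in fact exponential. This yields the assertion of the proposition.

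I do not expect a hard analytic obstacle in this step, and that is precisely the point of introducing the extended system: passing to \eqref{2-7} has converted an unbounded-control stabilization problem into one with the bounded control operator $\widetilde{J}$, for which the Riccati theory applies directly. The only genuine content is the reduction of the extended Hautus test to Theorem \ref{thstab}, while the sole point demanding care is the non-product structure of $\mathcal{D}(\widetilde{A})$ arising from the compatibility condition in \eqref{DAt}; since $\widetilde{J}\mathcal{K}$ is bounded this domain is left unchanged, and both analyticity and exponential decay survive the feedback perturbation.
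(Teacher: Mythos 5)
Your proposal is correct and follows essentially the same route as the paper: the Hautus test for $(\widetilde{A},\widetilde{J})$ is reduced, via $\widetilde{J}^{*}(\phi,w)=w$ and the triangular structure of $\widetilde{A}^{*}$, to the already-established test $\ker(\lambda_{k}I-A^{*})\cap\ker(\mathcal{B}^{*})=\{0\}$ of Theorem \ref{thstab}, and the feedback is then obtained as $\mathcal{K}=-\widetilde{J}^{*}\widetilde{\mathcal{P}}$ from an algebraic Riccati equation, exactly as in the paper (which invokes \cite[Theorem 3]{kes} for the Riccati equation \eqref{are} rather than the standard LQR formulation, a cosmetic difference). Your added remarks that $\widetilde{J}\mathcal{K}$ is bounded, so that $\mathcal{D}(\widetilde{A}+\widetilde{J}\mathcal{K})=\mathcal{D}(\widetilde{A})$ and analyticity persists under the perturbation, are accurate and merely make explicit what the paper leaves implicit.
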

Before going into the proof of Proposition \ref{stabextr}, let us recall that the pair $(\widetilde{A},\widetilde{J})$ is stabilizable in $\widetilde{Z}$ iff for all $\widetilde{y}_{0}\in \widetilde{Z},$ there exists a control $\varphi_{c}\in L^{2}(0,\infty;\mathbb{R}^{N_{c}})$ such that the controlled system 
$${\widetilde{y}}'=\widetilde{A}\widetilde{y}+\widetilde{J}\varphi_{c}\,\,\mbox{in}\,\,(0,\infty),\qquad \widetilde{y}=\widetilde{y}_{0},$$
satisfies
$$\displaystyle \|\widetilde{y}\|_{L^{2}(0,\infty;\widetilde{Z})}<\infty.$$
\begin{proof}[Proof of Proposition \ref{stabextr}] 
	We check the stabilizability of the pair $(\widetilde{A},{\widetilde J})$ by verifying the Hautus criterion \cite[Theorem 1.2]{raypa} (one can also consult \cite[Chapter V]{ben} for related results): 
	\begin{equation}\label{Hutus2}
	\begin{array}{l}
	\mbox{ker}(\widetilde\lambda_{k} I-\widetilde{A}^{*})\cap\mbox{Ker}({\widetilde J}^{*})=\{0\}\quad\mbox{for all}\quad \widetilde\lambda_{k}\in spec(\widetilde{A})\quad\mbox{with}\quad\mbox{Re}\widetilde{\lambda}_{k}>0.
	\end{array}
	\end{equation}
	 Let us prove \eqref{Hutus2}. We consider $$\begin{pmatrix}
	 \phi\\w
	 \end{pmatrix}\in \mbox{ker}(\widetilde\lambda_{k} I-\widetilde{A}^{*})\cap\mbox{Ker}({\widetilde J}^{*}). $$
Recall that ${\widetilde J}^{*}(
\phi,w
)=w.$
This gives $w=0.$\\
Now use the relation $(
\phi,0
)\in\mbox{ker}(\widetilde\lambda_{k} I-\widetilde{A}^{*}),$ to obtain
 $$(\widetilde\lambda_{k} I-A^{*})\phi=\mathcal{B}^{*}\phi=0.$$
 Hence $\phi=0,$ since the pair $(A,\mathcal{B})$ is stabilizable (see Theorem \ref{thstab}). This furnishes the stabilizability of the pair $(\widetilde{A},{\widetilde J}).$\\
 We consider the following Riccati equation
 	\begin{equation}\label{are}
 	\left\{ \begin{array}{l}
 	\widetilde{\mathcal{P}}\in \mathcal{L}(\widetilde{Z},\widetilde{Z}),\quad \widetilde{\mathcal{P}}=\widetilde{\mathcal{P}}^{*}>0,\\
 	\widetilde{\mathcal{P}}\widetilde{A}+\widetilde{A}^{*}\widetilde{\mathcal{P}}-\widetilde{\mathcal{P}}{\widetilde J}{\widetilde J}^{*}\widetilde{{P}}=0,\\
 	\widetilde{\mathcal{P}}\,\mathrm{is\, invertible}.
 	\end{array}\right.
 	\end{equation}	 
 Using \cite[Theorem 3]{kes}, there exists a solution $\widetilde{\mathcal{P}}$ to the Riccati equation \eqref{are} and the operator $\mathcal{K}=-\widetilde{J}^{*}\widetilde{\mathcal{P}}\in\mathcal{L}(\widetilde{Z},\mathbb{R}^{N_{c}}),$ provides a stabilizing feedback for $(\widetilde{A},\widetilde{J}).$ The operator $(\widetilde{A}+\widetilde{J}\mathcal{K})$ with domain
 $$\mathcal{D}(\widetilde{A}+\widetilde{J}\mathcal{K})=\mathcal{D}(\widetilde{A})$$
 is the generator of an exponentially stable analytic semigroup on $\widetilde{Z}.$
\end{proof}
From now onwards we will not use the explicit expression of the feedback controller $\mathcal{K}$ which was constructed in the proof of Proposition \ref{stabextr}, in fact we will only use that $\mathcal{K}\in\mathcal{L}(\widetilde{Z},\mathbb{R}^{N_{c}})$ and $\mathcal{D}(\widetilde{A}+\widetilde{J}\mathcal{K})=\mathcal{D}(\widetilde{A}).$

\subsubsection{Stabilization of the closed loop extended system with a non homogeneous source term}
Using the feedback control $\mathcal{K}$, we write the equation \eqref{3-4}$_{1}$-\eqref{3-4}$_{2}$ as the following closed loop system
\begin{equation}\label{closedloop}
\left\{ \begin{array}{l}
{\widetilde {y}'}= \widetilde{A}\widetilde{y}+{\widetilde J}\mathcal{K}\widetilde{y}+
\widetilde{{f}}\quad\mbox{in}\quad (0,\infty),\\
{\widetilde {y}}(0)=\widetilde{y}_{0}.
\end{array}\right.
\end{equation}

 From now on the constant $K(>0)$ appearing in the inequalities will denote a generic positive constant which may change from line to line. If we want to specify a constant (to use it for later purpose) we will denote it by $K_{i},$ for some natural number $i.$
 \begin{lem}\label{l3-8}	
 	Let the following hold
 	\begin{equation}\label{cofy0}
 	\begin{array}{l}
 	\widetilde{f}\in L^{2}(0,\infty;V^{0}_{n}(\Omega)\times\mathbb{R}^{N_{c}})\quad\mbox{and}\quad \widetilde{y}_{0}\in V^{1}_{0}(\Omega)\times\{0\},
 	\end{array}
 	\end{equation}
 	 where $\{0\}$ denotes the zero element of $\mathbb{R}^{N_{c}}.$ Then the equation \eqref{closedloop} admits a unique solution in $$\widetilde{y}\in H^{1}(0,\infty;{\widetilde Z})\cap L^{2}(0,\infty;{\mathcal{D}}({\widetilde{A}}))$$ which obeys
 	\begin{equation}\label{3-24}
 	\begin{array}{l}
 	\| \widetilde{y} \|_{H^{1}(0,\infty;{\widetilde Z})\cap L^{2}(0,\infty;{\mathcal{D}}({\widetilde{A}}))}\leqslant K(\| \widetilde{y}_{0} \|_{{V}^{1}_{0}(\Omega)\times\mathbb{R}^{N_{c}}}+\| \widetilde{f} \|_{L^{2}(0,\infty;L^{2}(\Omega)\times{R}^{N_{c}})}),
 	\end{array}
 	\end{equation}
 	for some positive constant $K.$
 \end{lem}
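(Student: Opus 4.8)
The plan is to read \eqref{closedloop} as an abstract parabolic Cauchy problem governed by the generator $(\widetilde{A}+\widetilde{J}\mathcal{K},\mathcal{D}(\widetilde{A}))$ and to invoke the maximal $L^{2}$-regularity of analytic semigroups. By Proposition \ref{stabextr} this operator generates an \emph{exponentially stable} analytic semigroup on the Hilbert space $\widetilde{Z}$, with domain $\mathcal{D}(\widetilde{A})$. For such a generator the solution map $\widetilde{y}\mapsto\bigl(\widetilde{y}(0),\,\widetilde{y}'-(\widetilde{A}+\widetilde{J}\mathcal{K})\widetilde{y}\bigr)$ is an isomorphism from $H^{1}(0,\infty;\widetilde{Z})\cap L^{2}(0,\infty;\mathcal{D}(\widetilde{A}))$ onto $(\widetilde{Z},\mathcal{D}(\widetilde{A}))_{1/2,2}\times L^{2}(0,\infty;\widetilde{Z})$ (de Simon's theorem for $L^{2}$ maximal regularity on Hilbert spaces; see also the analogous statements used in \cite{raye}). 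The exponential stability is precisely what permits the time interval to be the whole half-line $(0,\infty)$ and produces the estimate \eqref{3-24} with a finite constant and no growth in time. This yields existence, uniqueness (the latter also directly from $(\widetilde{A}+\widetilde{J}\mathcal{K})$ being a generator), and the bound, \emph{provided} the initial datum lies in the trace space $(\widetilde{Z},\mathcal{D}(\widetilde{A}))_{1/2,2}$.

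The crux is therefore to verify that the hypothesis \eqref{cofy0} places $\widetilde{y}_{0}$ in that trace space, that is, that
$$V^{1}_{0}(\Omega)\times\{0\}\hookrightarrow(\widetilde{Z},\mathcal{D}(\widetilde{A}))_{1/2,2}.$$
Here I would use the standard identification $(V^{0}_{n}(\Omega),\mathcal{D}(A))_{1/2,2}=V^{1}_{0}(\Omega)$, reflecting that $V^{1}_{0}(\Omega)$ is the domain of the square root of $\lambda_{0}I-A$. The subtlety is that $\mathcal{D}(\widetilde{A})$ is \emph{not} the product $\mathcal{D}(A)\times\mathbb{R}^{N_{c}}$: the definition \eqref{DAt} couples the two components through the compatibility constraint $A\zeta+\mathcal{B}w_{c}\in V^{0}_{n}(\Omega)$, and since $\mathcal{B}\in\mathcal{L}(\mathbb{R}^{N_{c}},(\mathcal{D}(A^{*}))')$ is not bounded into $V^{0}_{n}(\Omega)$, a nonzero $w_{c}$ forces $\zeta$ to carry a singular boundary lifting. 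This is exactly where the choice $w_{c}(0)=0$ pays off: because the second component of $\widetilde{y}_{0}$ vanishes, the compatibility constraint is inactive at $t=0$ and one only needs $y_{0}\in V^{1}_{0}(\Omega)$. Making this rigorous amounts to showing that on the subspace with vanishing second component the interpolation decouples, so that $(\widetilde{Z},\mathcal{D}(\widetilde{A}))_{1/2,2}$ contains $(V^{0}_{n}(\Omega),\mathcal{D}(A))_{1/2,2}\times\{0\}=V^{1}_{0}(\Omega)\times\{0\}$, the finite dimensionality of the $\mathbb{R}^{N_{c}}$ factor rendering its contribution harmless.

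I expect this identification of the trace space, against the non-product structure of $\mathcal{D}(\widetilde{A})$, to be the main obstacle; the maximal-regularity machinery and the passage to the infinite horizon are then routine consequences of analyticity and exponential stability recorded in Theorem \ref{t3-3} and Proposition \ref{stabextr}. Once the embedding is in hand, applying the isomorphism theorem to \eqref{closedloop} with $\widetilde{f}\in L^{2}(0,\infty;\widetilde{Z})$ and $\widetilde{y}_{0}\in V^{1}_{0}(\Omega)\times\{0\}$ delivers the regularity $\widetilde{y}\in H^{1}(0,\infty;\widetilde{Z})\cap L^{2}(0,\infty;\mathcal{D}(\widetilde{A}))$ together with the estimate \eqref{3-24}, completing the proof.
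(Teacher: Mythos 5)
Your proposal is correct and takes essentially the same route as the paper: the paper likewise reduces the lemma to the embedding $V^{1}_{0}(\Omega)\times\{0\}=[\mathcal{D}(A),V^{0}_{n}(\Omega)]_{1/2}\times\{0\}=[\mathcal{D}(A)\times\{0\},V^{0}_{n}(\Omega)\times\{0\}]_{1/2}\subset[\mathcal{D}(\widetilde{A}),\widetilde{Z}]_{1/2}$ (justified via Lions--Magenes), exploiting exactly the point you identify that the vanishing second component sidesteps the non-product structure of $\mathcal{D}(\widetilde{A})$, and then concludes by the maximal-regularity isomorphism theorem for the exponentially stable analytic semigroup generated by $\widetilde{A}+\widetilde{J}\mathcal{K}$ (citing Bensoussan et al.\ rather than de Simon, which is the same result in this Hilbert-space setting).
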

 \begin{proof}
 	Observe that
 	$$\widetilde{y}_{0}\in V^{1}_{0}(\Omega)\times\{0\}=[\mathcal{D}(A),V^{0}_{n}(\Omega)]_{1/2}\times\{0\}\underset{(i)}{=} [\mathcal{D}(A)\times\{0\},V^{0}_{n}(\Omega)\times\{0\}]_{1/2}\underset{(ii)}{\subset}[\mathcal{D}(\widetilde{A}),\widetilde{Z}]_{1/2},$$
 	the steps $(i)$ and $(ii)$ in the calculation above directly follows by using the definition of interpolation spaces provided by using \cite[p. 92, Theorem 14.1]{liomag} and \cite[Remark 14.1]{liomag}.\\
 	Since
 	$$\widetilde{f}\in L^{2}(0,\infty;\widetilde{Z})\quad\mbox{and}\quad \widetilde{y}_{0}\in [\mathcal{D}(\widetilde{A}),\widetilde{Z}]_{1/2},$$ one can use the isomorphism theorem \cite[Part II, Section 3.6.3, Theorem 3.1]{ben} to conclude the proof of Lemma \ref{l3-8}.
 \end{proof}
\begin{corollary}\label{c3-8}	
	Let the following hold
	\begin{equation}\label{cofy00}
	\begin{array}{l}
	{f}\in L^{2}(Q_{\infty})\quad\mbox{and}\quad {y}_{0}\in V^{1}_{0}(\Omega).
	\end{array}
	\end{equation}
	 Then the equation
	\begin{equation}\label{114r}
    \left\{ \begin{array}{ll}
    \displaystyle \frac{\partial {y}}{\partial t}-\beta {y}-\nu \Delta {y}+ ({v}_s \cdot \nabla){y}+({y} \cdot \nabla){v}_s +\nabla  q={f}\quad &\mbox{in}\quad Q_{\infty},
    \vspace{1.mm}\\
    \mathrm{div}{y}=0\quad &\mbox{in}\quad Q_{\infty},
    \vspace{2.mm}\\
    {y}=0\quad &\mbox {on} \quad(\Gamma_{0}\cup\Gamma_{out}) \times (0,\infty),\\[1.mm]
    {y}=\sum\limits_{j=1}^{N_{c}}w_{j}(t){{g}_{j}}(x) \quad &\mbox{on} \quad \ \Gamma_{in} \times (0,\infty),
    \vspace{1.mm}\\
    {y}(x,0)={y}_0\quad &\mbox{in}\quad\Omega,\\[2.mm]
    \displaystyle
    w_{c}^{'}+{\gamma }w_{c}-\mathcal{K}(Py,w_{c})=0\quad &\mbox{in}\quad (0,\infty),
    \vspace{1.mm}\\
    w_{c}(0)=0\quad&\mbox{in}\quad\Omega,
    \end{array}\right.
    \end{equation}
	$w_{c}=(w_{1},...,w_{N_{c}})$ and $g_{j},$ for all $1\leqslant j\leqslant N_{c}$ are defined in \eqref{basisU0},
	 admits a unique solution $(
	 y,w_{c}
	 )$ in $V^{2,1}(Q_{\infty})\times H^{1}(0,\infty;\mathbb{R}^{N_{c}})$ and the pair $
	 (y,w_{c})
	 $ obeys the following estimate
	\begin{equation}\label{esty}
	\begin{array}{l}
	\| (y,w_{c})\|_{V^{2,1}(Q_{\infty})\times H^{1}(0,\infty;\mathbb{R}^{N_{c}})}\leqslant K_{1}(\| {y}_{0} \|_{{V}^{1}_{0}(\Omega)} +\| {f} \|_{L^{2}(Q_{\infty})}),
	\end{array}
	\end{equation}
	for some positive constant $K_{1}.$\\
	In addition, there exists a constant $K_{2}>0$ such that the control
	 \begin{equation}\label{concor}
	 \begin{array}{l}
	 u(x,t)=\sum\limits_{j=1}^{N_{c}}w_{j}(t){g_{j}}(x),
	 \end{array}
	 \end{equation}
	 satisfies the following estimate 
	 	\begin{equation}\label{dofcon}
	 	\begin{array}{l}
	 	\|{u}(x,t) \|_{L^{\infty}(\Sigma_{\infty})}\leqslant K_{2}(\|{y}_{0} \|_{{V}^{1}_{0}(\Omega)}+\|{f} \|_{L^{2}(Q_{\infty})}).
	 	\end{array}
	 	\end{equation}
\end{corollary}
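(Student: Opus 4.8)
The plan is to derive the corollary from the abstract maximal-regularity statement of Lemma~\ref{l3-8} and then to reconstruct the full velocity field, recovering the $V^{2,1}(Q_\infty)$ regularity through the Dirichlet lifting $D_A$ exactly as in the operator formulation \eqref{3-4}. First I would set $\widetilde{f}=(Pf,0)$ and $\widetilde{y}_0=(y_0,0)$. Since $f\in L^2(Q_\infty)$ and $P\in\mathcal{L}(L^2(\Omega),V^0_n(\Omega))$, we have $\widetilde{f}\in L^2(0,\infty;V^0_n(\Omega)\times\mathbb{R}^{N_c})$, and $y_0\in V^1_0(\Omega)$ gives $\widetilde{y}_0\in V^1_0(\Omega)\times\{0\}$, so the hypotheses \eqref{cofy0} of Lemma~\ref{l3-8} are met. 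Applying that lemma to the closed-loop system \eqref{closedloop} produces a unique $\widetilde{y}=(Py,w_c)\in H^1(0,\infty;\widetilde{Z})\cap L^2(0,\infty;\mathcal{D}(\widetilde{A}))$ satisfying \eqref{3-24}. The second block of \eqref{closedloop} (equivalently \eqref{3-4}) reads $w_c'+\gamma w_c-\mathcal{K}(Py,w_c)=0$, $w_c(0)=0$, which is precisely \eqref{114r}$_{6}$--\eqref{114r}$_{7}$; moreover $w_c\in H^1(0,\infty;\mathbb{R}^{N_c})$ with norm bounded by $\|y_0\|_{V^1_0(\Omega)}+\|f\|_{L^2(Q_\infty)}$, which is the $w_c$ part of \eqref{esty}.

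The principal difficulty, which I expect to be the main obstacle, is to upgrade the abstract regularity of $Py$ to genuine $V^{2,1}(Q_\infty)$ regularity of the reconstructed velocity. I would recover $y$ from the splitting $y=Py+(I-P)y$ together with the algebraic identity \eqref{3-4}$_{3}$, namely $(I-P)y=\sum_{j=1}^{N_c}w_j(t)(I-P)D_A g_j$. Each $g_j$ is smooth and supported in $\Gamma_c$ (Lemma~\ref{smgci}), hence lies in $V^{3/2}(\Gamma)$, so Lemma~\ref{h2reg} gives $D_A g_j\in V^2(\Omega)$; combined with $w_c\in H^1(0,\infty;\mathbb{R}^{N_c})$ this already places the lifting $\sum_j w_j D_A g_j$ in $H^1(0,\infty;V^2(\Omega))\subset V^{2,1}(Q_\infty)$. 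To control $Py$ I would subtract this lifting and set $\tilde z=y-\sum_j w_j D_A g_j=Py-\sum_j w_j PD_A g_j$, which is divergence free, vanishes on $\Gamma$, and satisfies $\tilde z(0)=y_0$ because the compatibility condition $w_c(0)=0$ kills the boundary contribution (this is exactly why $w_c(0)=0$ was imposed). Using $Py'=APy+\mathcal{B}w_c+Pf$ with $\mathcal{B}w_c=\sum_j w_j(\lambda_0 I-A)PD_A g_j$ from \eqref{decB}, a direct computation gives $\tilde z'=A\tilde z+F$ with $F=\lambda_0\sum_j w_j PD_A g_j-\sum_j w_j'PD_A g_j+Pf\in L^2(0,\infty;V^0_n(\Omega))$ and $\tilde z(0)=y_0\in V^1_0(\Omega)=[\mathcal{D}(A),V^0_n(\Omega)]_{1/2}$. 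Since $A$ generates an analytic semigroup (Lemma~\ref{t2-1}), maximal parabolic regularity yields $\tilde z\in L^2(0,\infty;\mathcal{D}(A))\cap H^1(0,\infty;V^0_n(\Omega))$ with $\mathcal{D}(A)=V^2(\Omega)\cap V^1_0(\Omega)$, and adding back the lifting gives $y\in V^{2,1}(Q_\infty)$ together with \eqref{esty}. The delicate point is that $\Omega$ is only Lipschitz, so the identity $\mathcal{D}(A)=V^2(\Omega)\cap V^1_0(\Omega)$ and the $V^2$ regularity of $D_A g_j$ must rest on the $H^2$ elliptic regularity for the rectangle (\cite[Theorem 3.2.1.3]{gris}) underlying Lemma~\ref{h2reg}, rather than on smooth-domain theory. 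Uniqueness of $(y,w_c)$ follows from the uniqueness in Lemma~\ref{l3-8} and the deterministic reconstruction of $y$ from $(Py,w_c)$.

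Finally, the bound \eqref{dofcon} on the control \eqref{concor} is the easiest step. Each $g_j\in C^\infty(\overline{\Gamma}_c)\subset L^\infty(\Gamma)$ is a fixed function, while the one-dimensional Sobolev embedding $H^1(0,\infty)\hookrightarrow L^\infty(0,\infty)$ gives $\|w_j\|_{L^\infty(0,\infty)}\le C\|w_j\|_{H^1(0,\infty)}$. Hence
$$\|u\|_{L^\infty(\Sigma_\infty)}\le\sum_{j=1}^{N_c}\|w_j\|_{L^\infty(0,\infty)}\|g_j\|_{L^\infty(\Gamma)}\le K\|w_c\|_{H^1(0,\infty;\mathbb{R}^{N_c})}\le K_2\left(\|y_0\|_{V^1_0(\Omega)}+\|f\|_{L^2(Q_\infty)}\right),$$
where the last inequality invokes the part of \eqref{esty} obtained in the first step. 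This completes the proof.
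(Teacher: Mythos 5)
Your first and last steps coincide with the paper's: both reduce to Lemma \ref{l3-8} applied to the closed-loop system \eqref{closedloop} with $\widetilde f=(Pf,0)$ and $\widetilde y_0=(y_0,0)$, and both deduce \eqref{dofcon} from $g_j\in L^\infty(\Gamma)$ together with $H^1(0,\infty)\hookrightarrow L^\infty(0,\infty)$. Where you diverge is the recovery of the $V^{2,1}(Q_\infty)$ regularity of $y$: the paper keeps the nonhomogeneous boundary data and runs an elliptic bootstrap on the stationary Stokes system (first $f^*\in L^2(0,\infty;H^{-1}(\Omega))$ gives $y\in L^2(0,\infty;V^1(\Omega))$, then $f^*\in L^2(Q_\infty)$ gives $y\in L^2(0,\infty;V^2(\Omega))$, using the Stokes regularity results it cites), whereas you subtract the Dirichlet lifting $\sum_j w_jD_Ag_j$ and reduce to a homogeneous-boundary abstract Cauchy problem. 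Your algebra there is correct: $\tilde z=Py-\sum_jw_jPD_Ag_j$, $\tilde z(0)=y_0$ precisely because $w_c(0)=0$, and $\tilde z'=A\tilde z+F$ with your $F\in L^2(0,\infty;V^0_n(\Omega))$. This is a legitimate alternative route.

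The gap is in how you close that step. You invoke maximal parabolic regularity for the generator $A$ on the interval $(0,\infty)$. That tool is not available here: $A$ has $N_u\geqslant 1$ eigenvalues with positive real part (this is the whole reason a feedback is needed), so $e^{tA}$ is not exponentially stable, and the map $F\mapsto\int_0^te^{(t-s)A}F(s)\,ds$ does not send $L^2(0,\infty;V^0_n(\Omega))$ into $L^2(0,\infty;\mathcal{D}(A))$ --- a compactly supported $F$ with a component along an unstable eigenvector already produces exponential growth. The fix is to use the global information you already extracted from Lemma \ref{l3-8} instead of re-solving the Cauchy problem: $(Py,w_c)\in L^2(0,\infty;\mathcal{D}(\widetilde A))$ means exactly that $APy+\mathcal{B}w_c\in L^2(0,\infty;V^0_n(\Omega))$ (recall $\mathcal{D}(\widetilde A)\neq\mathcal{D}(A)\times\mathbb{R}^{N_c}$), hence $A\tilde z=APy+\mathcal{B}w_c-\lambda_0\sum_jw_jPD_Ag_j\in L^2(0,\infty;V^0_n(\Omega))$; since $\lambda_0$ lies in the resolvent set and $(\lambda_0I-A)^{-1}$ maps $V^0_n(\Omega)$ boundedly into $V^2(\Omega)\cap V^1_0(\Omega)$ (the rectangle $H^2$-regularity you correctly flag), this yields $\tilde z\in L^2(0,\infty;V^2(\Omega))$ directly, while $\tilde z\in H^1(0,\infty;V^0_n(\Omega))$ is immediate from $Py\in H^1(0,\infty;V^0_n(\Omega))$ and $w_c\in H^1(0,\infty;\mathbb{R}^{N_c})$. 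With that repair your argument is complete and \eqref{esty} follows with constants inherited from \eqref{3-24}.
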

\begin{proof}
Using the notations used in \eqref{3-4}, one observes that $\|\widetilde{y}_{0}\|_{V^{1}_{0}(\Omega)\times\mathbb{R}^{N_{c}}}=\|(y_{0},0)\|_{V^{1}_{0}(\Omega)\times\mathbb{R}^{N_{c}}}=\|y_{0}\|_{V^{1}_{0}(\Omega)}$ and $\|\widetilde{f}\|_{L^{2}(0,\infty;L^{2}(\Omega)\times\mathbb{R}^{N_{c}})}=\|(Pf,0)\|_{L^{2}(0,\infty;L^{2}(\Omega)\times\mathbb{R}^{N_{c}})}=\|Pf\|_{L^{2}(Q_{\infty})}.$ Since the closed loop system \eqref{closedloop} along with \eqref{3-4}$_{3}$ is the operator representation of \eqref{114r}, one can use Lemma \ref{l3-8} (particularly the estimate \eqref{3-24}) to obtain the following 
	\begin{equation}\label{esPy}
	\begin{array}{l}
	\| (P{y},w_{c}) \|_{H^{1}(0,\infty;V^{0}_{n}(\Omega)\times\mathbb{R}^{N_{c}})\cap L^{2}(0,\infty;\mathcal{D}({\widetilde{A}}))}\leqslant K(\|y_{0}\|_{V^{1}_{0}(\Omega)}+\|Pf\|_{L^{2}(Q_{\infty})}).
	\end{array}
	\end{equation}
    Now we estimate 
	$$(I-P)y=\sum\limits_{j=1}^{N_{c}}w_{j}(I-P)D_{A}{{g}_{j}}.$$
	We know that there exists a positive constant $K$ such that for all $1\leqslant j \leqslant N_{c}$
	\begin{equation}\label{estI-P1}
	\begin{array}{l}
	\|D_{A}{g_{j}}\|_{V^{2}(\Omega)}\leqslant K\|{g_{j}}\|_{H^{3/2}(\Gamma)}\leqslant K.
	\end{array}
	\end{equation}
	Estimates \eqref{estI-P1} and \eqref{esPy} yield
	\begin{equation}\label{estI-P2}
	\begin{array}{l}
	\|(I-P)y\|_{H^{1}(0,\infty;H^{2}(\Omega))}\leqslant K(\|y_{0}\|_{V^{1}_{0}(\Omega)}+\|Pf\|_{L^{2}(Q_{\infty})}).
	\end{array}
	\end{equation} 
	Once again using \eqref{esPy} and \eqref{estI-P2} one has
	\begin{equation}\label{inregy}
	\begin{split}
	\displaystyle
	\|y\|_{H^{1}(0,\infty;L^{2}(\Omega))}&\leqslant \|Py\|_{H^{1}(0,\infty;V^{0}_{n}(\Omega))}+\|(I-P)y\|_{H^{1}(0,\infty;H^{2}(\Omega))}\\
	&\leqslant K(\|y_{0}\|_{V^{1}_{0}(\Omega)}+\|Pf\|_{L^{2}(Q_{\infty})}).
	\end{split}
	\end{equation}
	To prove higher regularity of $y$ we will use a bootstrap argument. First we write \eqref{114r}$_{1}$-\eqref{114r}$_{5}$ as follows
	\begin{equation}\label{114r*}
	\left\{ \begin{array}{ll}
	\displaystyle -\nu \Delta {y}+\nabla  q=f^{*}\quad &\mbox{in}\quad Q_{\infty},
	\vspace{1.mm}\\
	\mathrm{div}{y}=0\quad &\mbox{in}\quad Q_{\infty},
	\vspace{2.mm}\\
	{y}=0\quad &\mbox {on} \quad(\Gamma_{0}\cup\Gamma_{out}) \times (0,\infty),\\[1.mm]
	{y}=\sum\limits_{j=1}^{N_{c}}w_{j}(t){{g}_{j}}(x) \quad &\mbox{on} \quad \ \Gamma_{in} \times (0,\infty),
	\vspace{1.mm}\\
	{y}(x,0)={y}_0\quad &\mbox{in}\quad\Omega,
	\end{array}\right.
	\end{equation}
	where $$f^{*}=f-\frac{\partial {y}}{\partial t}+\beta {y}- ({v}_s \cdot \nabla){y}-({y} \cdot \nabla){v}_s.$$
	Using \eqref{cofy00} and \eqref{inregy} we obtain that $f^{*}\in L^{2}(0,\infty;H^{-1}(\Omega))$ and the following holds
	\begin{equation}\label{estf*}
	\begin{array}{l}
	\|f^{*}\|_{L^{2}(0,\infty;H^{-1}(\Omega))}\leqslant K(\|f\|_{L^{2}(Q_{\infty})}+\|y_{0}\|_{V^{1}_{0}(\Omega)}).
	\end{array}
	\end{equation}
	Also 
	\begin{equation}\label{regcon}
	\begin{array}{l}
	\sum\limits_{j=1}^{N_{c}}w_{j}(t){{g}_{j}}(x)\in H^{1}(0,\infty;V^{0}(\Gamma)\cap C^{\infty}(\Gamma)),
	\end{array}
	\end{equation}
	($g_{j}\in V^{0}(\Gamma)$ follows from the definition \eqref{basisU0} and Proposition \ref{p2-4} whereas $C^{\infty}(\Gamma)$ regularity follows from Lemma \ref{smgci}). Hence one can use \cite[Theorem IV.5.2]{boy}, \eqref{estf*} and \eqref{esPy} to get $y\in L^{2}(0,\infty;V^{1}(\Omega))$ and the following inequality
	\begin{equation}\label{incregy}
	\begin{array}{l}
	\|y\|_{L^{2}(0,\infty;V^{1}(\Omega))}\leqslant K(\|f\|_{L^{2}(Q_{\infty})}+\|y_{0}\|_{V^{1}_{0}(\Omega)}).
	\end{array}
	\end{equation}
	The regularity \eqref{inregy}, \eqref{incregy} and \eqref{cofy00} furnish $f^{*}\in L^{2}(Q_{\infty})$ and
	\begin{equation}\label{incregf}
	\begin{array}{l}
	\|f^{*}\|_{L^{2}(Q_{\infty})}\leqslant K(\|f\|_{L^{2}(Q_{\infty})}+\|y_{0}\|_{V^{1}_{0}(\Omega)}).
	\end{array}
	\end{equation} 
	In view of \eqref{regcon} and \eqref{incregf} one further obtains that $y\in L^{2}(0,\infty;V^{2}(\Omega))$ (using the regularity result from \cite{Osborn}) and the following
	\begin{equation}\label{finalregy}
	\begin{array}{l}
	\|y\|_{L^{2}(0,\infty;V^{2}(\Omega))}\leqslant K(\|f\|_{L^{2}(Q_{\infty})}+\|y_{0}\|_{V^{1}_{0}(\Omega)}).
	\end{array}
	\end{equation}
	Hence $y\in V^{2,1}(Q_{\infty})$ and using \eqref{inregy} and \eqref{finalregy} one has the following
	\begin{equation}\label{esty1}
	\begin{array}{l}
	\|y\|_{V^{2,1}(Q_{\infty})}\leqslant K(\|y_{0}\|_{V^{1}_{0}(\Omega)}+\|f\|_{L^{2}(Q_{\infty})}).
	\end{array}
	\end{equation}
	Finally, \eqref{esPy} and \eqref{esty1} provides the desired estimate \eqref{esty}.\\
	Since ${g_{j}}\in L^{\infty}(\Gamma),$ the estimate \eqref{dofcon} readily follows by using \eqref{esty}.
\end{proof}
The following result justifies our choice of denoting the inflow and outflow boundary of $v_{s}$ and a perturbation of $v_{s}$ using the same notation.
\begin{corollary}\label{p3.0.2} If we take 
	\begin{equation}\label{smalasum}
	(\|{y}_{0} \|_{{V}^{1}_{0}(\Omega)}+\| {f} \|_{L^{2}(Q_{\infty})})\leqslant \frac{L(1-L)}{2K_{2}},
	\end{equation}
	 where $K_{2}$ is the constant in \eqref{dofcon}, then 
	 $$\|{y}\mid_{\Sigma_{\infty}}\|_{L^{\infty}(\Sigma_{\infty})}\leqslant \frac{L(1-L)}{2}$$ and hence in particular for all $t>0,$ 
	\begin{equation}\label{iny0f}
	\left\{ \begin{array}{ll}
	(e^{-\beta t}{y}(\cdot,t)+{v}_{s})\cdot{n}<0\,\,&\mbox{on}\,\,\Gamma_{in},\\
	(e^{-\beta t}{y}(\cdot,t)+v_{s})\cdot {n}=0\,\,&\mbox{on}\,\, \Gamma_{0},\\
    (e^{-\beta t}{y}(\cdot,t)+{v}_{s})\cdot{n}>0\,\,&\mbox{on}\,\,\Gamma_{out},
	\end{array}\right.
	\end{equation}
	where $(
	y,w_{c}
	)$ is the solution to \eqref{114r}. 
	This means that for all time $t>0$, $\Gamma_{in}$ and $\Gamma_{out}$ are still the inflow and the outflow boundary for the perturbed vector field $(v_{s}+e^{-\beta t}{y}).$
\end{corollary}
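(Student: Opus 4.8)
The plan is to deduce everything from the boundary estimate \eqref{dofcon} together with the explicit form of the target profile $v_s$, so that no new analytical machinery is needed. First I would note that the trace of $y$ on $\Sigma_{\infty}$ coincides with the control: the boundary conditions in \eqref{114r} give $y=\sum_{j}w_j g_j=u$ on $\Gamma_{in}$ and $y=0$ on $\Gamma_0\cup\Gamma_{out}$, while each $g_j$ is supported in $\Gamma_c\subset\Gamma_{in}$, so that $y\mid_{\Sigma_{\infty}}=u$ on all of $\Sigma_{\infty}$. Combining \eqref{dofcon} with the smallness hypothesis \eqref{smalasum} then yields
\begin{equation}\nonumber
\|y\mid_{\Sigma_{\infty}}\|_{L^{\infty}(\Sigma_{\infty})}=\|u\|_{L^{\infty}(\Sigma_{\infty})}\leqslant K_2\left(\|y_0\|_{V^1_0(\Omega)}+\|f\|_{L^2(Q_{\infty})}\right)\leqslant\frac{L(1-L)}{2},
\end{equation}
which is the first assertion of the corollary.

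Next I would verify the three sign conditions \eqref{iny0f} piece by piece, using $v_s=(x_2(1-x_2),0)$ from \eqref{vs}. On $\Gamma_0=\Gamma_b\cup\Gamma_h$ the outward normal is vertical while $v_s$ is horizontal, so $v_s\cdot n=0$, and since $y=0$ there the perturbed field satisfies $(e^{-\beta t}y+v_s)\cdot n=0$. On $\Gamma_{out}=\{d\}\times(0,1)$ the normal is $(1,0)$, hence $v_s\cdot n=x_2(1-x_2)>0$ on the open segment, and again $y=0$, so $(e^{-\beta t}y+v_s)\cdot n>0$. Both cases are immediate because the control is supported away from $\Gamma_0$ and $\Gamma_{out}$, so the perturbation vanishes identically on those portions of the boundary.

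The only case that uses the size estimate is $\Gamma_{in}=\{0\}\times(0,1)$, where $n=(-1,0)$ and $v_s\cdot n=-x_2(1-x_2)$. Here I would split $\Gamma_{in}=\Gamma_c\cup(\Gamma_{in}\setminus\Gamma_c)$. On $\Gamma_{in}\setminus\Gamma_c$ one has $y=0$, hence $(e^{-\beta t}y+v_s)\cdot n=-x_2(1-x_2)<0$ on the open set $x_2\in(0,1)$. On $\Gamma_c=\{0\}\times(L,1-L)$ the quadratic $x_2(1-x_2)$ attains its minimum $L(1-L)$ at the endpoints, so $v_s\cdot n\leqslant -L(1-L)$, while $|e^{-\beta t}y\cdot n|\leqslant e^{-\beta t}\|y\mid_{\Sigma_{\infty}}\|_{L^{\infty}(\Sigma_{\infty})}\leqslant L(1-L)/2$ for $t\geqslant 0$; therefore
\begin{equation}\nonumber
(e^{-\beta t}y+v_s)\cdot n\leqslant -L(1-L)+\frac{L(1-L)}{2}=-\frac{L(1-L)}{2}<0\quad\text{on }\Gamma_c,
\end{equation}
which finishes the verification of \eqref{iny0f}. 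I expect the genuinely delicate point to be not any individual inequality but the bookkeeping that links them: one must exploit that the control basis is supported strictly inside $\Gamma_{in}$, away from the corners $(0,0)$ and $(0,1)$, so that the degeneracy of $v_s\cdot n$ at those corners never has to compete with the perturbation, and observe that the lower bound $L(1-L)$ for $|v_s\cdot n|$ on $\Gamma_c$ is precisely the quantity against which the threshold in \eqref{smalasum} has been calibrated.
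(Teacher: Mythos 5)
Your proof is correct and follows the same route as the paper: the paper dispatches this corollary in one line as a direct consequence of the estimate \eqref{dofcon} from Corollary \ref{c3-8}, and your argument simply spells out the elementary verification (trace of $y$ equals the control, the lower bound $L(1-L)$ for $|v_s\cdot n|$ on $\Gamma_c$, and the vanishing of $y$ on the rest of $\Gamma$) that the paper leaves implicit.
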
	
\begin{proof}
	The proof is a direct consequence of Corollary \ref{c3-8}, in particular the estimate \eqref{dofcon}.
\end{proof}
\section{Stability of the continuity equation}\label{density}
This section is devoted to the study of the transport equation satisfied by density which is modeled by {\eqref{2-2}}$_{1}$ together with \eqref{2-2}$_{2}$ and \eqref{2-2}$_{3}$. This equation is linear in $\sigma$ but nonlinear in $(\sigma,{y}).$ First let us briefly discuss the stabilization of the linearized transport equation modeling the density with zero inflow boundary condition. This will give us an idea about how to obtain analogous results for its nonlinear counterpart. 
\subsection{Comments on the linear transport equation at velocity $v_{s}$}\label{lte}
The linearized continuity equation with the zero inflow boundary condition
is given by
\begin{equation}\label{4-1}
\left\{ \begin{array}{ll}
\displaystyle
\frac{\partial \sigma}{\partial t}+({v}_s \cdot \nabla)\sigma-\beta\sigma=0\quad&\mbox{in}\quad Q_{\infty},
\vspace{1.mm}\\
\displaystyle
\sigma(x,t)=0 \quad &\mbox{on} \quad \Gamma_{in} \times(0,\infty),
\vspace{1.mm}\\
\displaystyle
\sigma(x,0)=\sigma_0\quad&\mbox{in}\quad\Omega.
\end{array}\right.
\end{equation}
We can explicitly solve \eqref{4-1} to obtain
\begin{equation}\label{4-2}
\sigma(x,t)=\left\{ \begin{array}{l}
\displaystyle
e^{\beta t}\sigma_{0}(x_{1}-(x_{2}(1-x_{2}))t,x_{2})\quad \mbox{for} \quad t\leqslant \frac{1}{(x_{2}(1-x_{2}))}x_{1},
\vspace{4.mm}\\
0\quad \mbox{for}\quad t>\displaystyle \frac{1}{(x_{2}(1-x_{2}))}x_{1},
\end{array}\right.
\end{equation}
for all $(x_{1},x_{2})\in\Omega$.
In particular if we assume that $\sigma_{0}$ satisfies the condition \eqref{1-4}, the solution $\sigma$ to \eqref{4-1} vanishes after some finite time $T_{A_{1}}=\frac{d}{A_{1}(1-A_{1})}.$ Hence we see that with zero inflow boundary condition the solution of the linearized transport equation is automatically stabilized (in fact controlled) after some finite time. The equation \eqref{4-1} is just a prototype of the transport equation \eqref{2-2}$_{1,2,3}$ exhibiting similar property and we will discuss this in the following section. 	
\subsection{Stability of the transport equation \eqref{2-2} satisfied by density}\label{nlte}
We consider the transport equation satisfied by the density with the nonlinearity $({y}\cdot\nabla)\sigma.$ We assume that $\|y\|_{V^{2,1}(Q_{\infty})}$ is small enough and the following holds
\begin{equation}\label{cony.n}
\begin{array}{l}
(e^{-\beta t}{y}+{v}_{s})\cdot{n}<0\,\,\mbox{on}\,\,\Gamma_{in},\,\,(e^{-\beta t}{y}+v_{s})\cdot {n}=0\,\,\mbox{on}\,\,\Gamma_{0},\,\,\mbox{and}\,\,(e^{-\beta t}{y}+{v}_{s})\cdot{n}>0\,\, \mbox{on}\,\, \Gamma_{out}.
\end{array}
\end{equation}
Recalling Corollary \ref{p3.0.2}, the condition \eqref{cony.n} is automatically satisfied when $y$ solves \eqref{2-2}$_{4}$-\eqref{2-2}$_{8}$ and \eqref{smalasum} holds. Notice that the role of the condition \eqref{cony.n} is only to guarantee that even if we perturb the vector field $v_{s}$ by adding $e^{-\beta t}{y},$ the inflow boundary of the fluid remains unchanged.
\newline 
Here the transport equation satisfied by the density is given by
\begin{equation}\label{4-3}
\left\{ \begin{array}{ll}
\displaystyle
\frac{\partial \sigma}{\partial t}+(({v}_s+e^{-\beta t}{y}) \cdot \nabla)\sigma-\beta\sigma=0\quad&\mbox{in}\quad Q_{\infty},
\vspace{1.mm}\\
\displaystyle
\sigma (x,t)=0 \quad &\mbox{on} \quad \Gamma_{in} \times(0,\infty),
\vspace{1.mm}\\
\displaystyle
\sigma (x,0)=\sigma_0\quad&\mbox{in}\quad\Omega,
\end{array}\right.
\end{equation}
where ${y}$ is in ${V}^{2,1}(Q_{\infty}),$ \eqref{cony.n} holds, $\sigma_{0}\in L^{\infty}(\Omega)$ and satisfies the condition \eqref{1-4} (recall from \eqref{y0s0} that $\sigma_{0}=\rho_{0}$). Provided $y$ is suitably small in the norm $V^{2,1}(Q_{\infty})$, \eqref{4-1} can be seen as an approximation of \eqref{4-3}, and as we will see in Theorem \ref{t4-1}, solutions of \eqref{4-1} and of \eqref{4-3} share some similar behavior.
\newline
We are in search of a unique solution of \eqref{4-3} in the space $L^{\infty}(Q_{\infty})$. In the following discussion we will borrow several results from \cite{boy} on the existence, uniqueness and stability of the continuity equation. For later use, we shall consider a general transport equation of the form
\begin{equation}\label{4-3-bis}
\left\{ \begin{array}{ll}
\displaystyle
\frac{\partial \sigma}{\partial t}+(v \cdot \nabla)\sigma-\beta\sigma=0\quad&\mbox{in}\quad Q_{\infty},
\vspace{1.mm}\\
\displaystyle
\sigma (x,t)=0 \quad &\mbox{on} \quad \Sigma_{in,v,\infty},
\vspace{1.mm}\\
\displaystyle
\sigma (x,0)=\sigma_0\quad&\mbox{in}\quad\Omega,
\end{array}\right.
\end{equation}
where $v$ is a divergence free vector field in $L^2(0,\infty; V^2(\Omega))$, and 
$$
	\Sigma_{in,v,T} = \{(x,t) \in   \Gamma \times (0,T)\suchthat v(x,t) \cdot n(x) < 0 \}
$$
First let us define the notion of weak solution for the transport equation \eqref{4-3-bis}$_{1}$.
\begin{mydef}\label{dd1}
	Let $T>0$ and $v$ a divergence free vector field such that $v \in L^{2}((0,T); V^2(\Omega))$. A function $\sigma \in L^{\infty}( Q_{T})$ is said to be a weak solution of \eqref{4-3-bis}$_{1}$ if the following is true
	$$ \int\limits_{0}^{T}\int\limits_{\Omega}\sigma(\partial_{t}\phi +v\cdot \nabla \phi+\beta\phi)dxdt=0,$$
	for any test function $\phi \in C^{\infty}(\bar{\Omega}\times [0,T])$ with $\phi (\cdot,T)=0=\phi(\cdot,0)$ in $\Omega$ and $\phi=0$ on $\Sigma_{T}.$ 
\end{mydef}
One can interpret the boundary trace of a weak solution (as defined in Definition \ref{dd1}) of \eqref{4-3-bis}$_{1}$ in a weak sense. Following \cite{boy} we introduce some notations which will be used to define the trace of a weak solution of \eqref{4-3-bis}$_{1}.$\\
Let $m$ denote the boundary Lebesgue measure on $\Gamma.$ Now for any $T>0,$ associated to the vector field $v$, we introduce the measure 
$$d\mu_{v}=(v \cdot {n})dmdt\quad \mbox{on}\quad \Sigma_{T}$$
and denote by $d\mu_{v}^{+}$ (respectively $d\mu_{v}^{-}$) its positive (resp. negative) part in such a way that $|d\mu_{v}| =d\mu_{v}^{+}+d\mu_{v}^{-}.$ The support of $d\mu_{v}^{+}$ (resp. $d\mu_{v}^{-}$) is the outflow (resp. inflow) part of $\Sigma_{T}$ corresponding to the vector field $v$.\\
 The following two theorems, Theorem \ref{trace} and Theorem \ref{l4-2}, are stated in \cite{boy} for a weaker assumption on the velocity field $v.$ Here we state the results with $v\in L^2(0,T; V^{2}(\Omega))$ for the particular equation \eqref{4-3-bis}.
\begin{thm}\label{trace}
\cite[Theorem \textrm{VI.1.3}]{boy} Let $T>0,$ $v\in L^{2}(0,T; V^2(\Omega))$ and $\sigma\in L^{\infty}(Q_{T})$ be a weak solution of \eqref{4-3-bis}$_{1}$ in the sense of the Definition \ref{dd1}. Then the following hold:\\
$(i)$\,The function $\sigma$ lies in $C^{0}([0,T],L^{p}(\Omega))$ for all $1\leqslant p<+\infty.$\\
$(ii)$\,There exists a unique function $\gamma_{\sigma}\in L^{\infty}(\Sigma_{T} ,|d\mu_{v}|)$ such that for any test function $\phi\in C^{0,1}(\bar{Q}_{T})$ and for any $[t_{0},t_{1}]\subset [0,T]$ we have 
\begin{equation}\label{4-4}
\begin{split}
\int\limits_{t_{0}}^{t_{1}}\int\limits_{\Omega}\sigma\left(\frac{\partial \phi}{\partial t}+v\cdot \nabla\phi +\beta \phi \right)dxdt&-\int\limits_{t_{0}}^{t_{1}}\int\limits_{\Gamma}\gamma_{\sigma} \phi d\mu_{v}\\
& +\int\limits_{\Omega}\sigma(t_{0})\phi(t_{0}) dx
-\int\limits_{\Omega}\sigma(t_{1})\phi(t_{1}) dx=0.
\end{split}
\end{equation}	
$(iii)$ The renormalization property: For any function $\xi: \mathbb{R}\to \mathbb{R}$ of class $C^1$, for any $\phi\in C^{0,1}(\bar{Q}_{T})$ and for any $[t_{0},t_{1}]\subset [0,T]$ we have 
\begin{multline}\label{Renormalized}
\int\limits_{t_{0}}^{t_{1}}\int\limits_{\Omega}\xi(\sigma)\left(\frac{\partial \phi}{\partial t}+v\cdot \nabla\phi \right)dxdt
+ \int\limits_{t_{0}}^{t_{1}}\int\limits_{\Omega}\beta \sigma \xi'(\sigma) \phi 
-\int\limits_{t_{0}}^{t_{1}}\int\limits_{\Gamma}\xi(\gamma_{\sigma})\phi d\mu_{v}\\
 +\int\limits_{\Omega}\xi(\sigma(t_{0}))\phi(t_{0}) dx
-\int\limits_{\Omega}\xi(\sigma(t_{1}))\phi(t_{1}) dx=0.
\end{multline}	
\end{thm}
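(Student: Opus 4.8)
The plan is to obtain all three assertions from the DiPerna--Lions renormalization theory \cite{Lio}, adapted to the inflow/outflow boundary as in \cite{boy}. Since $v\in L^{2}(0,T;V^{2}(\Omega))$ is divergence free and $\Omega$ is Lipschitz, and since in dimension two $H^{2}(\Omega)\hookrightarrow W^{1,p}(\Omega)$ for every $p<\infty$, we have $\nabla v\in L^{2}(0,T;L^{p}(\Omega))$ for all $p<\infty$ together with $\mathrm{div}\,v=0$; this is exactly the integrability required for the commutator estimates below. First I would regularize: extend $v$ to a neighborhood of $\bar\Omega$ and mollify to produce smooth divergence free fields $v_{n}\to v$ in $L^{2}(0,T;H^{1}(\Omega))$, and solve the transport equation with field $v_{n}$ and datum $\sigma_{0}$ by the method of characteristics, obtaining a classical solution $\sigma_{n}$ whose inflow/outflow trace $\gamma_{\sigma_{n}}$ on $\Sigma_{T}$ is defined pointwise along the characteristics. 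For $\sigma_{n}$ both the Green identity \eqref{4-4} and the renormalization identity \eqref{Renormalized} hold by the classical chain rule.

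Next I would pass to the limit $n\to\infty$. The maximum principle for the smooth problem gives the uniform bounds $\|\sigma_{n}\|_{L^{\infty}(Q_{T})}\leqslant e^{\beta T}\|\sigma_{0}\|_{L^{\infty}(\Omega)}$ and $\|\gamma_{\sigma_{n}}\|_{L^{\infty}(\Sigma_{T},|d\mu_{v_{n}}|)}\leqslant e^{\beta T}\|\sigma_{0}\|_{L^{\infty}(\Omega)}$, so along a subsequence $\sigma_{n}\rightharpoonup\sigma$ weakly-$*$ in $L^{\infty}(Q_{T})$. The delicate point is the boundary term: rather than passing to the limit in $\gamma_{\sigma_{n}}$ directly against two different measures $|d\mu_{v_{n}}|$, I would regard the product $\gamma_{\sigma_{n}}\,(v_{n}\cdot n)$ as bounded in $L^{2}(\Sigma_{T})$ (using $\gamma_{\sigma_{n}}\in L^{\infty}$ and $v_{n}\cdot n\to v\cdot n$ in $L^{2}(\Sigma_{T})$, which follows from $v_{n}\to v$ in $L^{2}(0,T;H^{1}(\Omega))$ and continuity of the trace), extract a weak limit, and define $\gamma_{\sigma}$ through the identification of this limit with $\gamma_{\sigma}\,(v\cdot n)$. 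Passing to the limit in every term of \eqref{4-4} written for $\sigma_{n}$ then yields \eqref{4-4} for $\sigma$, which is assertion $(ii)$ apart from uniqueness.

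I would then establish the renormalization property $(iii)$ for the limit $\sigma$. The only obstruction is that weak-$*$ convergence does not commute with the nonlinearity $\xi$, so I cannot simply pass to the limit in \eqref{Renormalized} for $\sigma_{n}$. Instead I would mollify in space the weak form of the already-established equation for $\sigma$ and invoke the DiPerna--Lions commutator lemma: since $\nabla v\in L^{2}(0,T;L^{p}(\Omega))$ and $\mathrm{div}\,v=0$, the commutator $[\,v\cdot\nabla,\rho_{\varepsilon}*\,]\sigma$ tends to $0$ in $L^{1}_{\mathrm{loc}}(Q_{T})$ as $\varepsilon\to0$; consequently $\xi(\sigma)$ is itself a weak solution and its boundary trace equals $\xi(\gamma_{\sigma})$, which is precisely \eqref{Renormalized}. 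Uniqueness of $\gamma_{\sigma}$ in $(ii)$ is then read off from \eqref{4-4}: if two $L^{\infty}(\Sigma_{T},|d\mu_{v}|)$ functions satisfy \eqref{4-4} for all admissible $\phi$, their difference integrates to zero against $\phi\,d\mu_{v}$ for a class of $\phi$ dense enough to force equality $|d\mu_{v}|$-a.e. Finally, for $(i)$ I would apply \eqref{Renormalized} with $\phi\equiv 1$ and $\xi(s)=|s|^{p}$ to get, for $[t_{0},t_{1}]\subset[0,T]$, a formula expressing $\|\sigma(t_{1})\|_{L^{p}(\Omega)}^{p}-\|\sigma(t_{0})\|_{L^{p}(\Omega)}^{p}$ as an integral over $(t_{0},t_{1})$ of $L^{1}$-in-time quantities; letting $t_{1}\to t_{0}$ shows $t\mapsto\|\sigma(t)\|_{L^{p}(\Omega)}$ is continuous, and together with the weak continuity of $t\mapsto\sigma(t)$ (immediate from $\partial_{t}\sigma=-\,\mathrm{div}(v\sigma)+\beta\sigma\in L^{2}(0,T;H^{-1}(\Omega))$) this upgrades to strong continuity in $L^{p}(\Omega)$ for every $1\leqslant p<\infty$.

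The hard part will be the boundary analysis underlying $(ii)$: away from $\Gamma$ the interior renormalization and commutator estimate are the classical DiPerna--Lions argument, but realizing $\gamma_{\sigma}$ as a genuine $L^{\infty}(\Sigma_{T},|d\mu_{v}|)$ function and proving it is uniquely determined requires the commutator estimates to survive up to $\Gamma$. Because $\Omega$ is only Lipschitz and $v$ is inflow on part of $\Gamma$, I would localize near the boundary and use mollifiers transported slightly inward along the flow of $v$ (so that the regularization respects the inflow/outflow geometry), which is exactly the construction carried out in \cite[Chapter VI]{boy} and that we invoke here.
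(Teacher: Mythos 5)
First, a remark on the comparison itself: the paper does not prove Theorem \ref{trace} at all --- it is quoted from \cite[Theorem VI.1.3]{boy} (specialized to $v\in L^{2}(0,T;V^{2}(\Omega))$), so there is no internal proof to measure your argument against; the relevant benchmark is Boyer's trace theory. Your sketch is in the right spirit of that theory --- interior renormalization via DiPerna--Lions commutator estimates together with a boundary analysis using mollifiers adapted to the inflow/outflow geometry is indeed how \cite[Chapter VI]{boy} proceeds --- but as organized it has a structural gap.

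The gap is this: Theorem \ref{trace} is a statement about an \emph{arbitrary} weak solution $\sigma\in L^{\infty}(Q_{T})$ of \eqref{4-3-bis}$_{1}$ in the sense of Definition \ref{dd1}; no initial or boundary datum is prescribed. Your first two paragraphs instead \emph{construct} a solution: you mollify $v$, solve along characteristics with a datum $\sigma_{0}$, and pass to the limit. What this yields is the existence of \emph{some} weak solution (a weak-$*$ limit of the $\sigma_{n}$) admitting a trace that satisfies \eqref{4-4}; nothing identifies that limit with the given $\sigma$. To make the identification you would need uniqueness of bounded weak solutions of the initial--boundary value problem, but that uniqueness (Theorem \ref{l4-2}) is itself deduced \emph{from} the renormalization property of Theorem \ref{trace}, so the argument as arranged is circular. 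The repair --- and the route actually taken in \cite{boy} --- is to work directly with the given $\sigma$ throughout: mollify $\sigma$ (not the data), use the commutator lemma to show that $\xi(\sigma)$ solves the interior equation, and then extract both the existence and uniqueness of $\gamma_{\sigma}$ and the time-continuity in $(i)$ from the renormalized equation itself, by testing against functions concentrated near $\Gamma$ and near $\{t=t_{0}\}$. Your third and fourth paragraphs already contain the right ingredients for this direct route; the approximation scheme of the first two paragraphs should be discarded rather than patched. Two smaller points: for $(i)$ the choice $\xi(s)=|s|^{p}$ is not $C^{1}$ when $p=1$, so one should use smooth truncations (or treat $p=2$ and conclude for all $p<\infty$ via the uniform $L^{\infty}$ bound); and the weak limit identification $\gamma_{\sigma_{n}}(v_{n}\cdot n)\rightharpoonup \gamma_{\sigma}(v\cdot n)$ only determines $\gamma_{\sigma}$ $|d\mu_{v}|$-a.e.\ where $v\cdot n\neq 0$, which is consistent with the statement but should be said explicitly.
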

The following theorem states some results on the well posedness of the weak solution $\sigma$ of the Cauchy-Dirichlet transport problem \eqref{4-3-bis}. 
\begin{thm}\label{l4-2}
	\cite[Theorem \textrm{VI.1.6}]{boy} Let $T>0,$ $\sigma_{0}\in L^{\infty}(\Omega)$ and $v\in L^{2}(0,T; V^2(\Omega))$. Then there exists a unique function $\sigma\in L^{\infty}(Q_{T})$ such that\\ 
	(i) The function $\sigma$ is a weak solution of the problem \eqref{4-3-bis}$_{1}$ in $Q_{T}$ in the sense of Definition \ref{dd1}.\\[1.mm] 
	(ii) The trace $\gamma_{\sigma}$ of $\sigma$ satisfies the inflow boundary condition,
	$\gamma_{\sigma}=0,$ $d\mu^{-}_{v}$ almost everywhere on $\Sigma_{in,v,T}$ and $\sigma$ satisfies the initial condition $\sigma(x,0)=\sigma_{0}$ in $\Omega$.\\
    In the following, we call this function $\sigma$ satisfying $(i)$ and $(ii),$ the solution of \eqref{4-3-bis}.\\[1.mm]
	(iii) Moreover for $0<t<T,$ the solution $\sigma$ of \eqref{4-3-bis} satisfies 
	\begin{equation}\label{esorho}
	\begin{array}{l}
	\|\sigma(\cdot,t)\|_{L^{\infty}(\Omega)}\leqslant \|\sigma_{0}\|_{L^{\infty}(\Omega)}e^{\beta t}.
	\end{array}
	\end{equation}
	\end{thm}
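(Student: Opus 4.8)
The plan is to obtain existence together with the bound \eqref{esorho} by an approximation argument, and to obtain uniqueness from the renormalisation property \eqref{Renormalized} of Theorem \ref{trace}. The guiding observation is that along a smooth characteristic curve $X$ of the drift, $\dot X(s)=v(X(s),s)$, a solution of \eqref{4-3-bis}$_1$ formally satisfies $\tfrac{d}{ds}\sigma(X(s),s)=\beta\,\sigma(X(s),s)$, so that $\sigma$ grows at most like $e^{\beta s}$ and vanishes on every characteristic issued from the inflow boundary; this is precisely the mechanism producing the factor $e^{\beta t}$ in \eqref{esorho}.

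For existence I would regularise the data. Choose divergence free $v_{\delta}\in C^{\infty}(\overline{Q_T})$ with $v_{\delta}\to v$ in $L^{2}(0,T;V^{2}(\Omega))$ and with $v_\delta\cdot n$ of the same sign as $v\cdot n$ on each of $\Gamma_{in},\Gamma_0,\Gamma_{out}$, and $\sigma_0^{\delta}\in C^{\infty}(\overline\Omega)$ with $\|\sigma_0^{\delta}\|_{L^{\infty}(\Omega)}\le\|\sigma_0\|_{L^{\infty}(\Omega)}$. Since $v_\delta$ is Lipschitz its flow is well defined and the regularised problem is solved classically by the method of characteristics; the characteristic computation above gives the uniform bound $\|\sigma_{\delta}(\cdot,t)\|_{L^{\infty}(\Omega)}\le\|\sigma_0\|_{L^{\infty}(\Omega)}e^{\beta t}$. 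Extracting a weak-$*$ limit $\sigma\in L^{\infty}(Q_T)$ and passing to the limit in the weak formulation of Definition \ref{dd1} yields a weak solution: the only nontrivial term is $\sigma_{\delta}\,(v_{\delta}\cdot\nabla\phi)$, which converges because $\sigma_{\delta}\rightharpoonup^{*}\sigma$ in $L^{\infty}(Q_T)$ while $v_{\delta}\cdot\nabla\phi\to v\cdot\nabla\phi$ strongly in $L^{1}(Q_T)$. The bound \eqref{esorho} survives by weak-$*$ lower semicontinuity, and Theorem \ref{trace} furnishes the trace $\gamma_{\sigma}\in L^{\infty}(\Sigma_T,|d\mu_v|)$; an analogous limiting argument in the trace identity \eqref{4-4} shows that the homogeneous inflow condition $\gamma_{\sigma}=0$ on $\Sigma_{in,v,T}$ is inherited from the $\sigma_{\delta}$.

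Uniqueness is where the renormalisation identity \eqref{Renormalized} is essential. Given two solutions, their difference $w$ solves \eqref{4-3-bis} with $w(\cdot,0)=0$ and $\gamma_w=0$ on $\Sigma_{in,v,T}$. Applying \eqref{Renormalized} with $\xi(s)=s^{2}$ and the constant test function $\phi\equiv 1$, the interior advection term drops out and one is left with
\[
\int_{\Omega}w(t_1)^{2}\,dx=\int_{\Omega}w(t_0)^{2}\,dx+2\beta\int_{t_0}^{t_1}\!\!\int_{\Omega}w^{2}\,dx\,dt-\int_{t_0}^{t_1}\!\!\int_{\Gamma}\gamma_w^{2}\,d\mu_v .
\]
Splitting $d\mu_v=d\mu_v^{+}-d\mu_v^{-}$ into its outflow and inflow parts, the inflow contribution vanishes since $\gamma_w=0$ there, while the outflow contribution has the favourable sign; hence $E(t):=\int_{\Omega}w(t)^{2}\,dx$ obeys $E(t_1)\le E(t_0)+2\beta\int_{t_0}^{t_1}E(s)\,ds$. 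As $E(0)=0$, Gr\"onwall's lemma forces $E\equiv 0$, so $w=0$ and the solution is unique.

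The genuinely delicate analytic input is not in the steps above but in Theorem \ref{trace} itself: constructing the boundary trace and establishing the renormalisation property at an inflow boundary for a drift that is only $L^{2}(0,T;V^{2}(\Omega))$, hence not Lipschitz in space (in two dimensions $H^{2}\not\hookrightarrow W^{1,\infty}$), requires the DiPerna--Lions type commutator estimates adapted to the boundary in \cite{boy}. Because that result is taken as given here, the remaining argument uses only the weak formulation and the quadratic renormalisation, both of which are stable under the low regularity of $v$; this is exactly why the limit problem can be treated without recourse to classical characteristics.
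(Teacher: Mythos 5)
The paper does not prove Theorem \ref{l4-2}: it is imported verbatim from \cite[Theorem VI.1.6]{boy}, so there is no internal proof to compare yours against. Judged on its own terms, your outline follows the standard route of the DiPerna--Lions/Boyer theory, and the two pillars you use are the right ones: existence via regularisation of the velocity field together with a uniform $L^{\infty}$ bound coming from the characteristics of the smooth approximations, and uniqueness via the renormalisation identity \eqref{Renormalized} with $\xi(s)=s^{2}$ and $\phi\equiv 1$. The uniqueness computation is correct as written: linearity of the trace identity \eqref{4-4} gives $\gamma_{w}=\gamma_{\sigma_{1}}-\gamma_{\sigma_{2}}=0$ $d\mu_{v}^{-}$-a.e., the outflow term enters with a favourable sign, and Gr\"onwall finishes; the bound \eqref{esorho} for a single solution can be obtained the same way or by weak-$*$ lower semicontinuity from your approximation.

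The step you should not pass over so quickly is the recovery of the inflow condition in the limit $\delta\to 0$. The traces $\gamma_{\sigma_{\delta}}$ live in $L^{\infty}(\Sigma_{T},|d\mu_{v_{\delta}}|)$ with respect to measures that change with $\delta$, and the inflow sets $\Sigma_{in,v_{\delta},T}$ need not converge to $\Sigma_{in,v,T}$ in any useful sense; in particular, your requirement that $v_{\delta}\cdot n$ keep the sign of $v\cdot n$ on prescribed boundary pieces is not achievable for a general divergence-free $v\in L^{2}(0,T;V^{2}(\Omega))$, whose normal trace may change sign inside a single component of $\Gamma$ (the decomposition into $\Gamma_{in},\Gamma_{0},\Gamma_{out}$ is a feature of the specific perturbed Poiseuille field in the paper, not of the general statement). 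Passing the zero-inflow condition to the limit is precisely the content of the stability result Lemma \ref{l4-4} (Theorem VI.1.9 of \cite{boy}), which is a theorem in its own right rather than ``an analogous limiting argument.'' The standard fix is to encode the zero-inflow condition at the limit by testing against Lipschitz functions required to vanish only on the outflow part of $\Sigma_{T}$ --- a formulation that is stable under the convergences you do have --- and then to identify the limit with the solution characterised by (i)--(ii) using Theorem \ref{trace}. With that point either delegated explicitly to \cite{boy} or argued along these lines, your sketch is a faithful reconstruction of the cited result.
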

	
	Let us also recall, for later purpose, the following stability result for the transport equation with respect to its velocity field:
	\begin{lem}\label{l4-4}
	\cite[Theorem \textrm{VI.1.9}]{boy}	Let $T>0.$ Suppose that $\sigma_{0}\in L^{\infty}(\Omega)$ and let $\{{v}_{m}\}_{m}$ be a sequence of functions in ${L}^{2}(0,T; V^2(\Omega))$ such that there exists $v \in L^2(0,T; V^2(\Omega))$ such that 
		$${v}_{m} \xrightarrow[m\rightarrow \infty]{} {v}\quad\mbox{in}\quad L^{1}(Q_{T}),
		\, \text{ and } \, 
			v_m \cdot n \xrightarrow[m\rightarrow \infty]{} {v.n} \quad\mbox{in}\quad L^{1}(\Sigma_{T}).
		$$
		Now suppose that $\sigma_{m}\in L^{\infty}(Q_{T})$ is the unique weak solution (in sense of Definition \ref{dd1}.) of the following initial and boundary value problem 
		\begin{equation}\label{4-12}
		\left\{ \begin{array}{ll}
		\displaystyle
		\frac{\partial \sigma_{m}}{\partial t}+(v_m \cdot \nabla)\sigma_{m}- \beta\sigma_{m} = 0\quad&\mbox{in}\quad Q_{T},
		\vspace{1.mm}\\
		\sigma_{m} (x,t)=0 \quad& \mbox{on}\quad \Sigma_{in,v_{m},T},
		\vspace{1.mm}\\
		\sigma_{m} (x,0)=\sigma_0\quad &\mbox{in}\quad\Omega.
		\end{array}\right.
		\end{equation}
		If we denote by $\sigma$ the unique solution to the transport problem \eqref{4-3-bis} in $Q_{T},$ then we have
		\begin{equation}\label{4-13}
		\begin{array}{l}
		\sigma_{m}\xrightarrow[m\rightarrow \infty]{} \sigma\quad\mbox{in}\quad C^{0}([0,T],L^{p}(\Omega)),\quad\mbox{for any}\quad 1\leqslant p<+\infty.
		\end{array}
		\end{equation} 			
	\end{lem}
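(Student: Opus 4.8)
The plan is to follow the DiPerna--Lions renormalization strategy adapted to the inflow setting of \cite{boy}.

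\textbf{Step 1 (uniform bound and weak-$*$ limit).} Applying the estimate \eqref{esorho} of Theorem \ref{l4-2} to each $\sigma_{m}$ gives the uniform bound $\|\sigma_{m}\|_{L^{\infty}(Q_{T})}\leqslant \|\sigma_{0}\|_{L^{\infty}(\Omega)}e^{\beta T}$, so up to a subsequence $\sigma_{m}\rightharpoonup \widetilde{\sigma}$ weakly-$*$ in $L^{\infty}(Q_{T})$. I would then pass to the limit in the weak formulation of Definition \ref{dd1} written for $\sigma_{m}$ and $v_{m}$. The only nontrivial term is $\int_{0}^{T}\int_{\Omega}\sigma_{m}\,v_{m}\cdot\nabla\phi$: splitting $\sigma_{m}v_{m}=\sigma_{m}(v_{m}-v)+\sigma_{m}v$, the first piece tends to $0$ thanks to the uniform $L^{\infty}$ bound on $\sigma_{m}$ together with $v_{m}\to v$ in $L^{1}(Q_{T})$, while the second converges by weak-$*$ convergence tested against $v\cdot\nabla\phi\in L^{1}(Q_{T})$. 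Hence $\widetilde{\sigma}$ is a weak solution of \eqref{4-3-bis}$_{1}$ for the velocity $v$.

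\textbf{Step 2 (boundary condition and identification of the limit).} Next I would verify that $\widetilde{\sigma}$ inherits the vanishing inflow trace on $\Sigma_{in,v,T}$. This is the delicate point: one invokes the trace theory of Theorem \ref{trace} and uses the convergence $v_{m}\cdot n\to v\cdot n$ in $L^{1}(\Sigma_{T})$, which forces $d\mu_{v_{m}}\to d\mu_{v}$; since $\gamma_{\sigma_{m}}=0$ on each inflow set $\Sigma_{in,v_{m},T}$, a limit analysis of the boundary terms in the representation \eqref{4-4} yields $\gamma_{\widetilde{\sigma}}=0$ holding $d\mu_{v}^{-}$-almost everywhere on $\Sigma_{in,v,T}$. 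Thus $\widetilde{\sigma}$ is a weak solution of \eqref{4-3-bis} with the correct initial datum $\sigma_{0}$ and zero inflow trace, so the uniqueness part of Theorem \ref{l4-2} forces $\widetilde{\sigma}=\sigma$. As the limit is independent of the subsequence, the whole sequence satisfies $\sigma_{m}\rightharpoonup\sigma$ weakly-$*$.

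\textbf{Step 3 (upgrade to strong convergence).} To obtain \eqref{4-13} it suffices to treat $p=2$ and then interpolate with the uniform $L^{\infty}$ bound to cover all $1\leqslant p<+\infty$. Here I would use the renormalization identity \eqref{Renormalized} with $\xi(s)=s^{2}$. Because $\mathrm{div}\,v_{m}=0$, the function $w_{m}:=\sigma_{m}^{2}$ is again a bounded weak solution of a transport problem of the form \eqref{4-3-bis}$_{1}$, with $\beta$ replaced by $2\beta$, with initial datum $\sigma_{0}^{2}$ and vanishing inflow trace $\gamma_{\sigma_{m}}^{2}$. Repeating Steps~1--2 for $\{w_{m}\}$ (the uniqueness of Theorem \ref{l4-2} applies verbatim to this coefficient) identifies its weak-$*$ limit as the unique such solution, which by the renormalization of $\sigma$ is exactly $\sigma^{2}$; hence $\sigma_{m}^{2}\rightharpoonup\sigma^{2}$ weakly-$*$. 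Testing this convergence against functions of $t$ alone and using the continuity $\sigma_{m},\sigma\in C^{0}([0,T],L^{2}(\Omega))$ from Theorem \ref{trace}$(i)$, I would deduce $\|\sigma_{m}(\cdot,t)\|_{L^{2}(\Omega)}\to\|\sigma(\cdot,t)\|_{L^{2}(\Omega)}$; combined with the weak convergence $\sigma_{m}(\cdot,t)\rightharpoonup\sigma(\cdot,t)$ in the Hilbert space $L^{2}(\Omega)$ this yields strong convergence for each $t$, and the uniform-in-time modulus of continuity coming from \eqref{esorho} and the equation promotes it to convergence in $C^{0}([0,T],L^{2}(\Omega))$.

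\textbf{Main obstacle.} The principal difficulty is Step~2, i.e.\ the stability of the zero inflow boundary condition under the perturbation of the advecting field. The inflow sets $\Sigma_{in,v_{m},T}$ vary with $m$ and only the $L^{1}(\Sigma_{T})$ convergence of $v_{m}\cdot n$ is available, so one cannot simply pass to the trace; the argument must exploit the renormalized-trace structure of \cite{boy} and the splitting $|d\mu_{v}|=d\mu_{v}^{+}+d\mu_{v}^{-}$ to control the boundary contributions in the limit. This is precisely the content of \cite[Theorem \textrm{VI.1.9}]{boy}, on which we ultimately rely.
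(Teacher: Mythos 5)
First, a remark on the comparison itself: the paper does not prove Lemma \ref{l4-4} at all --- it is imported verbatim as \cite[Theorem VI.1.9]{boy}, so there is no ``paper's own proof'' to measure your argument against. Your sketch follows the strategy by which this result is actually established in the reference (DiPerna--Lions renormalization adapted to the inflow-trace framework: uniform $L^\infty$ bound, weak-$*$ limit, identification via uniqueness, and the $\xi(s)=s^2$ trick to convert weak into strong convergence), and Steps 1 and 3 have the correct architecture. In particular the observation that $\sigma_m^2$ solves the same problem with $\beta$ replaced by $2\beta$, initial datum $\sigma_0^2$ and vanishing inflow trace, so that weak-$*$ convergence of $\sigma_m^2$ to $\sigma^2$ plus weak convergence of $\sigma_m$ upgrades to strong $L^2$ convergence, is exactly the right mechanism.

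There are, however, two places where the argument is asserted rather than proved, and the first of them is the entire content of the theorem. (a) In Step 2 you need to pass to the limit in the boundary term $\int\gamma_{\sigma_m}\phi\,d\mu_{v_m}$ of \eqref{4-4} and conclude that the limit measure is carried by the outflow part of $\Sigma_T$. The workable route is: since $\gamma_{\sigma_m}=0$ $d\mu_{v_m}^-$-a.e., one has $|\gamma_{\sigma_m}\,d\mu_{v_m}|\leqslant C\,d\mu_{v_m}^+$ with $C=\sup_m\|\sigma_m\|_{L^\infty(Q_T)}$; the $L^1(\Sigma_T)$ convergence of $v_m\cdot n$ gives $d\mu_{v_m}^{\pm}\to d\mu_v^{\pm}$ in total variation (because $s\mapsto s^{\pm}$ is $1$-Lipschitz), so any weak-$*$ limit $\lambda$ of $\gamma_{\sigma_m}\,d\mu_{v_m}$ satisfies $|\lambda|\leqslant C\,d\mu_v^+$ and is therefore concentrated off the inflow set; finally the uniqueness of the trace in Theorem \ref{trace}$(ii)$ identifies $\lambda=\gamma_{\widetilde\sigma}\,d\mu_v$, whence $\gamma_{\widetilde\sigma}=0$ $d\mu_v^-$-a.e. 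None of this is in your text, and your closing paragraph explicitly defers this step to \cite[Theorem VI.1.9]{boy} --- i.e.\ to the statement being proved --- which makes the proposal circular at its critical point. (b) In Step 3, passing from $\|\sigma_m(\cdot,t)\|_{L^2}^2\to\|\sigma(\cdot,t)\|_{L^2}^2$ in the weak-$*$ sense in $t$ to convergence for every $t$, and then to uniformity in $t$, requires an equicontinuity-in-time argument (e.g.\ $\partial_t\sigma_m$ bounded in $L^2(0,T;H^{-1}(\Omega))$ via the equation, plus Arzel\`a--Ascoli in a weak topology); the estimate \eqref{esorho} is a pointwise $L^\infty$ bound and supplies no modulus of continuity in time, so the phrase ``the uniform-in-time modulus of continuity coming from \eqref{esorho}'' does not stand as written. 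The same time-compactness is already needed in Step 2 to handle the terms $\int_\Omega\sigma_m(t_1)\phi(t_1)\,dx$ in \eqref{4-4}. With (a) and (b) filled in, the sketch becomes a genuine proof; as it stands it is an accurate roadmap of the argument in \cite{boy} rather than a self-contained derivation.
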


Now we state the main theorem of this section:
\begin{thm}\label{t4-1}
   Let $A_{1}\in(0,\frac{1}{2})$ and $T_{1}>T_{A_{1}}=\frac{d}{A_{1}(1-A_{1})}.$ There exists a constant $K_{3}>0$ such that if $y\in V^{2,1}(Q_{\infty})$ satisfies
   \begin{equation}\label{yK3}
   \begin{array}{l}
	\|{y}\|_{{V}^{2,1}(Q_{\infty})}<K_{3},
	\end{array}
	\end{equation}
	\eqref{cony.n} holds, ${\sigma}_{0}\in L^{\infty}(\Omega)$ and satisfies the condition \eqref{1-4}, the solution $\sigma$ of equation \eqref{4-3} satisfies the following
	\begin{equation}\label{fiessig}
	\begin{array}{l}
	(i)\, \forall t<T_{1},\,\, \sigma(\cdot,t)\,\mbox{satisfies the estimate }\,\eqref{esorho},\\
	(ii)\, \forall t\geqslant T_{1},\,\, \|\sigma(\cdot,t)\|_{L^{\infty}(\Omega)}=0.
	\end{array}
	\end{equation}
\end{thm}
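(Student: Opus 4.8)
Part $(i)$ is essentially immediate. Writing $v = v_s + e^{-\beta t} y$, the field $v$ is divergence free (both $v_s$ and $y\in V^{2,1}(Q_\infty)$ are), and since $v_s$ is smooth and $y\in L^2(0,\infty;V^2(\Omega))$ we have $v\in L^2(0,T;V^2(\Omega))$ for every $T>0$. By \eqref{cony.n} the inflow set is $\Sigma_{in,v,\infty}=\Gamma_{in}\times(0,\infty)$, so \eqref{4-3} is an instance of \eqref{4-3-bis}, and Theorem \ref{l4-2} produces a unique $\sigma\in L^\infty(Q_\infty)$ satisfying the bound \eqref{esorho} for every $t$, which gives $(i)$.

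For $(ii)$ the plan is to pass to the Lagrangian description. I would first set up the flow $X$ of $v$, solving $\partial_t X(t,0,x)=v(X(t,0,x),t)$ with $X(0,0,x)=x$. In dimension two $H^2(\Omega)$ embeds into the space of log-Lipschitz (hence Osgood-continuous) functions, but \emph{not} into $W^{1,\infty}(\Omega)$, so $v(\cdot,t)$ is only log-Lipschitz in space, with modulus controlled by $\|v_s\|_{W^{1,\infty}(\Omega)}+\|y(\cdot,t)\|_{H^2(\Omega)}\in L^2(0,T_1)\subset L^1(0,T_1)$. I would therefore invoke the Osgood criterion (\cite{zuazua}, \cite[Theorem 3.7]{Bahouri-Chemin-Danchin}) to obtain existence, uniqueness and continuous dependence of the flow on $[0,T_1]$; by \eqref{cony.n} the trajectories stay in $\overline\Omega$, since $v\cdot n=0$ on $\Gamma_0$ and $v\cdot n<0$ on $\Gamma_{in}$ prevent a trajectory from leaving other than through $\Gamma_{out}$.

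The core estimate compares $X$ to the explicit flow $\overline X$ of $v_s$, for which $\overline X_2$ is constant and $\overline X_1(t,0,x)=x_1+x_2(1-x_2)t$. Setting $\delta(t)=X(t,0,x)-\overline X(t,0,x)$ and using that $v_s$ is globally Lipschitz, Gronwall's inequality gives
\[
\sup_{t\in[0,T_1]}|\delta(t)| \;\le\; e^{LT_1}\int_0^{T_1}e^{-\beta s}\,\|y(\cdot,s)\|_{L^\infty(\Omega)}\,ds \;\le\; C(T_1)\,\|y\|_{V^{2,1}(Q_\infty)},
\]
where $L$ is the Lipschitz constant of $v_s$ and I used $L^2(0,T_1;H^2(\Omega))\hookrightarrow L^1(0,T_1;L^\infty(\Omega))$. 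For $x\in\mathrm{supp}(\sigma_0)\subset[0,d]\times(A_1,1-A_1)$, taking $K_3$ small keeps $X_2(\cdot,0,x)$ inside $(A_1/2,\,1-A_1/2)$, so the horizontal speed $X_2(1-X_2)$ stays above $A_1(1-A_1)-\varepsilon$; integrating yields $X_1(t,0,x)\ge (A_1(1-A_1)-\varepsilon)\,t-C(T_1)\|y\|_{V^{2,1}(Q_\infty)}$. Since $T_1>T_{A_1}=d/(A_1(1-A_1))$, shrinking $K_3$ further forces $X_1(T_1,0,x)>d$, so every particle initially in $\mathrm{supp}(\sigma_0)$ has left $\Omega$ through $\Gamma_{out}$ at a time $<T_1$.

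Finally I would translate this back to $\sigma$. Along any characteristic the renormalization identity \eqref{Renormalized} (equivalently $\tfrac{d}{dt}\sigma(X(t),t)=\beta\sigma$) shows that $\sigma$ is transported up to the factor $e^{\beta t}$, while the inflow trace condition $\gamma_\sigma=0$ on $\Gamma_{in}$ from Theorem \ref{l4-2}$(ii)$ annihilates $\sigma$ on every characteristic entering through $\Gamma_{in}$. Fixing $t\ge T_1$ and $x\in\Omega$ and tracing the characteristic backwards, either it meets $\Gamma_{in}$ at some time in $(0,t)$, giving $\sigma(x,t)=0$, or it reaches time $0$ at a foot point $x^0\in\Omega$ with $x=X(t,0,x^0)$, which by the exit property forces $x^0\notin\mathrm{supp}(\sigma_0)$ and hence $\sigma(x,t)=e^{\beta t}\sigma_0(x^0)=0$. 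Thus $\|\sigma(\cdot,t)\|_{L^\infty(\Omega)}=0$ for $t\ge T_1$, proving $(ii)$. The \emph{main obstacle} is precisely the low regularity of the velocity: since $v$ is only log-Lipschitz in space, uniqueness of trajectories and the rigorous identification of the renormalized $L^\infty$ weak solution of Theorem \ref{l4-2} with its characteristic representation must rest on the Osgood condition rather than on Cauchy--Lipschitz theory. If one prefers to avoid the explicit characteristic representation, I would instead approximate $v$ by smooth divergence-free fields $v_m$, run the exit-time argument for their classical flows, and pass to the limit using the stability Lemma \ref{l4-4}.
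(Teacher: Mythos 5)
Your proposal is essentially correct and captures the two key ingredients of the paper's proof: part $(i)$ via Theorem \ref{l4-2}$(iii)$, and for part $(ii)$ the Gronwall comparison between the flow of $v_s+e^{-\beta t}y$ and the explicit flow of $v_s$ (this is exactly Lemma \ref{ld3} in the paper, which likewise extends the fields to $\R^2$ and invokes the Osgood condition). Where you diverge is in how the geometric exit-time fact is converted into $\sigma(\cdot,T_1)=0$: you use the pointwise Lagrangian representation $\sigma(x,t)=e^{\beta t}\sigma_0(X(x,0,t))$ (or $0$ along characteristics entering through $\Gamma_{in}$), whereas the paper deliberately avoids this. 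Instead it transports a smooth cutoff $\vartheta$ (equal to $0$ on $\mathrm{supp}(\sigma_0)$ and $1$ outside a slightly larger box) by the flow to get a Lipschitz function $\Psi$, forms the localized energy $E_{loc}(t)=\frac12\int_\Omega\Psi|\sigma|^2$, and applies the renormalization identity \eqref{Renormalized} with $\xi(s)=s^2$ to get $E_{loc}(T_1)\leqslant e^{2\beta T_1}E_{loc}(0)=0$; the flow estimate then shows $\Psi(\cdot,T_1)\equiv 1$ on $\Omega$, i.e.\ a backward rather than forward exit argument. This duality-type argument only requires $\Psi$ to be an admissible Lipschitz test function and never identifies the renormalized $L^\infty$ weak solution with a characteristic formula. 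Your primary route does require that identification for a merely log-Lipschitz velocity, which is not supplied by the cited references and is a genuine gap as written --- but you correctly flag it, and your fallback (approximate $y$ by fields in $V^{2,1}\cap L^2(0,T_1;W^{1,\infty})$, run the argument for classical flows, pass to the limit with Lemma \ref{l4-4}) is precisely the two-step structure the paper uses, since even its test-function argument needs $y\in L^2(0,T_1;W^{1,\infty}(\Omega))$ for $\Psi$ to be Lipschitz. Two minor remarks: the paper observes that \eqref{cony.n} plays no role in this part (extending everything to $\R^2$ removes any need to track whether trajectories re-enter $\Omega$, which your forward-exit formulation implicitly relies on), and in the smooth case you should still justify that the unique weak solution of Theorem \ref{l4-2} coincides with the characteristics formula, a standard but nontrivial step that the renormalization argument bypasses.
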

\begin{proof}[Proof of Theorem \ref{t4-1}]
	 Item $(i)$ of \eqref{fiessig} is automatically satisfied as a consequence of item $(iii),$ Theorem \ref{l4-2}. 
	 \newline
	 We thus focus on the proof of item $(ii)$ of Theorem \ref{t4-1}. Let $T_{1}>T_{A_{1}}=\frac{d}{A_{1}(1-A_{1})}$ be fixed. Our approach will be based on the flow $X$ corresponding to the vector field  $v_{s}+e^{-\beta t}{y}$. In order to introduce it in a more convenient manner, we first extend the domain into $\R^2$.
	Observe that the definition of $v_{s}$ can be naturally extended to $\R^2$ into a Lipschitz function by setting $v_s(x_1,x_2) = v_s(x_2)$ if $x_2 \in (0,1)$ and $0$ if $x_2 \in \R \setminus(0,1)$. We denote this extension by $v_{s}$ itself. For the following analysis we use the functional space $$H^{2,1}(\R^2 \times (0,\infty))=L^{2}(0,\infty;H^{2}(\R^2)\cap H^{1}(0,\infty;L^{2}(\R^2))$$
	(this is consistent with the notations defined in Section \ref{funcframe}). Now we introduce an extension operator $\mathbb{E}$ from $\Omega$ to $\R^2$.
	$$
	\mathbb{E}:L^2(\Omega)\longrightarrow L^2(\R^2)
	$$
	such that: 
	\begin{itemize}
		\item for every $y\in L^{2}(\Omega),$ $\mathbb{E}y\mid_{\Omega}=y$, 	
		\item the restriction of $\mathbb{E}$ to $H^2(\Omega)$ defines a linear operator from $H^2(\Omega)$ to $H^2(\R^2)$,
		\item the restriction of $\mathbb{E}$ to $H^2(\Omega)\cap W^{1,\infty} (\Omega)$ defines a linear operator from $H^2(\Omega)\cap W^{1,\infty} (\Omega)$ to $H^2(\R^2)\cap W^{1,\infty} (\R^2)$,		 
%
	\end{itemize}
	 The existence of such an extension operator is a direct consequence of \cite[Theorem 2.2]{liomag}. 
	\\
	We now introduce the flow $X(x,t,s)$ defined for $x \in \R^2$ and  $(t,s)\in[0,\infty)^{2},$ by the following differential equation:
	\begin{equation}\label{4-5}
	\left\{ \begin{array}{l}
	\displaystyle
	\frac{\partial X(x,t,s)}{\partial t}=(v_{s}+e^{-\beta t}\mathbb{E}{y})(X{(x,t,s)},t),
	\vspace{1.mm}\\
	X(x,t,s)\suchthat_{t=s}=x\in \R^2.
	\end{array}\right.
	\end{equation}
    The integral formulation of \eqref{4-5} can be written as follows
	\begin{equation}\label{4-6}
	\begin{array}{l}
	\displaystyle
	\forall (x,t,s) \in \R^2 \times [0,\infty)^{2}, 
	\quad X(x,t,s)=x+\int\limits_{s}^{t}(v_{s}+e^{-\beta t}\mathbb{E}{y})(X(x,\theta,s),\theta)d\theta.
	\end{array}
	\end{equation}
	As the vector field
	$$(v_{s}+e^{-\beta t}\mathbb{E}{y})\in L^2(0,\infty; {W}^{1,\infty}(\R^2)) + {H}^{2,1}(\R^2\times(0,\infty)),$$  
	due to the Osgood condition (see \cite{zuazua} and \cite[Theorem 3.7]{Bahouri-Chemin-Danchin}) we know that equation \eqref{4-6} has a unique continuous solution. 
	Similarly, we introduce the flow $X_0$ corresponding to the vector field $v_{s}$ as the solution of the following differential equation:
	\begin{equation}\label{4-7}
	\left\{ \begin{array}{l}
	\displaystyle
	\frac{\partial X_{0}(x,t,s)}{\partial t}=v_{s}(X_{0}{(x,t,s)},t),
	\vspace{1.mm}\\
	X_{0}(x,t,s)\suchthat_{t=s}=x\in\R^2.
	\end{array}\right.
	\end{equation}
	As $v_{s}$ is Lipschitz, the flow, which can also be seen as the solution of 
	\begin{equation}\label{4-8}
	\begin{array}{l}
	\displaystyle
	X_{0}(x,t,s)=x+\int\limits_{s}^{t}v_{s}(X_{0}(x,\theta,s),\theta)d\theta, \quad (x,t,s) \in \R^2 \times (0,\infty)^2,
	\end{array}
	\end{equation}
	is well defined in classical sense.
	\begin{lem}\label{ld3}
		 Let $T>0.$ There exists a constant $K_{4}=K_{4}(T)>0$ such that for all $y\in V^{2,1}(Q_\infty),$ $(t,s)\in[0,T]^{2}$ and $x\in \R^2,$ the solutions of \eqref{4-5} and \eqref{4-7} satisfy the following
		\begin{equation}\label{4-9}
		\begin{array}{l}
		\mid X(x,t,s)-X_{0}(x,t,s)\mid < K_{4}(T)\|{y}\|_{{V}^{2,1}(Q_{\infty})}.
		\end{array}
		\end{equation}
	\end{lem}
	\begin{proof}
		The proof of Lemma \ref{ld3} can be performed by using arguments which are very standard in the literature. For the convenience of the reader we include the proof. \\ 
		{\bf 1.} 
		As $H^{2}(\R^2)$ is embedded in $L^{\infty}(\R^2),$ using H\"{o}lder's inequality we can at once obtain the following estimate for all $(t,s)\in[0,T]^{2}$ and $x\in\R^2$,
		$$	\left|\int\limits_{s}^{t}e^{-\beta \theta} \mathbb{E}{y}(X(x,\theta,s),\theta) d\theta\right| \leqslant {K}\|\mathbb{E}{y}\|_{{H}^{2,1}(\R^2\times(0,\infty))},$$ for some constant $K>0.$
		\vspace{1.mm}\\
		{\bf 2.} Subtracting \eqref{4-6} from \eqref{4-8}, we get, for all $(t,s)\in[0,\infty)^{2}$ and $x\in\R^2$,
		\begin{equation}\nonumber
		\begin{aligned}
		|X(x,t,s)-X_{0}(x,t,s)|  & \leqslant \left|\int\limits_{s}^{t} |v_{s}(X(x,\theta,s),\theta)-v_{s}(X_{0}(x,\theta,s),\theta)| d\theta\right| + \left|\int\limits_{s}^{t}e^{-\beta \theta} |\mathbb{E}{y}(X(x,\theta,s),\theta)|d\theta\right|\\
		& \leqslant \|\nabla v_{s}(.) \|_{L^{\infty}(\Omega)} \left|\int\limits_{s}^{t} |X(x,\theta,s)-X_{0}(x,\theta,s)| d\theta\right| +{K}\|\mathbb{E}{y}\|_{{H}^{2,1}(\R^2 \times(0,\infty))}.
		\end{aligned}
		\end{equation}
		Since $\mathbb{E}$ is a bounded operator from $L^{2}(\Omega)$ to $L^{2}(\mathbb{R}^{2})$ and from $H^{2}(\Omega)$ to $H^{2}(\mathbb{R}^{2}),$ there exists a constant $K>0$ such that
		\begin{equation}\label{pregronlt}
		\begin{array}{l}
		\displaystyle
		|X(x,t,s)-X_{0}(x,t,s)|\leqslant  \|\nabla v_{s}(.) \|_{L^{\infty}(\Omega)} \left|\int\limits_{s}^{t} |X(x,\theta,s)-X_{0}(x,\theta,s)| d\theta\right|+ K\|y\|_{V^{2,1}(Q_{\infty})}.
		\end{array}
		\end{equation}
		Now we can use Gr\"{o}nwall's inequality to obtain \eqref{4-9}. 
	\end{proof}
	Recall that the solution of \eqref{4-1} vanishes after some finite time $T_{A_{1}}=\frac{d}{A_{1}(1-A_{1})}.$ At the same time Lemma \ref{ld3} suggests that for any finite time $T>0,$ the flow $X_{0}(x,t,s)$ stays uniformly close to $X(x,t,s)$ in $\R^2\times(0,T)$ provided $\|y\|_{V^{2,1}(Q_{\infty})}$ is small enough. In view of these observations, in the following we design a Lyapunov functional corresponding to a localized energy, to prove that $\sigma$ vanishes after the time $T_1 > T_{A_1}$ when $\|y\|_{V^{2,1}(Q_{\infty})}$ is small enough, which will prove  Theorem \ref{t4-1}.\\ 
	Let $\varepsilon$ be a fixed positive constant in $(0,A_1)$ such that
	\begin{equation}\label{varepsilon}
	\begin{array}{l}
	\displaystyle
	T_{1}=\frac{d+\varepsilon}{(A_{1}-\varepsilon)(1-A_{1}+\varepsilon)}.
	\end{array}
	\end{equation}
	Our primary goal is to prove that, for a velocity field $y$ satisfying \eqref{cony.n} and such that $ \|y\|_{V^{2,1}(Q_{T_1})}$ is small enough and an initial condition ${\sigma}_{0}\in L^{\infty}(\Omega)$ satisfying \eqref{1-4}, the solution $\sigma$ of \eqref{4-3} satisfies 
	\begin{equation}
		\label{Sigma-t-1=0}
		\displaystyle \sigma(x,T_{1})=0\quad\mbox{for all}\quad x\in\Omega.
	\end{equation}
In fact, the condition \eqref{cony.n} does not play any role. We shall thus prove a slightly more general result: there exists $K_{3}>0,$ such that for any velocity field $y$ such that $ \|y\|_{V^{2,1}(Q_{T_1})}\leq K_3$ and any initial condition ${\sigma}_{0}\in L^{\infty}(\Omega)$ satisfying \eqref{1-4}, the solution $\sigma$ of 
	\begin{equation}\label{4-3-ter}
\left\{ \begin{array}{ll}
\displaystyle
\frac{\partial \sigma}{\partial t}+(({v}_s+e^{-\beta t}{y}) \cdot \nabla)\sigma-\beta\sigma=0\quad&\mbox{in}\quad Q_{\infty},
\vspace{1.mm}\\
\displaystyle
\sigma (x,t)=0 \quad &\mbox{on} \quad \Sigma_{in,y,\infty},
\vspace{1.mm}\\
\displaystyle
\sigma (x,0)=\sigma_0\quad&\mbox{in}\quad\Omega,
\end{array}\right.
\end{equation}
where 
$$
	\Sigma_{in,y,\infty} =  \{(x,t) \in  \Gamma  \times (0,T) \suchthat (v_s(x) + y(x,t) e^{-\beta t}) \cdot n(x) < 0 \},
$$	
	satisfies \eqref{Sigma-t-1=0}.
	
	We will achieve this goal using two steps. In the first one, we shall consider smooth ($\in V^{2,1} (Q_{T_1}) \cap L^2(0,T_1; W^{1,\infty}(\Omega))$) vector field $y$. In the second one, we will explain how the same result can be obtained for all vector fields $y \in V^{2,1}(Q_{T_1})$.
	\smallskip

	{\it Case $y \in V^{2,1} (Q_{T_1}) \cap L^2(0,T_1; W^{1,\infty}(\Omega))$.} Here we assume that 
\begin{equation}
	\label{AdditionalAssumption}
	y \in V^{2,1} (Q_{T_1}) \cap L^2(0,T_1; W^{1,\infty}(\Omega)).
\end{equation}
	With $\varepsilon >0$ given by \eqref{varepsilon}, we then define a function $\vartheta \in C^{\infty}(\R^2)$ and $\vartheta(x_{1},x_{2})\in [0,1]$ such that
	\begin{equation}\label{vartheta}
	\vartheta(x_{1},x_{2})=\left\{ \begin{array}{ll}
	0\quad&\mbox{if}\quad (x_{1},x_2) \in [0,d] \times [A_1,1-A_1],\\
	1\quad&\mbox{if}\quad (x_{1},x_2) \in \R^2 \setminus [-\frac{\varepsilon}{2}, d + \frac{\varepsilon}{2}] \times [A_1 -\frac{\varepsilon}{2} ,1-A_1 + \frac{\varepsilon}{2}].
	\end{array}\right.
	\end{equation}
	We consider the following auxiliary transport problem
\begin{equation}\label{auxtrans}
\left\{ \begin{array}{ll}
\displaystyle
\frac{\partial\Psi}{\partial t}+((v_{s}+e^{-\beta t}\mathbb{E}y)\cdot\nabla)\Psi=0\quad&\mbox{in}\quad \R^2\times (0,T_{1}),\\
\Psi(\cdot,0)=\vartheta\quad&\mbox{in}\quad \R^2.
\end{array}\right.
\end{equation}	
Since $v_{s}+e^{-\beta t}\mathbb{E}y$ belongs to $L^2(0,T_1; W^{1,\infty}(\R^2))$ the system \eqref{auxtrans} can be solved using the characteristics formula to obtain
\begin{equation}\label{reprP}
\begin{array}{l}
	\Psi(x,t)=\vartheta(X(x,0,t))\quad\mbox{for all}\quad (x,t)\in\R^2\times[0,T_{1}],
\end{array}
\end{equation}
where the flow $X(\cdot,\cdot,\cdot)$, defined by \eqref{4-5}, is globally Lipschitz in $\R^2\times[0,T_{1}]$. It follows that $\Psi$ is also globally Lipschitz in $\R^2\times[0,T_{1}]$. Besides, this formula immediately provides the non-negativity of $\Psi$ in $\R^2\times[0,T_{1}]$.

We now introduce the following quantity:
\begin{equation}\label{lyap}
\begin{array}{l}
\displaystyle
E_{loc}(t)=\frac{1}{2}\int_{\Omega} \Psi(x,t) |\sigma(x,t)|^{2} dx \quad\mbox{for all}\quad t\in [0,T_{1}].
\end{array}
\end{equation}	
The idea is that this quantity will measure the $L^2$ norm of $\sigma(\cdot,t)$ localized in the support of $\Psi(\cdot,t)$.
\\
In order to evaluate how the quantity $E_{loc}$ evolves, we use the renormalization property \eqref{Renormalized} with $\xi(s) = s^2$ and we compute the time derivative of $E_{loc}$ (in $\mathcal{D}'(0,T)$):
\begin{equation}\label{timeder}
\begin{array}{ll}
\displaystyle
\displaystyle \frac{d}{dt}E_{loc}(t)
& \displaystyle =\frac{1}{2}\int\limits_{\Omega} (\frac{\partial\Psi }{\partial t}+({v}_s+e^{-\beta t}\mathbb{E}{y})\cdot\nabla\Psi)|\sigma|^{2}dx+\beta\int\limits_{\Omega}\Psi|\sigma|^{2} dx
\\
& \displaystyle \quad  
-\frac{1}{2}\int\limits_{\Gamma}\Psi |\gamma_{\sigma}|^{2} (({v}_s+e^{-\beta t}\mathbb{E}{y})\cdot n)dm
\\
& \displaystyle\leqslant\beta\int\limits_{\Omega}\Psi |\sigma|^{2}dx = 2 \beta E_{loc}(t).
\end{array}
\end{equation}
In the above calculation we have used that $\Psi$ solves the equation \eqref{auxtrans}$_{1},$ $\gamma_{\sigma}$ (the trace of $\sigma,$ see Theorem \ref{trace}, item $(ii)$) vanishes on $\Sigma_{in,y,T_1}$, and that $\Psi$ stays non-negative in $(0,T_{1})\times \R^2$. Now using Gr\"{o}nwall's inequality in \eqref{timeder}, we get
	\begin{equation}\label{pregron}
	\begin{array}{l}
	\displaystyle \frac{1}{2}\int\limits_{\Omega}\Psi(x,T_1) |\sigma(x,T_{1})|^{2}dx=E_{loc}(T_{1})\leqslant e^{2\beta T_{1}}E_{loc}(0) = 0,
	\end{array}
	\end{equation}
	where the last identity comes from the fact that ${\sigma}_{0}\in L^{\infty}(\Omega)$ satisfies the condition \eqref{1-4} and the choice of $\Psi$ in \eqref{vartheta}, \eqref{auxtrans}.\\ 
	We now prove that 
	\begin{equation}
		\label{Psi-T1}
		\forall x \in \Omega, \quad \Psi(x, T_1) = 1.
	\end{equation}
	In order to prove \eqref{Psi-T1}, we will rely on the formula \eqref{reprP}, and Lemma \ref{ld3}.
	Indeed, for $x = (x_1,x_2) \in \Omega$, we have 
	$$
		X_0(x,0,T_1) = 
		\begin{pmatrix}
			x_{1}-T_{1}(x_{2}(1-x_{2}))\\
			x_{2}
		\end{pmatrix}.
	$$
	Therefore, if $x = (x_1,x_2) \in \Omega$ satisfies $x_2 \in (A_1 - \varepsilon, 1- A_1+\varepsilon)$, as one has $x_{2}(1-x_{2})>(A_{1}- \varepsilon)(1-A_{1} + \varepsilon)$, $(X_0(x, 0,T_1))_1 \leq d - T_1 (A_1-\varepsilon)(1-A_1+\varepsilon) \leq - \varepsilon$. Similarly, if $x_2 \in [0,1] \setminus (A_1-\varepsilon, 1-A_1+\varepsilon)$, $(X_0(x,0,T_1))_2 \in [0,1] \setminus (A_1-\varepsilon, 1-A_1+\varepsilon)$.
In particular, one obtains that for all $x=(x_{1},x_{2})\in \Omega$
\begin{equation}\label{X0es}
\begin{array}{l}
	X_{0}(x,0,T_{1})\in \R^2 \setminus (-\varepsilon, d+\varepsilon) \times (A_{1}-\varepsilon,1-A_{1} + \varepsilon).
\end{array}
\end{equation}
Now set $\displaystyle K_{3}=K_{3}(T_{1})=\frac{\varepsilon}{2K_{4}(T_{1})}>0,$ where $\displaystyle K_{4}(T_{1})$ is the constant appearing in Lemma \ref{ld3}, and  assume that
\begin{equation}\label{smally}
\begin{array}{l}
\displaystyle
\|y\|_{V^{2,1}(Q_{T_1})}<K_{3}.
\end{array}
\end{equation}
The inequality \eqref{4-9}, \eqref{X0es} and the assumption \eqref{smally} furnish that for all $x\in \Omega$,
\begin{equation}\label{charnear}
\begin{array}{l}
\displaystyle
	X(x,0,T_{1})\in \R^2 \setminus [-\frac{\varepsilon}{2},d+\frac{\varepsilon}{2}]\times[A_1 -\frac{\varepsilon}{2} ,1-A_1 + \frac{\varepsilon}{2}].
\end{array}
\end{equation}
Now using the representation \eqref{reprP} of $\Psi$, we immediately deduce \eqref{Psi-T1}. The estimate \eqref{pregron} then yields that $\sigma$ vanishes at time $T_1$ in the whole set $\Omega$, i.e. the identity \eqref{Sigma-t-1=0}.\\

{\it The general case $y \in V^{2,1}(Q_{T_1})$.} We now discuss the case in which $y$ does not satisfy the regularity \eqref{AdditionalAssumption} and $y$ only belongs to $V^{2,1}(Q_\infty)$ as stated in Theorem \ref{t4-1}. In order to deal with this case, we use the density of $V^{2,1}(Q_{T_1})\cap L^2(0,T_1; W^{1, \infty}(\Omega))$ in $V^{2,1}(Q_{T_1}) $. In particular, if $y$ belongs to $V^{2,1}(Q_\infty)$ and satisfies \eqref{smally}, we can find a sequence $y_n$ of functions of $V^{2,1}(Q_{T_1}) \cap L^2(0,T_1; W^{1, \infty}(\Omega))$ such that $y_n$ strongly converges to $y$ in $V^{2,1}(Q_T)$ and for all $n$, $\| y_n \|_{V^{2,1}(Q_{T_1})} < K_3$. Using then the previous arguments, we can show that for all $n$, $\sigma_n(x,T_1) = 0$ for all $x \in \Omega$, where $\sigma_n$ denotes the solution of \eqref{4-12} on the time interval $(0,T_1)$. The strong convergence of $(y_n)$ to $y$ in $V^{2,1}(Q_{T_1})$, hence of $y_n$ to $y$ in $L^1(Q_{T_1})$ and of $y_n\cdot n$ to $y\cdot n$ in $L^1(\Sigma_{T_1})$, and Lemma \ref{l4-4} then imply \eqref{Sigma-t-1=0}.\\

{\it End of the proof of Theorem \ref{t4-1}}. We shall then show that, when $y \in V^{2,1}(Q_\infty)$ satisfies the condition \eqref{smally}, the solution $\sigma$ of \eqref{4-3} stays zero for times larger than $T_1$. This is obvious, as one can replace \eqref{4-3}$_{3}$ by $\sigma(x,T_{1})=0$ on $\Omega$ and solve the Cauchy problem \eqref{4-3} in the time interval $[T_{1},\infty)$ to obtain that $\sigma$ is the trivial solution
 $$\sigma(x,s)=0\quad\mbox{for all}\quad (x,s)\in\Omega\times[T_{1},\infty).$$
This concludes the proof of Theorem \ref{t4-1}.
	\end{proof}
	
	\begin{remark}
		In the above proof, we have handled separately the case $y \in V^{2,1}(Q_{T_1}) \cap L^2(0,T_1; W^{1, \infty}(\Omega))$ from the case of a general vector field $y \in V^{2,1}(Q_{T_1})$, because the solution $\Psi$ of \eqref{auxtrans} for a vector field $y \in V^{2,1}(Q_{T_1})$ has \emph{a priori} only H\"older regularity (see in particular \cite[Theorem 3.7]{Bahouri-Chemin-Danchin}), and thus cannot be used directly as a test function in the weak formulation \eqref{Renormalized} to obtain \eqref{timeder}.
	\end{remark}
	
	\begin{remark}
	   In general to prove the stabilizability of a non-linear problem it is usual to first study the stabilizability of the corresponding linear problem and then consider the non-linear term as a source term to obtain analogous stabilizability result corresponding to the complete non-linear system. But the reader may notice that contrary to the usual method we did not consider the non-linear term $({y}\cdot\nabla)\sigma$ (nonlinear in $(\sigma,y)$ but linear in $\sigma$) as a source term while dealing with the system \eqref{4-3}. This is because the transport equation has no regularizing effect on its solution, hence it is not possible to consider the non-linear term in \eqref{4-3} as a source term and to recover the solution in $L^{\infty}(Q_{\infty}).$ 
	\end{remark}

	\section{Stabilization of the two dimensional Navier-Stokes equations.}\label{final}
	\begin{proof}[Proof of Theorem \ref{main}]
	We will prove Theorem \ref{main} using the Schauder fixed point theorem. We now discuss the strategy of the proof.\\[1.mm]
	(i) First we define an appropriate fixed point map. This will be done in Section \ref{dfp}.\\
	(ii) Then we fix a suitable ball which is stable by the map defined in step (i). This is done in the Section \ref{sbit}.\\
	(iii) In Section \ref{coco} we show that the ball defined in step (ii), is compact in some appropriate topology. We then prove that the fixed point map from step (i) in that topology is continuous.\\
	(iv) At the end we draw the final conclusion to prove Theorem \ref{main}.  
	\subsection{Definition of a fixed point map}\label{dfp}
	Let us recall the fully non linear system (including the boundary controls) under consideration:
	\begin{equation}\label{5-2*}
		\left\{\begin{array}{ll}
			\displaystyle
			\frac{\partial \sigma}{\partial t}+(({v}_s+e^{-\beta t}{y}) \cdot \nabla)\sigma-\beta\sigma=0\quad& \mbox{in}\quad Q_{\infty},
			\vspace{1.mm}\\
			\sigma(x,t)=0 \quad &\mbox{on} \quad \Gamma_{in} \times(0,\infty),
			\vspace{1.mm}\\
			\sigma (x,0)=\sigma_0\quad&\mbox{in}\quad\Omega,
			\vspace{2.mm}\\
			\displaystyle
			\frac{\partial {y}}{\partial t}-\beta {y}-\nu \Delta {y}+ ({v}_s \cdot \nabla){y}+({y} \cdot \nabla)v_s +\nabla  q=\mathcal{F}({y},\sigma)\quad& \mbox{in}\quad Q_{\infty},
			\vspace{1.mm}\\
			\mbox{div}({y})=0\quad& \mbox{in}\quad Q_{\infty},
			\vspace{1.mm}\\
			{y}=0\quad& \mbox {on} \quad (\Gamma_0\cup \Gamma_{out} )\times (0,\infty),
			\vspace{1.mm}\\
			{y}=\sum\limits_{j=1}^{N_{c}}{w}_{j}(t){{g}_{j}}(x) \quad &\mbox{on} \quad \Gamma_{in} \times (0,\infty),
			\vspace{1.mm}\\
			{y}(x,0)={y}_0\quad&\mbox{in}\quad\Omega,
			\vspace{1.mm}\\
			{w}_{c}^{'}+{\gamma }{w}_{c}-\mathcal{K}(Py,w_{c})=0\quad& \mbox{in}\quad (0,\infty),
			\vspace{1.mm}\\
			{w}_{c}(0)=0\quad&\mbox{in}\quad\Omega,
		\end{array}\right.
	\end{equation}
	where 
	$$\mathcal{F}({y},\sigma)=-e^{-\beta t}{\sigma}\frac{\partial {y}}{\partial t}-e^{-\beta t}({y}\cdot \nabla){y}-e^{-\beta t}{\sigma}(v_{s}\cdot \nabla){y} -e^{-\beta t}{\sigma}({y}\cdot \nabla)v_{s}-e^{-2\beta t}{\sigma}({y}\cdot \nabla){y}+\beta e^{-\beta t} \sigma {y},$$
	and $w_{c}=(w_{1},...,w_{N_{c}}).$ To prove the existence of a solution of the system \eqref{5-2*} we are going to define a suitable fixed point map.\\
	Now assume that $\sigma_{0}\in L^{\infty}(\Omega)$ and satisfies \eqref{1-4}. Recall the definition of ${g_{j}}$'s from \eqref{basisU0}. Let us suppose that $\widehat{y}\in {V}^{2,1}(Q_{\infty})$ satisfies \eqref{yK3} and on the boundary it is given in the following form
	 \begin{equation}\label{hatybd}
	 \widehat{y}\mid_{\Sigma_{\infty}}=\sum\limits_{j=1}^{N_{c}}\widehat{w}_{j}(t){g_{j}}(x),
	 \end{equation} 
	 where $\widehat{w}_{c}=(\widehat{w}_{1},...,\widehat{w}_{N_{c}})\in H^{1}(0,\infty;\mathbb{R}^{N_{c}}).$ In addition the coefficients $\widehat{w}_{c}$ are assumed to be such that $\widehat{y}$ satisfies the following boundary condition
	 \begin{equation}\label{concor**}
	 \|\widehat{y}\mid_{\Sigma_{\infty}}\|_{L^{\infty}(\Sigma_{\infty})}\leqslant \frac{L(1-L)}{2},
	 \end{equation}
	 where the constant $L$ was fixed in \eqref{dgc}. We further assume that $y_{0}\in V^{1}_{0}(\Omega).$\\  
	 	We consider the following set of equations
		\begin{equation}\label{5-2}
		\left\{\begin{array}{ll}
		\displaystyle
		\frac{\partial {\widehat{\sigma}}}{\partial t}+(({v}_s+e^{-\beta t}\widehat{y}) \cdot \nabla){\widehat{\sigma}}-\beta{\widehat{\sigma}}=0\quad &\mbox{in}\quad Q_{\infty},
		\vspace{1.mm}\\
		{\widehat{\sigma}} (x,t)=0 \quad &\mbox{on} \quad\Gamma_{in} \times(0,\infty),
		\vspace{1.mm}\\
		{\widehat{\sigma}} (x,0)=\sigma_0\quad&\mbox{in}\quad\Omega,
		\vspace{2.mm}\\
		\displaystyle
		\frac{\partial y}{\partial t}-\beta y-\nu \Delta y+ ({v}_s \cdot \nabla)y+(y \cdot \nabla)v_s +\nabla  q=\mathcal{F}(\widehat{y},{\widehat{\sigma}})\quad &\mbox{in}\quad Q_{\infty},
		\vspace{1.mm}\\
		\mbox{div}(y)=0\quad& \mbox{in}\quad Q_{\infty},
		\vspace{1.mm}\\
		y=0\quad &\mbox {on} \quad (\Gamma_0\cup\Gamma_{out}) \times (0,\infty),
		\vspace{1.mm}\\
		y=\sum\limits_{j=1}^{N_{c}}{w}_{j}(t){{g}_{j}}(x) \quad &\mbox{on} \quad \Gamma_{in} \times (0,\infty),
		\vspace{1.mm}\\
		y(x,0)={y}_0\quad&\mbox{in}\quad\Omega,
		\vspace{1.mm}\\
		{w}_{c}^{'}+{\gamma }{w}_{c}-\mathcal{K}(Py,w_{c})=0\quad& \mbox{in}\quad (0,\infty),
		\vspace{1.mm}\\
		{w}_{c}(0)=0\quad&\mbox{in}\quad\Omega,
		\end{array}\right.
		\end{equation}
		where 	$$\mathcal{F}(\widehat{y},{\widehat{\sigma}})=-e^{-\beta t}{\widehat{\sigma}}\frac{\partial \widehat{y}}{\partial t}-e^{-\beta t}(\widehat{y}\cdot \nabla)\widehat{y}-e^{-\beta t}{\widehat{\sigma}}(v_{s}\cdot \nabla)\widehat{y} -e^{-\beta t}{\widehat{\sigma}}(\widehat{y}\cdot \nabla)v_{s}-e^{-2\beta t}{\widehat{\sigma}}(\widehat{y}\cdot \nabla)\widehat{y}+\beta e^{-\beta t} \widehat{\sigma} \widehat{y}$$
		and $w_{c}=(w_{1},...,w_{N_{c}}).$ Since \eqref{hatybd} and \eqref{concor**} hold, one can verify that $\widehat{y}$ satisfies \eqref{cony.n}. Hence we can solve \eqref{5-2}$_{1}$-\eqref{5-2}$_{3}$ for $\widehat{\sigma}$ in $L^{\infty}(Q_{\infty})$ (see Section \ref{density}). Now using this $\widehat{\sigma}$ and $\widehat{y}$ one can solve \eqref{5-2}$_{4}$-\eqref{5-2}$_{10}$ (see Section \ref{velocity}) for $(y,w_{c})$ provided  $\mathcal{F}(\widehat{y},\widehat{\sigma})\in L^{2}(Q_{\infty}).$ This is indeed the case since we have $\widehat{y}\in V^{2,1}(Q_{\infty})$ and $\widehat{\sigma}\in L^{\infty}(Q_{\infty})$ and the detailed estimates are done in Lemma \ref{l5-2}.\\
    At this point we fix $T_{1}>T_{A_{1}}=\frac{d}{A_{1}(1-A_{1})}$ in Theorem \ref{t4-1}. We also fix the constant $K_{3}$ appearing in Theorem \ref{t4-1}. Let $0<\mu<K_{3}.$ We define a convex set $D_{\mu}$ as follows
   \begin{equation}\label{Dmu}
   D_{\mu}=\left\{ \begin{split}
   & \begin{pmatrix}
   \widehat{y}\\\widehat{w}_{c}
   \end{pmatrix}\in {V}^{2,1}(Q_{\infty})\times H^{1}(0,\infty;\mathbb{R}^{N_{c}})\suchthat\|(\widehat{y},\widehat{w}_{c})\|_{{V}^{2,1}(Q_{\infty})\times H^{1}(0,\infty;\mathbb{R}^{N_{c}})}\leqslant\mu\\
   &\,\,\,\,\mbox{and}\,
   \widehat{y}\mid_{\Sigma_{\infty}} \mbox{is of the form}\,\,\eqref{hatybd}\,\, \mbox{and satisfies the condition}\,\,\eqref{concor**}
   \end{split}\right\}.
   \end{equation} 
   Notice that $(0,0)$ belongs to $D_{\mu},$ hence $D_{\mu}$ is non-empty.\\
    Let $(\widehat{\sigma},y,w_{c})\in L^{\infty}(Q_{\infty})\times{V}^{2,1}(Q_{\infty})\times H^{1}(0,\infty;\mathbb{R}^{N_{c}})$ be the solution of system \eqref{5-2} corresponding to $(\widehat{y},\widehat{w}_{c})\in D_{\mu}.$ We consider the following map 
    \begin{equation}\label{chi}
    \begin{matrix}
    \displaystyle
	\chi: & D_{\mu} & \longrightarrow & {V}^{2,1}(Q_{\infty})\times H^{1}(0,\infty;\mathbb{R}^{N_{c}})\\
	\displaystyle &(\widehat{y},\widehat{w}_{c})
	&\mapsto & (
	\displaystyle {{y},w_{c}}
).
	\end{matrix}
	\end{equation}
	 In the sequel we will choose the constant $\mu\in(0,K_{3}),$ small enough such that $\chi$ maps $D_{\mu}$ into itself.\\
	 We will then look for a fixed point of the map $\chi.$ Indeed if $(
	 y_{f},w_{{f},c}
	 )$ is a fixed point of the map $\chi,$ by construction, there exists a function $\sigma_{f}$ such that the triplet $(\sigma_{{f}},{y}_{f},w_{{f},c})$ solves \eqref{5-2*}. Hence in order to prove Theorem \ref{main} it is enough to show that the map $\chi$ has a fixed point in $D_{\mu}$.
	\subsection{$\chi$ maps $D_{\mu}$ into itself}\label{sbit}
	In this section we will choose a suitable constant $\mu$ such that $\chi$ maps $D_{\mu}$ into itself, provided the initial data are small enough.\\ 
	 Now given $(
	 \widehat{y},\widehat{w}_{c}
	 )\in D_{\mu},$ we can use \eqref{fiessig} in order to show that $\widehat{\sigma},$ the solution of \eqref{5-2}$_{1}$-\eqref{5-2}$_{3}$ satisfies the following
	\begin{equation}\label{bdtran}
		\begin{array}{l}
			\|\widehat{\sigma}\|_{L^{\infty}(Q_{\infty})}\leqslant e^{\beta T_{1}}\|\sigma_{0}\|_{L^{\infty}(\Omega)}.
		\end{array}
	\end{equation}
	\begin{lem}\label{l5-2}
		 If $(
		\widehat{y},\widehat{w}_{c}
		)$ belongs to $D_{\mu}$ (defined in \eqref{Dmu}) and $\widehat{\sigma}$ is the solution of the problem \eqref{5-2}$_{1}$-\eqref{5-2}$_{3}$ then $\mathcal{F}(\widehat{y},\widehat{\sigma})\in L^{2}(Q_{\infty}).$ Besides there exist constants $K_{5}>0,$ $K_{6}>0$ such that for all $(\widehat{y},\widehat{w}_{c})\in D_{\mu}$ and for all $(\sigma_{0},y_{0})$ with $\sigma_{0}$ satisfying \eqref{1-4} and $e^{\beta T_{1}}\|\sigma_{0}\|_{L^{\infty}(\Omega)}<1,$ the following estimate is true:
		\begin{equation}\label{5-9}
			\begin{array}{l}
				\| \mathcal{F}(\widehat{y},\widehat{\sigma})\|_{L^{2}(Q_{\infty})}\leqslant K_{5}e^{\beta T_{1}}\|\sigma_{0}\|_{L^{\infty}(\Omega)}
				+K_{6}\|\widehat{y}\|_{{V}^{2,1}(Q_{\infty})}^{2}.
			\end{array}
		\end{equation}
	\end{lem}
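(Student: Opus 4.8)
The plan is to estimate the six terms composing $\mathcal{F}(\widehat{y},\widehat{\sigma})$ one at a time and to sort them according to whether they feed the $\|\sigma_0\|_{L^\infty(\Omega)}$ part or the $\|\widehat{y}\|^2_{V^{2,1}(Q_\infty)}$ part of \eqref{5-9}. Two ingredients are used throughout: (a) the $L^\infty$ bound $\|\widehat{\sigma}\|_{L^\infty(Q_\infty)}\leqslant e^{\beta T_{1}}\|\sigma_0\|_{L^\infty(\Omega)}$ recorded in \eqref{bdtran}, which rests on Theorem \ref{t4-1}; and (b) the continuous embedding $V^{2,1}(Q_\infty)\hookrightarrow C_b([0,\infty);V^1(\Omega))$ obtained by interpolation, using $[V^2(\Omega),V^0(\Omega)]_{1/2}=V^1(\Omega)$, which yields $\sup_{t\geqslant 0}\|\widehat{y}(t)\|_{V^1(\Omega)}\leqslant C\|\widehat{y}\|_{V^{2,1}(Q_\infty)}$, together with the two-dimensional Sobolev embedding $H^2(\Omega)\hookrightarrow L^\infty(\Omega)$.

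First I would treat the four terms that are linear in $\widehat{\sigma}$, namely $-e^{-\beta t}\widehat{\sigma}\,\partial_t\widehat{y}$, $-e^{-\beta t}\widehat{\sigma}(v_s\cdot\nabla)\widehat{y}$, $-e^{-\beta t}\widehat{\sigma}(\widehat{y}\cdot\nabla)v_s$ and $\beta e^{-\beta t}\widehat{\sigma}\,\widehat{y}$. For each, pulling out $\|\widehat{\sigma}\|_{L^\infty(Q_\infty)}$ and bounding $e^{-\beta t}\leqslant 1$ leaves an $L^2(Q_\infty)$ norm of a first-order expression in $\widehat{y}$ (one of $\partial_t\widehat{y}$, $\nabla\widehat{y}$, or $\widehat{y}$), all controlled by $\|\widehat{y}\|_{V^{2,1}(Q_\infty)}\leqslant\mu$ since $(\widehat{y},\widehat{w}_c)\in D_\mu$ and $v_s$ is smooth. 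Hence each of these four terms is bounded by $C\mu\, e^{\beta T_1}\|\sigma_0\|_{L^\infty(\Omega)}$ and is absorbed into the $K_5 e^{\beta T_1}\|\sigma_0\|_{L^\infty(\Omega)}$ summand.

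The term without $\widehat{\sigma}$, namely $-e^{-\beta t}(\widehat{y}\cdot\nabla)\widehat{y}$, is the genuinely quadratic one and is where the real work lies. After $e^{-\beta t}\leqslant 1$ and $\|(\widehat{y}\cdot\nabla)\widehat{y}\|_{L^2(\Omega)}\leqslant \|\widehat{y}\|_{L^\infty(\Omega)}\|\nabla\widehat{y}\|_{L^2(\Omega)}\leqslant C\|\widehat{y}\|_{H^2(\Omega)}\|\widehat{y}\|_{H^1(\Omega)}$, I would integrate in time and split the result as $\|\widehat{y}\|_{L^\infty(0,\infty;H^1)}\|\widehat{y}\|_{L^2(0,\infty;H^2)}$, so that embedding (b) gives $\|(\widehat{y}\cdot\nabla)\widehat{y}\|_{L^2(Q_\infty)}\leqslant C\|\widehat{y}\|^2_{V^{2,1}(Q_\infty)}$; this feeds the $K_6\|\widehat{y}\|^2_{V^{2,1}(Q_\infty)}$ summand. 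The remaining mixed term $-e^{-2\beta t}\widehat{\sigma}(\widehat{y}\cdot\nabla)\widehat{y}$ is handled by combining the previous quadratic estimate with $\|\widehat{\sigma}\|_{L^\infty(Q_\infty)}\leqslant e^{\beta T_1}\|\sigma_0\|_{L^\infty(\Omega)}<1$ (the standing hypothesis), which bounds it by $C\|\widehat{y}\|^2_{V^{2,1}(Q_\infty)}$ and again contributes to the $K_6$ summand. Collecting all six estimates yields \eqref{5-9}, and in particular $\mathcal{F}(\widehat{y},\widehat{\sigma})\in L^2(Q_\infty)$.

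I expect the only delicate point to be this quadratic convective estimate over the \emph{infinite} time horizon: one must ensure that the $C_b([0,\infty);V^1)$ bound holds uniformly on all of $[0,\infty)$, not merely on finite intervals, so that the product $L^\infty_t H^1_x \cdot L^2_t H^2_x$ is finite; this is exactly what the interpolation embedding (b) provides. It is worth noting that every $\widehat{\sigma}$-term is in fact supported in $t\in[0,T_1]$ by Theorem \ref{t4-1}, so finiteness in time is automatic for those terms; however the uniform bound \eqref{bdtran} already suffices and keeps the argument short.
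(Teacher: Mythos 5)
Your argument is correct and follows the same term-by-term strategy as the paper: the uniform bound $\|\widehat{\sigma}\|_{L^{\infty}(Q_{\infty})}\leqslant e^{\beta T_{1}}\|\sigma_{0}\|_{L^{\infty}(\Omega)}$ of \eqref{bdtran} absorbs every factor of $\widehat{\sigma}$, and the embedding $V^{2,1}(Q_{\infty})\hookrightarrow L^{\infty}(0,\infty;H^{1}(\Omega))$ controls the convective term. The one point where you genuinely diverge is the bookkeeping of the cross terms of size $e^{\beta T_{1}}\|\sigma_{0}\|_{L^{\infty}(\Omega)}\,\|\widehat{y}\|_{V^{2,1}(Q_{\infty})}$. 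The paper splits them with the elementary inequality $ab\leqslant\tfrac{1}{2}(a+b^{2})$, valid here because $a=e^{\beta T_{1}}\|\sigma_{0}\|_{L^{\infty}(\Omega)}<1$ (this is exactly \eqref{esmu0mu}), which yields constants $K_{5},K_{6}$ manifestly independent of $\mu$. You instead invoke $\|\widehat{y}\|_{V^{2,1}(Q_{\infty})}\leqslant\mu$ from membership in $D_{\mu}$, so your $K_{5}$ comes out as $C\mu$. This is salvageable but you must say why it does not create a circularity: the constants $K_{5},K_{6}$ feed into $K_{7},K_{8}$ of Lemma \ref{eohy}, which feed into the smallness condition \eqref{smu0}, which in turn determines $\mu$ through \eqref{setmu}; if $K_{5}$ depended on $\mu$ in an uncontrolled way this chain would be ill-founded. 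The fix is simply to note that $\mu<K_{3}$ with $K_{3}$ fixed once and for all in Theorem \ref{t4-1}, so one may take $K_{5}=CK_{3}$, uniform in $\mu$ --- but this remark is missing from your write-up and should be added. A second, harmless difference: for $(\widehat{y}\cdot\nabla)\widehat{y}$ you use $H^{2}(\Omega)\hookrightarrow L^{\infty}(\Omega)$ and pair $L^{\infty}(0,\infty;H^{1}(\Omega))$ with $L^{2}(0,\infty;H^{2}(\Omega))$, whereas the paper bounds $\|\widehat{y}\cdot\nabla\widehat{y}\|_{L^{2}(\Omega)}$ by $\|\widehat{y}\|_{H^{1}(\Omega)}\|\nabla\widehat{y}\|_{H^{1}(\Omega)}$ via the two-dimensional product estimate; both routes give the same $K\|\widehat{y}\|^{2}_{V^{2,1}(Q_{\infty})}$.
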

	\begin{proof}
		First use \eqref{bdtran} to show
		\begin{equation}\label{5-3}
		\begin{split}
		\|{\widehat{\sigma}}\frac{\partial\widehat{y}}{\partial t}\|_{L^{2}(Q_{\infty})} &\leqslant e^{\beta T_{1}}\|\sigma_{0}\|_{L^{\infty}(\Omega)}\|\widehat{y}\|_{{V}^{2,1}(Q_{\infty})}.  
		\end{split}
		\end{equation}
		Recall that $v_{s}\in C^{\infty}(\bar{\Omega}).$ Hence we again apply \eqref{bdtran} to get
		\begin{equation}\label{5-4}
		\|{\widehat{\sigma}}(v_{s}\cdot\nabla)\widehat{y}\|_{L^{2}(Q_{\infty})}\leqslant e^{\beta T_{1}}\|\sigma_{0}\|_{L^{\infty}(\Omega)}\|v_{s}\|_{W^{1,\infty}(\Omega)}\|\widehat{y}\|_{{V}^{2,1}(Q_{\infty})}.
		\end{equation}
		and
		\begin{equation}\label{5-5}
		\|{\widehat{\sigma}}(\widehat{y}\cdot\nabla)v_{s}\|_{L^{2}(Q_{\infty})}\leqslant e^{\beta T_{1}}\|\sigma_{0}\|_{L^{\infty}(\Omega)}\|v_{s}\|_{W^{1,\infty}(\Omega)}\|\widehat{y}\|_{{V}^{2,1}(Q_{\infty})}.
		\end{equation}
		Now we estimate $(\widehat{y}\cdot\nabla)\widehat{y}$ in $L^{2}(Q_{\infty}).$ We know that $V^{2,1}(Q_{\infty})$ is continuously embedded in the space $L^{\infty}(0,\infty;H^{1}(\Omega)).$ Hence $\widehat{y}\in L^{\infty}(0,\infty;H^{1}(\Omega)),$ $\nabla \widehat{y}\in L^{2}(0,\infty;H^{1}(\Omega))$ and the following holds 
		\begin{equation}\label{5-6}
		\begin{split}
		\|(\widehat{y}\cdot\nabla)\widehat{y}\|_{L^{2}(Q_{\infty})} & \leqslant K\|\widehat{y}\|_{L^{\infty}(0,\infty;H^{1}(\Omega))}\|\nabla\widehat{y}\|_{L^{2}(0,\infty;H^{1}(\Omega))}\leqslant K \|\widehat{y}\|^{2}_{{V}^{2,1}(Q_{\infty})}.
		\end{split}
		\end{equation}
		Similarly
		\begin{equation}\label{5-7}
		\begin{array}{l}
		\|{\widehat{\sigma}}(\widehat{y}\cdot\nabla)\widehat{y}\|_{L^{2}(Q_{\infty})}\leqslant Ke^{\beta T_{1}}\|\sigma_{0}\|_{L^{\infty}(\Omega)}\|\widehat{y}\|^{2}_{{V}^{2,1}(Q_{\infty})}
		\end{array}
		\end{equation}
		and
		\begin{equation}\label{5-8}
		\begin{array}{l}
		\|\beta{\widehat{\sigma}}\widehat{y}\|_{L^{2}(Q_{\infty})}\leqslant |\beta|e^{\beta T_{1}}\|\sigma_{0}\|_{L^{\infty}(\Omega)}\|\widehat{y}\|_{{V}^{2,1}(Q_{\infty})}.
		\end{array}
		\end{equation}
		Now observe that   
		\begin{equation}\label{esmu0mu}
		\begin{array}{l}
		e^{\beta T_{1}}\|\sigma_{0}\|_{L^{\infty}(\Omega)}\|\widehat{y}\|_{V^{2,1}(Q_{\infty})}
		\leqslant \frac{1}{2}(	e^{\beta T_{1}}\|\sigma_{0}\|_{L^{\infty}(\Omega)}+\|\widehat{y}\|_{V^{2,1}(Q_{\infty})}^{2}).
		\end{array}
		\end{equation}
		Hence we use estimates \eqref{5-4}-\eqref{5-8} and \eqref{esmu0mu} to prove Lemma \ref{l5-2} and the estimate \eqref{5-9}.
	\end{proof}
	\begin{lem}\label{eohy}
		There exist constants $K_{7}>\max\{1,K_{5},K_{6}\}>0,$ $K_{8}>\max\{K_{5},K_{6}\}>0$ such that for all $(\widehat{y},{\widehat{w}_{c}})\in D_{\mu}$ (defined in \eqref{Dmu}), for all $(\sigma_{0},y_{0})$ with $\sigma_{0}$ satisfying \eqref{1-4}, $e^{\beta T_{1}}\|\sigma_{0}\|_{L^{\infty}(\Omega)}<1,$ and for $\widehat{\sigma}$ uniquely solving \eqref{5-2}$_{1}$-\eqref{5-2}$_{3},$  
		$({y},w_{c}
		)=\chi(\widehat{y},\widehat{w}_{c}),$ solving \eqref{5-2}$_{4}$-\eqref{5-2}$_{10}$ is well defined and satisfies the following inequality 
		\begin{equation}\label{5-15}
		\begin{array}{l}
		\|({y},w_{c})\|_{{V}^{2,1}(Q_{\infty})\times H^{1}(0,\infty;\mathbb{R}^{N_{c}})}\leqslant
		K_{7}{\max}\,\{e^{\beta T_{1}}\|\sigma_{0}\|_{L^{\infty}(\Omega)},\|{y}_{0}\|_{{V}^{1}_{0}(\Omega)}\}+K_{8}\|\widehat{y}\|_{{V}^{2,1}(Q_{\infty})}^{2}.
		\end{array}
		\end{equation}	
		\end{lem}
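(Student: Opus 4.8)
The plan is to recognize that, for a fixed pair $(\widehat{y},\widehat{w}_{c})\in D_{\mu}$, the system \eqref{5-2}$_{4}$--\eqref{5-2}$_{10}$ governing $(y,w_{c})$ is exactly the closed-loop extended Oseen system \eqref{114r} treated in Corollary \ref{c3-8}, with the nonhomogeneous source term $f=\mathcal{F}(\widehat{y},\widehat{\sigma})$. Thus the estimate \eqref{5-15} should follow by simply feeding the bound on $\|\mathcal{F}(\widehat{y},\widehat{\sigma})\|_{L^{2}(Q_{\infty})}$ from Lemma \ref{l5-2} into the linear estimate \eqref{esty} from Corollary \ref{c3-8}, and then rearranging constants into the required form. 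The only genuine verifications are that the solution map is well defined and that the hypotheses of Corollary \ref{c3-8} are actually met.

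First I would check that $\widehat{\sigma}$ is well defined. Since $(\widehat{y},\widehat{w}_{c})\in D_{\mu}$ with $\mu<K_{3}$, we have $\|\widehat{y}\|_{V^{2,1}(Q_{\infty})}\leqslant\mu<K_{3}$, so \eqref{yK3} holds; moreover the boundary form \eqref{hatybd} together with \eqref{concor**} guarantees, as noted before \eqref{Dmu}, that $\widehat{y}$ satisfies \eqref{cony.n}. Hence Theorem \ref{t4-1} (and the well-posedness results of Section \ref{density}) provide a unique $\widehat{\sigma}\in L^{\infty}(Q_{\infty})$ solving \eqref{5-2}$_{1}$--\eqref{5-2}$_{3}$ and obeying \eqref{bdtran}. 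Next, Lemma \ref{l5-2} yields $\mathcal{F}(\widehat{y},\widehat{\sigma})\in L^{2}(Q_{\infty})$ with the bound \eqref{5-9}. Since in addition $y_{0}\in V^{1}_{0}(\Omega)$, the hypotheses \eqref{cofy00} of Corollary \ref{c3-8} hold with $f=\mathcal{F}(\widehat{y},\widehat{\sigma})$, so the system \eqref{5-2}$_{4}$--\eqref{5-2}$_{10}$ admits a unique solution $(y,w_{c})\in V^{2,1}(Q_{\infty})\times H^{1}(0,\infty;\mathbb{R}^{N_{c}})$; in particular $\chi(\widehat{y},\widehat{w}_{c})$ is well defined, and the estimate \eqref{esty} gives $\|(y,w_{c})\|_{V^{2,1}(Q_{\infty})\times H^{1}(0,\infty;\mathbb{R}^{N_{c}})}\leqslant K_{1}\bigl(\|y_{0}\|_{V^{1}_{0}(\Omega)}+\|\mathcal{F}(\widehat{y},\widehat{\sigma})\|_{L^{2}(Q_{\infty})}\bigr)$.

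Finally I would combine the two inequalities. Inserting \eqref{5-9} into \eqref{esty} produces a bound of the shape $K_{1}\|y_{0}\|_{V^{1}_{0}(\Omega)}+K_{1}K_{5}\,e^{\beta T_{1}}\|\sigma_{0}\|_{L^{\infty}(\Omega)}+K_{1}K_{6}\|\widehat{y}\|_{V^{2,1}(Q_{\infty})}^{2}$. Bounding the first two terms by $K_{1}(1+K_{5})\max\{e^{\beta T_{1}}\|\sigma_{0}\|_{L^{\infty}(\Omega)},\|y_{0}\|_{V^{1}_{0}(\Omega)}\}$ then gives \eqref{5-15}, and it suffices to set $K_{7}$ and $K_{8}$ equal to the maximum of the resulting coefficients and of the quantities $\max\{1,K_{5},K_{6}\}$, respectively $\max\{K_{5},K_{6}\}$, so that the prescribed strict inequalities on $K_{7},K_{8}$ are also met. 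There is no deep obstacle here: the substantive analytic work has already been carried out in Corollary \ref{c3-8} (the $V^{2,1}$ regularity and stabilization of the linear system) and in Lemma \ref{l5-2} (the $L^{2}$ control of the nonlinearity). The only point requiring slight care is the bookkeeping that turns the additive right-hand side into the $\max$-form with the stated constant constraints, and making sure that the smallness assumption $e^{\beta T_{1}}\|\sigma_{0}\|_{L^{\infty}(\Omega)}<1$ is used exactly where Lemma \ref{l5-2} requires it.
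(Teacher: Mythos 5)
Your proposal is correct and follows essentially the same route as the paper: the paper's proof also consists of applying Corollary \ref{c3-8} with $f=\mathcal{F}(\widehat{y},\widehat{\sigma})$ to obtain the linear estimate \eqref{esty} and then inserting the bound \eqref{5-9} from Lemma \ref{l5-2}. Your additional verifications of well-posedness and the bookkeeping of constants are consistent with what the paper leaves implicit.
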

	\begin{proof}	
	Corollary \ref{c3-8} shows that $(
		y,w_{c}
		)$ satisfy the following estimate
		\begin{equation}\label{3-24re}
		\begin{array}{l}
		\|( y,w_{c} )\|_{V^{2,1}(Q_{\infty})\times H^{1}(0,\infty;\mathbb{R}^{N_{c}})}\leqslant K_{1}(\| {y}_{0} \|_{{V}^{1}_{0}(\Omega)} +\| \mathcal{F}(\widehat{y},{\widehat{\sigma}})\|_{L^{2}(Q_{\infty})}).
		\end{array}
		\end{equation}
		Now using \eqref{5-9} in \eqref{3-24re}, we get the desired result.
	\end{proof}	
	From now on we will consider the initial data $\sigma_{0}\in L^{\infty}(\Omega)$ and $y_{0}\in V^{1}_{0}(\Omega)$ such that they satisfy
	\begin{equation}\label{smu0}
	\left\{ \begin{array}{l}
	\displaystyle
	\sigma_{0}\,\,\mbox{satisfies}\,\,\eqref{1-4},\\
	\displaystyle
	\mbox{max}\,\{e^{\beta T_{1}}\|\sigma_{0}\|_{L^{\infty}(\Omega)},\|{y}_{0}\|_{{V}^{1}_{0}(\Omega)}\}<\min\,\left\{\frac{L(1-L)}{8K_{2}K_{7}},\frac{K_{3}}{2K_{7}},\frac{1}{4K_{7}K_{8}},1\right\},
	\end{array}\right.
	\end{equation}
	where $K_{2},$ $K_{7}$ and $K_{8}$ are the constants appearing respectively in \eqref{dofcon} and \eqref{5-15}.
	\begin{lem}\label{l5-3}
	For all $(\sigma_{0},y_{0})$ satisfying \eqref{smu0}, setting
	\begin{equation}\label{setmu}
		\begin{split}
		\displaystyle
		\mu={2K_{7}\mbox{max}\,\{e^{\beta T_{1}}\|\sigma_{0}\|_{L^{\infty}(\Omega)},\|{y}_{0}\|_{{V}^{1}_{0}(\Omega)}\}},
		\end{split}
	\end{equation}
 	where $K_7$ is the constant in \eqref{5-15}, the map $\chi$ (defined in \eqref{chi}) maps $D_{\mu}$ (defined in \eqref{Dmu}) into itself.
	\end{lem}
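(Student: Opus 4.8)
The plan is to verify, for $(y,w_c)=\chi(\hat y,\hat w_c)$, the three conditions defining membership in $D_\mu$. First I would abbreviate $M=\max\{e^{\beta T_1}\|\sigma_0\|_{L^\infty(\Omega)},\|y_0\|_{V^1_0(\Omega)}\}$, so that \eqref{setmu} reads $\mu=2K_7M$, and record that the choice \eqref{smu0} forces $\mu<K_3$ through the bound $M<K_3/(2K_7)$. This is what makes $D_\mu$ admissible ($0<\mu<K_3$) and, crucially, guarantees $\|\hat y\|_{V^{2,1}(Q_\infty)}\le\mu<K_3$ for every $(\hat y,\hat w_c)\in D_\mu$, so that Theorem \ref{t4-1} applies to yield $\hat\sigma$ with the bound \eqref{bdtran}; together with Lemma \ref{l5-2} this ensures $\mathcal F(\hat y,\hat\sigma)\in L^2(Q_\infty)$ and hence that $\chi(\hat y,\hat w_c)$ is well defined and obeys \eqref{5-15}. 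The structural requirement is then immediate: by the boundary condition \eqref{5-2}$_{7}$ the trace $y|_{\Sigma_\infty}$ equals $\sum_{j}w_j(t)g_j(x)$, so it is automatically of the form \eqref{hatybd}.

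It remains to check the two size conditions. For the norm bound I would insert $\|\hat y\|_{V^{2,1}(Q_\infty)}\le\mu$ into \eqref{5-15} to get $\|(y,w_c)\|\le K_7M+K_8\mu^2$, and then absorb the quadratic term: since $\mu=2K_7M$ and $M<1/(4K_7K_8)$, we have $K_8\mu^2=(4K_7K_8M)(K_7M)<K_7M$, whence $\|(y,w_c)\|<2K_7M=\mu$. For the boundary condition \eqref{concor**} I would not use \eqref{5-15} but instead the $L^\infty$ control \eqref{dofcon} of Corollary \ref{c3-8} with source $f=\mathcal F(\hat y,\hat\sigma)$, combined with \eqref{5-9}, giving $\|y|_{\Sigma_\infty}\|_{L^\infty(\Sigma_\infty)}\le K_2(\|y_0\|_{V^1_0}+K_5M+K_6\mu^2)$. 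Using $K_7>\max\{1,K_5,K_6\}$ and $K_8>K_6$, each of the three summands is bounded by $K_7M$ (the quadratic one again via $K_6\mu^2<K_7M$), so $\|y|_{\Sigma_\infty}\|_{L^\infty}<3K_2K_7M$; the remaining constraint $M<L(1-L)/(8K_2K_7)$ then yields $3K_2K_7M<\tfrac38 L(1-L)<\tfrac12 L(1-L)$, which is exactly \eqref{concor**}.

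The argument is in essence a smallness/absorption bookkeeping, and the point worth stressing is that the three constraints on $M$ in \eqref{smu0} serve three distinct purposes: $M<K_3/(2K_7)$ for admissibility and applicability of the transport result, $M<1/(4K_7K_8)$ to dominate the quadratic nonlinearity by the linear data term after the self-consistent choice $\mu=2K_7M$, and $M<L(1-L)/(8K_2K_7)$ to keep the boundary trace inside the window \eqref{concor**} needed for \eqref{cony.n}. The only genuine (if mild) obstacle is the self-consistency of the radius: one must verify that $K_8\mu^2$ is truly dominated by $K_7M$ once $\mu$ is tied to $M$ by \eqref{setmu}, which is precisely the role of the condition $M<1/(4K_7K_8)$.
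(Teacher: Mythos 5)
Your proof is correct and follows essentially the same route as the paper: insert $\|\widehat{y}\|_{V^{2,1}}\leqslant\mu$ into \eqref{5-15}, absorb the quadratic term $K_{8}\mu^{2}$ into $K_{7}M$ via the constraint $M<1/(4K_{7}K_{8})$ (the paper phrases this as $\mu<1/(2K_{8})$ in \eqref{mu0}, which is the same bound), read off the boundary form from \eqref{5-2}$_{7}$, and verify \eqref{concor**} by bounding $\|y_{0}\|_{V^{1}_{0}(\Omega)}+\|\mathcal{F}(\widehat{y},\widehat{\sigma})\|_{L^{2}(Q_{\infty})}$ by $\tfrac{3}{8}L(1-L)/K_{2}$. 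The only cosmetic difference is that you invoke \eqref{dofcon} directly where the paper routes the last step through Corollary \ref{p3.0.2}, which is itself just a restatement of \eqref{dofcon}.
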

	\begin{proof}
		In view of \eqref{smu0}$_{2}$ and \eqref{setmu}, one observes in particular that
		\begin{equation}\label{mu0}
		\begin{array}{l}
		\displaystyle
		0<\mu<\min\,\left\{\frac{L(1-L)}{4K_{2}},{K_{3}},\frac{1}{2K_{8}}\right\}.
		\end{array}
		\end{equation}
		Now we will verify that with the choice \eqref{setmu} of $\mu,$ the map $\chi$ maps $D_{\mu}$ into itself. Let $(\widehat{y},{\widehat{w}_{c}})\in D_{\mu},$ for all $(\sigma_{0},y_{0})$ obeying \eqref{smu0} and for $\widehat{\sigma}$ uniquely solving \eqref{5-2}$_{1}$-\eqref{5-2}$_{3},$  
		$({y},w_{c}
		)=\chi(\widehat{y},\widehat{w}_{c}),$ solves \eqref{5-2}$_{4}$-\eqref{5-2}$_{10}.$ We claim that $(y,w_{c})\in D_{\mu}.$\\
		 First of all in view of \eqref{5-15}, \eqref{smu0}$_{2},$ \eqref{setmu} and \eqref{mu0} we observe that 
		 \begin{equation}\nonumber
		 \begin{split}
		 \displaystyle
			\|({y},w_{c})\|_{{V}^{2,1}(Q_{\infty})\times H^{1}(0,\infty;\mathbb{R}^{N_{c}})}\leqslant K_{7}\mbox{max}\,\{e^{\beta T_{1}}\|\sigma_{0}\|_{L^{\infty}(\Omega)},\|{y}_{0}\|_{{V}^{1}_{0}(\Omega)}\}+K_{8}\mu^{2}\leqslant \mu.
		 \end{split}
		 \end{equation}
	Since $({y},w_{c}
	)=\chi(\widehat{y},\widehat{w}_{c})$ solves \eqref{5-2}$_{4}$-\eqref{5-2}$_{10},$ the function $y$ on the boundary is given by 	$\sum\limits_{j=1}^{N_{c}}{w}_{j}(t){{g}_{j}}(x)$. This verifies \eqref{hatybd}.\\	
	Finally 
	\begin{equation}\label{L1L}
	\begin{split}
	\|{y}_{0} \|_{{V}^{1}_{0}(\Omega)}+\| \mathcal{F}(\widehat{y},\widehat{\sigma})\|_{L^{2}(Q_{\infty})}&\leqslant (1+K_{5})\mbox{max}\,\{e^{\beta T_{1}}\|\sigma_{0}\|_{L^{\infty}(\Omega)},\|{y}_{0}\|_{{V}^{1}_{0}(\Omega)}\}
	+K_{6}\mu^{2}\\
	& \leqslant\frac{3}{8}\frac{L(1-L)}{K_{2}}\leqslant\frac{L(1-L)}{2K_{2}},
	\end{split}
	\end{equation}	
	where in \eqref{L1L}$_{1}$ we have used \eqref{smu0}$_{2},$ \eqref{5-9} and in \eqref{L1L}$_{2}$ we have used \eqref{smu0}$_{2}$, \eqref{mu0} and the fact that $K_{7}>\max\{1,K_{5},K_{6}\}>0,$ $K_{8}>\max\{K_{5},K_{6}\}>0$ (which follows from the statement of Lemma \ref{eohy}). Now using Corollary \ref{p3.0.2} one verifies \eqref{concor**} for $y\mid_{\Sigma_{\infty}}.$\\ 
	Hence we have verified that $(y,w_{c})\in D_{\mu}$ and the proof of Lemma \ref{l5-3} is finished.
	\end{proof}	
	At this point we fix $\mu$ as in Lemma \ref{l5-3}. 
	\subsection{Compactness and continuity}\label{coco}	
	To start with, let us define the weighted space
	\begin{equation}\nonumber
	\begin{array}{l}
	\displaystyle
	L^{2}(0,\infty,(1+t)^{-1}dt;{ L^{2}}(\Omega)\times\mathbb{R}^{N_{c}})\\
	\displaystyle=\left\{\overline{z}=\begin{pmatrix}
	z(x,t)\\w_{c}(t)
	\end{pmatrix}\in L^{2}(Q_{\infty})\times L^{2}((0,\infty);\mathbb{R}^{N_{c}})\suchthat \int\limits_{0}^{\infty}{(1+t)^{-2}}{\|\overline{z}\|_{{L}^{2}(\Omega)\times\mathbb{R}^{N_{c}}}^{2}}dt<\infty\right\}.
	\end{array}
	\end{equation}
	We endow the set $D_{\mu},$ defined in \eqref{Dmu}, with the norm induced from $L^{2}(0,\infty,(1+t)^{-1}{dt};{L^{2}}(\Omega)\times\mathbb{R}^{N_{c}}).$ 
	\begin{lem}\label{l5-4}
		The set $D_{\mu}$ is compact in $L^{2}(0,\infty,{(1+t)^{-1}}dt;{L}^{2}(\Omega)\times\mathbb{R}^{N_{c}}).$
	\end{lem}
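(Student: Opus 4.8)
The plan is to establish sequential compactness directly: starting from an arbitrary sequence $(\widehat{y}_{n},\widehat{w}_{c,n})$ in $D_{\mu}$, I extract a subsequence that converges strongly in $L^{2}(0,\infty,(1+t)^{-1}dt;L^{2}(\Omega)\times\mathbb{R}^{N_{c}})$, with limit again in $D_{\mu}$. Since $D_{\mu}$ is a bounded, closed and convex subset of the Hilbert space $V^{2,1}(Q_{\infty})\times H^{1}(0,\infty;\mathbb{R}^{N_{c}})$, it is weakly compact; after passing to a subsequence I may assume $(\widehat{y}_{n},\widehat{w}_{c,n})\rightharpoonup(\widehat{y},\widehat{w}_{c})$ weakly in $V^{2,1}(Q_{\infty})\times H^{1}(0,\infty;\mathbb{R}^{N_{c}})$ with $(\widehat{y},\widehat{w}_{c})\in D_{\mu}$. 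Indeed the linear trace constraint \eqref{hatybd} defines a closed subspace and is preserved under weak limits (via continuity of the trace map on $V^{2,1}$), while the closed convex constraint \eqref{concor**} passes to the limit by weak-$*$ lower semicontinuity of the $L^{\infty}(\Sigma_{\infty})$ norm. It then remains to upgrade this weak convergence to strong convergence in the weighted space.

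The argument rests on two ingredients: a local-in-time compactness and a uniform smallness of the tails. For the local part I fix any finite $T$ and invoke the Aubin--Lions--Simon compactness lemma. Since $V^{2}(\Omega)$ is compactly embedded in $V^{0}(\Omega)$ and the time derivatives are bounded in $L^{2}(0,T;V^{0}(\Omega))$, the embedding $V^{2,1}(Q_{T})\hookrightarrow L^{2}(0,T;L^{2}(\Omega))$ is compact, and likewise $H^{1}(0,T;\mathbb{R}^{N_{c}})\hookrightarrow L^{2}(0,T;\mathbb{R}^{N_{c}})$ is compact. Taking an increasing sequence $T_{k}\to\infty$ and performing a diagonal extraction, I obtain a further subsequence (not relabelled) such that $(\widehat{y}_{n},\widehat{w}_{c,n})\to(\widehat{y},\widehat{w}_{c})$ strongly in $L^{2}(0,T;L^{2}(\Omega)\times\mathbb{R}^{N_{c}})$ for every finite $T$. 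On $(0,T)$ the weight $(1+t)^{-2}$ is bounded above by $1$, so this is a fortiori strong convergence for the weighted norm restricted to $(0,T)$.

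For the tails I use that $V^{2,1}(Q_{\infty})$ embeds continuously into $L^{\infty}(0,\infty;H^{1}(\Omega))$ (the same embedding already exploited in \eqref{5-6}) and that $H^{1}(0,\infty;\mathbb{R}^{N_{c}})\hookrightarrow L^{\infty}(0,\infty;\mathbb{R}^{N_{c}})$. Hence there is a constant $C>0$ with $\|\widehat{y}_{n}(\cdot,t)\|_{L^{2}(\Omega)}+|\widehat{w}_{c,n}(t)|\leqslant C\mu$ for all $t$ and all $n$, and the same bound for the limit. Consequently
\begin{equation}\nonumber
\begin{array}{l}
\displaystyle \int\limits_{T}^{\infty}(1+t)^{-2}\left\|\begin{pmatrix}\widehat{y}_{n}\\ \widehat{w}_{c,n}\end{pmatrix}\right\|^{2}_{L^{2}(\Omega)\times\mathbb{R}^{N_{c}}}dt\leqslant C\mu^{2}\int\limits_{T}^{\infty}(1+t)^{-2}dt=\frac{C\mu^{2}}{1+T},
\end{array}
\end{equation}
which tends to $0$ as $T\to\infty$ uniformly in $n$, and the identical bound holds for the limit $(\widehat{y},\widehat{w}_{c})$.

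Combining the two ingredients concludes the proof: given $\varepsilon>0$, I first choose $T$ so large that the tail beyond $T$ of each $(\widehat{y}_{n},\widehat{w}_{c,n})$ and of the limit is below $\varepsilon$; then the local strong convergence on $(0,T)$ makes the head contribution below $\varepsilon$ for $n$ large. This yields strong convergence of $(\widehat{y}_{n},\widehat{w}_{c,n})$ to $(\widehat{y},\widehat{w}_{c})\in D_{\mu}$ in the weighted space, so $D_{\mu}$ is compact. The only genuinely delicate point is the passage from finite-horizon Aubin--Lions compactness to the whole half-line; this is precisely what the temporal weight is designed to handle, since it decouples the control of oscillations on bounded intervals (compactness) from the control of the behaviour at infinity (uniform tail decay furnished by the $L^{\infty}$-in-time bounds).
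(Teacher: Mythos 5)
Your proposal is correct and follows essentially the same route as the paper: closedness of $D_{\mu}$ under weak limits (lower semicontinuity of the norms, preservation of the trace constraint \eqref{hatybd} and of the bound \eqref{concor**}), Aubin--Lions compactness on each finite interval, a uniform tail estimate from the weight, and a diagonal extraction. The only cosmetic difference is that the paper bounds the tail $\int_{T}^{\infty}(1+t)^{-2}\|\overline{z}_{n}\|^{2}dt\leqslant(1+T)^{-2}$ directly from the $L^{2}$-in-time bound, whereas you route it through the $L^{\infty}$-in-time embeddings; both work.
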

		\begin{proof}
			We divide the proof in two steps.\\	
			Step 1.\,\,We claim that $D_{\mu}$ is closed in the space $L^{2}(0,\infty,{(1+t)^{-1}}dt;{L}^{2}(\Omega)\times\mathbb{R}^{N_{c}}).$ Consider a sequence $\{\overline{y}_{n}\}_{n}$ $\left(\mbox{where}\,\,\overline{y}_{n}=(
			y_{n},{w_{n,c}}
			) \right)$ in $D_{\mu}$ such that $\{\overline{y}_{n}\}_{n}$ converges to some $\overline{y}$ $\left(\mbox{where}\,\,\overline{y}=(
			y,{w_{c}}
			) \right)$ in the space
			$L^{2}(0,\infty,{(1+t)^{-1}}dt;{L}^{2}(\Omega)\times\mathbb{R}^{N_{c}}).$ We will check that $\overline{y}\in D_{\mu}.$ Since for all $n,$ $\overline{y}_{n}\in D_{\mu},$ the definition of $D_{\mu}$ (see \eqref{Dmu}) yields
			\begin{equation}\label{bndyn}
			\begin{array}{l}
			\|\overline{y}_{n}\|_{V^{2,1}(Q_{\infty})\times H^{1}(0,\infty;\mathbb{R}^{N_{c}})}\leqslant \mu. 
			\end{array}
			\end{equation}
			Using the lower semi-continuity of the norms one obtains
			\begin{equation}\label{barymu}
			\begin{array}{l}
			\|\overline{y}\|_{V^{2,1}(Q_{\infty})\times H^{1}(0,\infty;\mathbb{R}^{N_{c}})}\leqslant\mu.
			\end{array}
			\end{equation}
			Now we will verify that 
			\begin{equation}\label{tracey}
			\begin{array}{l}
			y\mid_{\Sigma_{\infty}}=\sum\limits_{j=1}^{N_{c}}w_{j}(t){{g}_{j}}(x)\quad\mbox{for all}\quad (x,t)\in\Sigma_{\infty},
			\end{array}
			\end{equation}
			where ${w_{c}}=(w_{1},...,w_{N_{c}}).$
			From \eqref{bndyn} one has the following weak convergence
			$$y_{n}\rightharpoonup y\quad\mbox{in}\quad L^{2}(0,\infty;H^{2}(\Omega)),\quad\mbox{and}\quad w_{n,c}\rightharpoonup w_{c}\quad\mbox{in}\quad H^{1}(0,\infty;\mathbb{R}^{N_{c}}).$$
			 As the trace operator is linear and bounded from $H^{2}(\Omega)$ onto $H^{3/2}(\Gamma),$ $y_{n}\mid_{\Sigma_{\infty}}$ converges weakly to $y\mid_{\Sigma_{\infty}}$ in $L^{2}(0,\infty;H^{3/2}(\Gamma)).$ 
			On the other hand as $\overline{y}_{n}\in D_{\mu},$ for each $n$ $${y}_{n}\mid_{\Sigma_{\infty}}=\sum\limits_{j=1}^{N_{c}}w_{n,j}(t){{g}_{j}}(x),$$
			where $w_{n,c}=(w_{n,1},...,w_{n,N_{c}}).$ 
			Now since ${w_{n,c}}$ converges weakly to ${w_{c}}$ in $H^{1}(0,\infty;\mathbb{R}^{N_{c}})$ we have the following convergence in the sense of distribution 
			$${y}_{n}\mid_{\Sigma_{\infty}} \xrightarrow[n\rightarrow \infty]{} \sum\limits_{j=1}^{N_{c}}w_{j}(t){{g}_{j}}(x)\quad\mbox{in}\quad \mathscr{D}'(\Sigma_{\infty}).$$ 
			Since the distributional limit and the weak limit (in the space $L^{2}(0,\infty;H^{3/2}(\Gamma))$) of $y_{n}\mid_{\Sigma_{\infty}}$ coincides, one at once obtains the expression \eqref{tracey} of $y\mid_{\Sigma_{\infty}}.$  Also using the continuous embedding $H^{1}(0,\infty)\hookrightarrow L^{\infty}(0,\infty)$ one observes that 
			$$y_{n}\mid_{\Sigma_{\infty}}\stackrel{\ast}{\rightharpoonup}y\mid_{\Sigma_{\infty}}\quad\mbox{in}\quad L^{\infty}(\Sigma_{\infty}).$$
			Hence one has the following by lower semi continuity of norm with respect to the above weak type convergence
			$$\|y\mid_{\Sigma_{\infty}}\|_{L^{\infty}(\Sigma_{\infty})}\leqslant  \frac{L(1-L)}{2}. $$
			Hence $y\mid_{\Sigma_{\infty}}$ satisfies \eqref{concor**}. This finishes the proof of $\bar{y}\in D_{\mu}.$\\[2.mm]
			Step 2.\,\,Now to prove Lemma \ref{l5-4}, it is enough to show that ${V}^{2,1}(Q_{\infty})\times H^{1}(0,\infty;\mathbb{R}^{N_{c}})$ is compactly embedded in $L^{2}(0,\infty,(1+t)^{-1}{dt},{ L^{2}}(\Omega)\times\mathbb{R}^{N_{c}}).$ 
			Let $\{\overline{z}_{n}\}_{n}$ be a sequence in ${V}^{2,1}(Q_{\infty})\times H^{1}(0,\infty;\mathbb{R}^{N_{c}})$ such that
			$$\| \overline{z}_{n}\|_{{V}^{2,1}(Q_{\infty})\times H^{1}((0,\infty);\mathbb{R}^{N_{c}})}\leqslant 1.$$
			This implies that for any $T>0$
			\begin{equation}\label{5-16}
			\begin{array}{l}
			\displaystyle
			\int\limits_{T}^{\infty}{(1+t)^{-2}}{\|\overline{z}_{n}\|_{{L}^{2}(\Omega)\times \mathbb{R}^{N_{c}}}^{2} }dt\leqslant \frac{1}{(1+T)^{2}},
			\end{array}
			\end{equation}
			for all $n\in\mathbb{N}.$
			Let $\epsilon>0.$ Choose $T_{\epsilon}>0$ such that
			$$\frac{1}{(1+T_{\epsilon})^{2}}\leqslant \epsilon.$$
			So using \eqref{5-16} we have 
			\begin{equation}\label{5-17}
			\begin{array}{l}
			\|\overline{z}_{n}-\overline{z}_{m}\|^{2}_{L^{2}(T_{\epsilon},\infty,{(1+t)^{-1}}dt;{L}^{2}(\Omega)\times\mathbb{R}^{N_{c}})}\leqslant 4\epsilon,
			\end{array}
			\end{equation}
			for all $m,n\in \mathbb{N}.$
			\newline
			We know from Rellich's compactness theorem and Aubin-Lions lemma (\cite{aubin}) that the embedding of ${V}^{2,1}(Q_{T_{\epsilon}})\times H^{1}(0,T_{\epsilon};\mathbb{R}^{N_{c}})$ into $L^{2}(0,T_{\epsilon},{L}^{2}(\Omega)\times\mathbb{R}^{N_{c}})$ is compact. Hence up to a subsequence (denoted by the same notation) $\{\overline{z}_{n}\}_{n}$ is Cauchy in $L^{2}(0,T_{\epsilon},{L}^{2}(\Omega)\times\mathbb{R}^{N_{c}}).$
			\newline
			So it follows that there exists $N_{0}\in\mathbb{N}$ such that for all natural numbers $m,n\geqslant N_{0},$
			\begin{equation}\label{5-18}
			\begin{array}{l}
			\|\overline{z}_{n}-\overline{z}_{m}\|^{2}_{L^{2}(0,T_{\epsilon},{(1+t)^{-1}}dt,{ L^{2}(\Omega)}\times\mathbb{R}^{N_{c}})}\leqslant\epsilon.
			\end{array}
			\end{equation}
			Now combining \eqref{5-17}, \eqref{5-18} and a diagonal extraction argument, we can construct a subsequence $\{\overline{z}_{n}\}_{n}$ which is a Cauchy sequence in the Banach space $L^{2}(0,\infty,{(1+t)^{-1}}dt;{L}^{2}(\Omega)\times\mathbb{R}^{N_{c}})$.
			\newline
			The proof is complete.
		\end{proof}

	\begin{lem}\label{l5-5}
		If a sequence $\{\overline{z}_{n}\}$ in $D_{\mu}$ converges weakly to some $\overline{z}$ in ${V}^{2,1}(Q_{\infty})\times H^{1}(0,\infty;\mathbb{R}^{N_{c}})$ then up to a subsequence
		\begin{equation}\label{5-19}
			e^{-\beta t}\overline{z}_{n} \xrightarrow[n\rightarrow \infty]{}e^{-\beta t}\overline{z}\quad\mbox{strongly in}\quad L^{2}(0,\infty;{L}^{2}(\Omega)\times\mathbb{R}^{N_{c}}).
		\end{equation}
	\end{lem}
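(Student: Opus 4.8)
The plan is to combine two ingredients: a local-in-time compactness coming from the Aubin--Lions lemma, and a control of the tail $t\to\infty$ coming from the exponential weight $e^{-\beta t}$ together with the uniform-in-time bounds furnished by the continuous embeddings $V^{2,1}(Q_{\infty})\hookrightarrow L^{\infty}(0,\infty;H^{1}(\Omega))$ (already used in the proof of Lemma \ref{l5-2}) and $H^{1}(0,\infty;\mathbb{R}^{N_{c}})\hookrightarrow L^{\infty}(0,\infty;\mathbb{R}^{N_{c}})$. The key point is that on $(0,\infty)$ the embedding into $L^{2}(0,\infty;L^{2}(\Omega)\times\mathbb{R}^{N_{c}})$ is \emph{not} compact, so local compactness must be supplemented by a uniform smallness of the tail.

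First I would record the uniform bound. Since each $\overline{z}_{n}$ lies in $D_{\mu}$, one has $\|\overline{z}_{n}\|_{V^{2,1}(Q_{\infty})\times H^{1}(0,\infty;\mathbb{R}^{N_{c}})}\leqslant\mu$, and the two embeddings above yield a constant $C>0$ with $\sup_{t\geqslant 0}\|\overline{z}_{n}(t)\|_{L^{2}(\Omega)\times\mathbb{R}^{N_{c}}}\leqslant C$ for all $n$; by weak lower semicontinuity the same bound holds for the limit $\overline{z}$. Consequently, for every $T>0$,
$$
\int\limits_{T}^{\infty} e^{-2\beta t}\|\overline{z}_{n}(t)-\overline{z}(t)\|_{L^{2}(\Omega)\times\mathbb{R}^{N_{c}}}^{2}\,dt
\leqslant 4C^{2}\int\limits_{T}^{\infty}e^{-2\beta t}\,dt=\frac{2C^{2}}{\beta}\,e^{-2\beta T},
$$
which tends to $0$ as $T\to\infty$ uniformly in $n$. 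This disposes of the tail simultaneously for all $\overline{z}_{n}$ and for $\overline{z}$.

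Next, for each fixed $k\in\mathbb{N}$, the Aubin--Lions lemma (as already invoked in Step 2 of the proof of Lemma \ref{l5-4}) gives that $V^{2,1}(Q_{k})\times H^{1}(0,k;\mathbb{R}^{N_{c}})$ embeds compactly into $L^{2}(0,k;L^{2}(\Omega)\times\mathbb{R}^{N_{c}})$. From the bounded sequence $\{\overline{z}_{n}\}$ I would therefore extract, by a diagonal argument over $k=1,2,\dots$, a single subsequence (not relabelled) converging strongly in $L^{2}(0,k;L^{2}(\Omega)\times\mathbb{R}^{N_{c}})$ for every $k$; the strong limit necessarily coincides with $\overline{z}$ because $\overline{z}_{n}\rightharpoonup\overline{z}$ weakly. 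Then, given $\varepsilon>0$, I would fix $T=k$ so that $\tfrac{2C^{2}}{\beta}e^{-2\beta T}<\varepsilon/2$, and use the strong $L^{2}(0,k)$ convergence together with $e^{-2\beta t}\leqslant 1$ to get $\int_{0}^{k}e^{-2\beta t}\|\overline{z}_{n}-\overline{z}\|^{2}\,dt\leqslant\|\overline{z}_{n}-\overline{z}\|_{L^{2}(0,k;L^{2}\times\mathbb{R}^{N_{c}})}^{2}<\varepsilon/2$ for $n$ large; adding the two pieces yields \eqref{5-19}.

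The only genuine obstacle is the non-compactness of the time interval $(0,\infty)$: local (finite-time) compactness alone does not give convergence globally in time. The crux is thus to play the decaying weight $e^{-\beta t}$ against the uniform $L^{\infty}_{t}$ bound in order to make the tail uniformly small; once that is secured, everything else reduces to the standard finite-interval compactness and a diagonal extraction.
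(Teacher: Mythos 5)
Your argument is correct and is essentially the paper's: the paper proves Lemma \ref{l5-5} by pointing to the proof of Lemma \ref{l5-4}, which uses exactly your scheme of finite-interval compactness via Rellich/Aubin--Lions, a uniformly small tail coming from the decaying weight, and a diagonal extraction. The only cosmetic difference is that you bound the tail using the $L^{\infty}_{t}$ bound together with $\int_{T}^{\infty}e^{-2\beta t}\,dt$, whereas the paper's tail estimate in Lemma \ref{l5-4} uses the $L^{2}_{t}$ bound together with the sup of the weight on $[T,\infty)$; both are equally valid here.
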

	\begin{proof}
	The proof follows from the arguments used in proving Lemma \ref{l5-4} and is left to the reader.
	\end{proof}
	\begin{lem}\label{l5-6}
		The map $\chi$ is continuous in $D_{\mu},$ endowed with the norm $L^{2}(0,\infty,{(1+t)^{-1}}dt;{L}^{2}(\Omega))$.
	\end{lem}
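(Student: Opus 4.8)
The plan is to argue sequentially. I take a sequence $(\widehat y_n,\widehat w_{n,c})$ in $D_\mu$ converging to $(\widehat y,\widehat w_c)$ in the weighted norm $L^2(0,\infty,(1+t)^{-1}dt;L^2(\Omega)\times\mathbb R^{N_c})$, set $(y_n,w_{n,c})=\chi(\widehat y_n,\widehat w_{n,c})$, and show $(y_n,w_{n,c})\to\chi(\widehat y,\widehat w_c)$ in the same topology. By Lemma \ref{l5-3} each image lies in $D_\mu$, which is bounded in ${V}^{2,1}(Q_\infty)\times H^1(0,\infty;\mathbb R^{N_c})$ and compact in the weighted topology (Lemma \ref{l5-4}). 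Hence from any subsequence I can extract a further one along which $(y_n,w_{n,c})$ converges strongly in the weighted topology to some $(y_\ast,w_\ast)$ and weakly in ${V}^{2,1}(Q_\infty)\times H^1(0,\infty;\mathbb R^{N_c})$; likewise I may assume $\widehat y_n\rightharpoonup\widehat y$ weakly in ${V}^{2,1}(Q_\infty)$ and $\widehat w_{n,c}\rightharpoonup\widehat w_c$ weakly in $H^1(0,\infty;\mathbb R^{N_c})$, the weak limits being forced to coincide with the strong weighted ones. It then suffices to prove $(y_\ast,w_\ast)=\chi(\widehat y,\widehat w_c)$; uniqueness of this limit, independent of the extracted subsequence, yields convergence of the whole sequence, hence the continuity of $\chi$.

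Next I would pass to the limit in the density. Lemma \ref{l5-5} gives $e^{-\beta t}\widehat y_n\to e^{-\beta t}\widehat y$ strongly in $L^2(0,\infty;L^2(\Omega))$, so $v_s+e^{-\beta t}\widehat y_n\to v_s+e^{-\beta t}\widehat y$ in $L^1(Q_T)$ for every finite $T$; moreover on $\Sigma_\infty$ one has $\widehat y_n=\sum_j\widehat w_{n,j}g_j$ with $\widehat w_{n,c}\to\widehat w_c$ strongly in $L^2(0,T;\mathbb R^{N_c})$ (compact embedding $H^1(0,T)\hookrightarrow L^2(0,T)$), which provides the convergence of the normal traces in $L^1(\Sigma_T)$. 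The stability result Lemma \ref{l4-4} then yields $\widehat\sigma_n\to\widehat\sigma$ in $C^0([0,T],L^p(\Omega))$ for all $1\leqslant p<\infty$, where $\widehat\sigma$ is the transport solution associated with $\widehat y$, while $\|\widehat\sigma_n\|_{L^\infty(Q_\infty)}$ stays uniformly bounded by \eqref{bdtran}.

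The delicate point is to identify the weak $L^2(Q_\infty)$ limit of the source $\mathcal F(\widehat y_n,\widehat\sigma_n)$. On a finite interval $[0,T]$, Aubin-Lions compactness upgrades the weak ${V}^{2,1}$ convergence of $\widehat y_n$ to strong convergence in $L^2(0,T;H^1(\Omega))$, hence in $L^p(0,T;L^q(\Omega))$ for all $p,q<\infty$, whereas $\partial_t\widehat y_n$ and $\nabla\widehat y_n$ converge only weakly in $L^2$. Combining the strong convergence of $\widehat\sigma_n$ and of $\widehat y_n$ with the weak convergence of $\partial_t\widehat y_n$ and $\nabla\widehat y_n$, each of the six products composing $\mathcal F$ is bounded in $L^2(Q_T)$ (exactly as in the estimates leading to \eqref{5-9}) and converges in the sense of distributions to the corresponding product built from $(\widehat y,\widehat\sigma)$, hence weakly in $L^2(Q_T)$; the exponential factors $e^{-\beta t}$ and $e^{-2\beta t}$ render the time-tails $t>T$ uniformly small, so that $\mathcal F(\widehat y_n,\widehat\sigma_n)\rightharpoonup\mathcal F(\widehat y,\widehat\sigma)$ weakly in $L^2(Q_\infty)$. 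The main obstacle is precisely that the quadratic terms $(\widehat y_n\cdot\nabla)\widehat y_n$ and $\widehat\sigma_n\,\partial_t\widehat y_n$ converge only weakly — no strong control on $\nabla\widehat y_n$ or $\partial_t\widehat y_n$ being available — which is why I cannot simply invoke the Lipschitz bound \eqref{esty} of the linear solution operator and must instead identify the limit by passing to the limit in the equations.

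Finally, with $(y_n,w_{n,c})\rightharpoonup(y_\ast,w_\ast)$ weakly in ${V}^{2,1}(Q_\infty)\times H^1(0,\infty;\mathbb R^{N_c})$ and $\mathcal F(\widehat y_n,\widehat\sigma_n)\rightharpoonup\mathcal F(\widehat y,\widehat\sigma)$ weakly in $L^2(Q_\infty)$, I would pass to the limit in the weak formulation of \eqref{5-2}$_{4}$-\eqref{5-2}$_{10}$: the linear Oseen terms and the initial and boundary conditions pass by weak continuity of the trace and time-evaluation maps, while the feedback satisfies $\mathcal K(Py_n,w_{n,c})\rightharpoonup\mathcal K(Py_\ast,w_\ast)$ weakly in $L^2(0,\infty;\mathbb R^{N_c})$ since $\mathcal K\in\mathcal L(\widetilde Z,\mathbb R^{N_c})$ is bounded and linear. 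This shows that $(y_\ast,w_\ast)$ solves the closed-loop system with source $\mathcal F(\widehat y,\widehat\sigma)$ and datum $y_0$, so by the uniqueness part of Corollary \ref{c3-8} one obtains $(y_\ast,w_\ast)=\chi(\widehat y,\widehat w_c)$, which closes the argument.
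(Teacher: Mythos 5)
Your proposal is correct and follows essentially the same route as the paper: extraction of weakly convergent subsequences, stability of the transport equation (Lemma \ref{l4-4}) to pass to the limit in $\widehat\sigma_n$, term-by-term identification of the weak $L^2(Q_\infty)$ limit of $\mathcal F(\widehat y_n,\widehat\sigma_n)$ by pairing strong and weak convergences, and the compactness of $D_{\mu}$ (Lemma \ref{l5-4}) to upgrade to strong convergence in the weighted norm. The only divergence is at the very end: where you pass to the limit in the weak formulation of the closed-loop system and invoke uniqueness, the paper simply observes that the solution map $(f,y_0)\mapsto\overline y$ of Corollary \ref{c3-8} is linear and bounded, hence weakly continuous, so the weak convergence of the sources directly transfers to the images --- contrary to your remark, the linear solution operator can be invoked here, just through its weak rather than norm continuity; both arguments are valid and equivalent.
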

		\begin{proof}
			Let $\{\overline{\widehat{y}}_{n}\}_{n}$ $\left( \mbox{where}\,\, \overline{\widehat{y}}_{n}=(
			{\widehat{y}}_{n},{\widehat{w}_{n,c}}
			)\right)$ be a sequence in $D_{\mu}$ and assume that this sequence  $\{\overline{\widehat{y}}_{n}\}_{n}$ strongly converges to $\overline{\widehat{y}}$ $\left( \mbox{where}\,\, \overline{\widehat{y}}=(
			{\widehat{y}},\widehat{w}_{c}
			)\right)$ in the norm $L^{2}(0,\infty,{(1+t)^{-1}}dt;{L}^{2}(\Omega)\times\mathbb{R}^{N_{c}}).$
			\newline
			As for all $n\in\mathbb{N},$ $\|\overline{\widehat{y}}_{n}\|_{V^{2,1}(Q_{\infty})\times H^{1}(0,\infty;\mathbb{R}^{{N_{c}}})}\leqslant\mu,$ up to a subsequence we have the following weak convergence
			\begin{equation}\label{conyn}
			\begin{array}{l}
			\{\overline{\widehat{y}}_{n}\}_{n}\rightharpoonup \overline{\widehat{y}}\quad\mbox{in}\quad V^{2,1}(Q_{\infty})\times H^{1}(0,\infty;\mathbb{R}^{N_{c}})\quad\mbox{as}\quad n\rightarrow\infty.
			\end{array}
			\end{equation}
			Now corresponding to the vector field ${\widehat{y}_{n}},$ let us denote by ${\widehat{\sigma}_{n}}$ the solutions to \eqref{5-2}$_{1}$-\eqref{5-2}$_{3}$. Similarly $\widehat{\sigma}$ is the solution to \eqref{5-2}$_{1}$-\eqref{5-2}$_{3}$ which corresponds to the vector field ${\widehat{y}}.$ As ${\widehat{y}_{n}}$ converges strongly to ${\widehat{y}}$ in the norm  $L^{2}(0,\infty,{(1+t)^{-1}}dt,{L}^{2}(\Omega)),$ for any $T>0,$ ${\widehat{y}_{n}}$ converges to ${\widehat{y}}$ in particular in the norm $L^{1}(Q_{T}).$ Besides, the strong $L^1(\Sigma_T)$ convergence of ${\widehat{y}_{n}} \cdot \vec{n}$ towards ${\widehat{y}} \cdot \vec{n}$ is obvious in view of the identities \eqref{hatybd} and the strong convergence of $\widehat{w}_n$ to $\widehat{w}$ in $L^1(0,T)$, which immediately follows from the weak convergence of $\widehat{w}_n$ to $\widehat{w}$ in $H^1(0,\infty)$.  Hence from Lemma \ref{l4-4}, we obtain that ${\widehat{\sigma}_{n}}$ strongly converges to $\widehat{\sigma}$ in $C^{0}([0,T],L^{q}(\Omega))$ for all $1\leqslant q <+\infty.$ Due to the suitable choice of $\mu$ in Lemma \ref{l5-3}, we can conclude from Theorem \ref{t4-1} (in particular from \eqref{fiessig}) that each of ${\widehat{\sigma}_{n}}$ and $\widehat{\sigma}$ vanishes for $t\geqslant T_{1}.$ So 
			\begin{equation}\label{5-20}
			\begin{array}{l}
			\widehat{\sigma}_{n} \xrightarrow[n\rightarrow \infty]{}\widehat{\sigma}\quad\mbox{strongly in}\quad L^{\infty}(0,\infty;L^{q}(\Omega))\,\,\forall\,\,1\leqslant q<+\infty,
			\vspace{1.mm}\\ 
			\forall n\in\mathbb{N} ,\,{\widehat{\sigma}_{n}} (t)=\widehat{\sigma}(t) =0\quad \mbox{for all}\quad t\geqslant T_{1}.
			\end{array}
			\end{equation}
			Also from \eqref{bdtran} and \eqref{smu0} we know that the $L^{\infty}(Q_{\infty})$ norm of the sequence ${\widehat{\sigma}_{n}}$ is uniformly bounded.
			\newline
			We will now check that $\mathcal{F}(\widehat{y}_{n},{\widehat{\sigma}_{n}})$ converges weakly in $L^{2}(Q_{\infty})$ to $\mathcal{F}(\widehat{y},\widehat{\sigma}).$
			As $({\widehat{y}_{n}},{\widehat{w}_{n,c}})\in D_{\mu},$ from the estimate \eqref{5-9} we obtain a uniform bound for $\| \mathcal{F}({\widehat{y}_{n}},{\widehat{\sigma}_{n}})\|_{L^{2}(Q_{\infty})}.$ So there exists a subsequence of $\mathcal{F}(\widehat{y}_{n},{\widehat{\sigma}_{n}})$ which weakly converges in $L^{2}(0,\infty;{L}^{2}(\Omega)).$ This is therefore enough to show that the sequence $\mathcal{F}({\widehat{y}_{n}},{\widehat{\sigma}_{n}})$ converges to $\mathcal{F}({\widehat{y}},\widehat{\sigma})$ weakly in $\mathscr{D}'(Q_{\infty})$ ($i.e.$ in the sense of distribution).
			\newline
			Let us first check the weak convergence of the term $-e^{-\beta t}{\widehat{\sigma}_{n}}\frac{\partial{\widehat{y}_{n}}}{\partial t}.$ 
			From \eqref{5-20} we know that ${\widehat{\sigma}_{n}}$ strongly converges to $\widehat{\sigma}$ in $L^{2}(Q_{\infty})$ and each of ${\widehat{\sigma}_{n}}$ and $\widehat{\sigma}$ vanishes for all $t\geqslant T_{1}$ (see \eqref{5-20}). Also from \eqref{conyn} we have that $\frac{\partial \widehat{y}_{n}}{\partial t}$ converges weakly to $\frac{\partial \widehat{y}_{n}}{\partial t}$ in $L^{2}(Q_{\infty}).$ Hence their product ${\widehat{\sigma}_{n}}\frac{\partial \widehat{y}_{n}}{\partial t}$ converges weakly to $\widehat{\sigma}\frac{\partial \widehat{y}}{\partial t}$ in $L^{1}(Q_{\infty}).$ So it is now easy to verify that $e^{-\beta t}{\widehat{\sigma}_{n}}\frac{\partial \widehat{y}_{n}}{\partial t}$ converges to $e^{-\beta t}\widehat{\sigma}\frac{\partial \widehat{y}}{\partial t}$ weakly in $L^{1}(Q_{\infty}).$\\
			Now we consider $e^{-2\beta t}({\widehat{y}_{n}}\cdot\nabla){\widehat{y}_{n}}.$ As $\widehat{y}_{n}$ is bounded and weakly convergent to $\widehat{y}$ in ${V}^{2,1}(Q_{\infty}),$ using Lemma \ref{l5-5}, we have
			\begin{equation}\label{5-22}
			\begin{array}{l}
			e^{-2\beta t}{\widehat{y}_{n}} \xrightarrow[n\rightarrow \infty]{} e^{-2\beta t}\widehat{y}\quad \mbox{strongly in}\quad L^{2}(Q_{\infty}), 
			\end{array}
			\end{equation}  
			and
			\begin{equation}\label{5-23}
			\begin{array}{l}
			\nabla{\widehat{y}_{n}} \rightharpoonup\nabla\widehat{y}\quad\mbox{in}\quad L^{2}(Q_{\infty})\quad\mbox{as}\quad\,n\rightarrow\infty.
			\end{array}
			\end{equation} 			
			Therefore $e^{-2\beta t}(\widehat{y}_{n}\cdot\nabla)\widehat{y}$ converges to $e^{-2\beta t}(\widehat{y}\cdot\nabla)\widehat{y}$ weakly in $L^{1}(Q_{\infty}).$
			\newline
			Since $\widehat{y}_{n}$ converges weakly to $y$ in $V^{2,1}(Q_{\infty}),$ one has the following
			$$
			\nabla\widehat{y}_{n}\rightharpoonup \nabla\widehat{y}\quad\mbox{in}\quad L^{\infty}(0,\infty;L^{2}(\Omega))\cap L^{2}(0,\infty;H^{1}(\Omega))\quad\mbox{as}\quad n\rightarrow\infty.
			$$
			We use the interpolation result \cite[Theorem II.5.5]{boy} to obtain the following in particular
			\begin{equation}\label{congy}
			\begin{array}{l}
			\nabla\widehat{y}_{n}\rightharpoonup \nabla\widehat{y}\quad\mbox{in}\quad L^{3}(0,\infty;L^{3}(\Omega))\quad\mbox{as}\quad n\rightarrow\infty.
			\end{array}
			\end{equation}
			Using \eqref{5-20}, \eqref{5-22} and \eqref{congy} one has the following weak convergence
			$$
			e^{-2\beta t}{\widehat{\sigma}_{n}}(\widehat{y}_{n}\cdot\nabla)\widehat{y}_{n}  \rightharpoonup e^{-2\beta t}\widehat{\sigma}(\widehat{y}\cdot\nabla)\widehat{y}\quad\mbox{in}\quad L^{1}(Q_{\infty})\quad\mbox{as}\quad n\rightarrow\infty.
			$$
			The convergences of the remaining terms $e^{-\beta t}{\widehat{\sigma}_{n}}(v_{s}\cdot\nabla)\widehat{y}_{n},$ $e^{-\beta t}{\widehat{\sigma}_{n}}(\widehat{y}_{n}\cdot\nabla)v_{s}$ and $\beta e^{-\beta t}{\widehat{\sigma}_{n}}{\widehat{y}_{n}}$ can be analyzed similarly using the convergences \eqref{conyn} and \eqref{5-20}$_{1}.$ We thus conclude that $\mathcal{F}(\widehat{y}_{n},{\widehat{\sigma}_{n}})$ converges weakly to $\mathcal{F}(\widehat{y},\widehat{\sigma})$ in the space  $\mathscr{D}'(Q_{\infty}).$ Hence this is also the $L^{2}(Q_{\infty})$ weak limit.\\
			From Corollary \ref{c3-8}, we know that for the closed loop system \eqref{114r}, the map
			\begin{equation}\nonumber
			\begin{array}{l}
			\begin{matrix}
			L^{2}(0,\infty;{L}^{2}(\Omega))\times {V}^{1}_{0}(\Omega) & \mapsto &  {V}^{2,1}(Q_{\infty})\times H^{1}(0,\infty;\mathbb{R}^{N_{c}})\\
			({f},{y}_{0}) & \mapsto & \overline{y}
			\end{matrix}
			\end{array}
			\end{equation}
			is linear and bounded. Hence we obtain that $\overline{y}_{n}=\chi(\overline{{\widehat{y}}}_{n})$ ($\overline{y}_{n}=(
			y_{n},
			w_{n,c}
			)$) weakly converges to $\overline{y}=\chi(\overline{{\widehat{y}}})$ ($\overline{y}=(
			y,
			w_{c}
			)$) in $(D_{\mu},\|.\|_{V^{2,1}(Q_{\infty})\times H^{1}(0,\infty;\mathbb{R}^{N_{c}})}).$ Finally as $D_{\mu}$ is compact in $L^{2}(0,\infty;{(1+t)^{-1}}dt,\\{L}^{2}(\Omega)\times\mathbb{R}^{N_{c}})$ (see Lemma \ref{l5-4}), $\overline{y}_{n}$ strongly converges to $\overline{y}$ in $L^{2}(0,\infty;{(1+t)^{-1}}dt,{L}^{2}(\Omega)\times\mathbb{R}^{N_{c}}).$
			The proof of Lemma \ref{l5-6} is complete.
		\end{proof}	

	\subsection{Conclusion}\label{conc}
	Let $\mu$ is as in Lemma \ref{l5-3}. Then\\
	$(i)$\, For an initial datum $(\sigma_{0},y_{0})$ satisfying \eqref{smu0}, the map $\chi$ defined in \eqref{chi} maps $D_{\mu}$ defined in \eqref{Dmu} into itself.\\
	$(ii)$\, The non-empty convex set $D_{\mu}$ is compact in the topology of $L^{2}(0,\infty,{(1+t)^{-1}}dt;{L}^{2}(\Omega)\times\mathbb{R}^{N_{c}})$ (see Lemma \ref{l5-4}).\\
	$(iii)$\, The map $\chi$ is continuous on $D_{\mu},$ endowed with the norm $L^{2}(0,\infty,{(1+t)^{-1}}dt;{L}^{2}(\Omega))$ (Lemma \ref{l5-6}).\\  
	One observes that all the assumptions of Schauder fixed point theorem are satisfied by the map $\chi$ on $D_{\mu},$ endowed with the norm $L^{2}(0,\infty,{(1+t)^{-1}}dt;{L}^{2}(\Omega)\times\mathbb{R}^{N_{c}}).$ Therefore, Schauder fixed point theorem yields a fixed point $(
	{y}_{f},
	w_{{f},c}
	)$ of the map $\chi$ in $D_{\mu}.$ Hence the trajectory $(
	\sigma_{{f}},
	y_{f},w_{f,c}
	)$ solves the non linear problem \eqref{5-2*}. Moreover, as a consequence of Theorem \ref{t4-1} the following holds 
	\begin{equation}\label{stabreq}
	\begin{array}{l}
	\sigma_{{f}}(.,t)=0\quad\mbox{in}\quad\Omega\quad\mbox{for}\quad t\geqslant T_{1}.\\
	\end{array}
	\end{equation}
	Using \eqref{setmu} in \eqref{5-15} and \eqref{mu0}, one further obtains
	\begin{equation}\label{bndywc}
	\begin{array}{l}
	\|({y}_{f},w_{f,c})\|_{{V}^{2,1}(Q_{\infty})\times H^{1}(0,\infty;\mathbb{R}^{N_{c}})}\leqslant
	C\mbox{max}\,\{e^{\beta T_{1}}\|\sigma_{0}\|_{L^{\infty}(\Omega)},\|{y}_{0}\|_{{V}^{1}_{0}(\Omega)}\},
	\end{array}
	\end{equation}
	for some positive constant $C.$ Once again using Theorem \ref{t4-1}, \eqref{bndywc} furnish the following continuous dependence on initial data
	\begin{equation}\label{finalest}
	\begin{array}{l}
	\|(\sigma_{{f}},y_{f})\|_{L^{\infty}(Q_{\infty})\times V^{2,1}(Q_{\infty})}\leqslant C\|(\sigma_{0},y_{0})\|_{L^{\infty}(\Omega)\times V^{1}_{0}(\Omega)},
	\end{array}
	\end{equation}
	for some positive constant $C.$ Now in view of the change of unknowns \eqref{chun}, we obtain the existence of a trajectory $(\rho,v)\in L^{\infty}(Q_{\infty})\times V^{2,1}(Q_{\infty})$ which solves \eqref{1-3} and satisfies the decay estimate \eqref{1-6}. The proof of Theorem \ref{main} is complete.
\end{proof}
\section{Further comments}\label{furcom} Our result considers that the control $u_{c}$ is supported on $\Gamma_{c},$ which is an open subset of the inflow part $\Gamma_{in}$ (see \eqref{dgc}) of the boundary. This is in fact natural to control the inflow boundary of the channel. At the same time we remark that our analysis applies if one wants to control the outflow boundary $\Gamma_{out}$ or the lateral boundary $\Gamma_{0}$ of the channel $\Omega.$ In what follows we briefly discuss these cases.\\
(i) \textbf{Controlling the outflow boundary.} In this case the control zone $\Gamma_{c}$ is an open subset of $\Gamma_{out}.$ After the change of unknowns \eqref{chun}, one can imitate the linearization procedure (as done while transforming \eqref{2-1} into \eqref{2-2}). In this linearized system the transport equation modeling the density \eqref{2-2}$_{1}$-\eqref{2-2}$_{3}$ will remain unchanged but the boundary conditions on the velocity equations \eqref{2-2}$_{4}$-\eqref{2-2}$_{8}$ should be replaced by $y=0$ on $(\Gamma_{0}\cup\Gamma_{in})\times(0,\infty)$ and $y=\sum\limits_{j=1}^{N_{c}}{w_{j}}(t){{g}_{j}}(x)$ on $\Gamma_{out}\times(0,\infty).$ Still the proof of the boundary controllability of the Oseen equations can be carried in a similar way as done in Section \ref{velocity} and in the same spirit of Corollary \ref{p3.0.2}, one can prove that if the initial condition $y_{0}$ and the non-homogeneous term $f$ are suitably small then the inflow and the outflow boundaries of the perturbed vector field $(v_{s}+e^{-\beta t}y)$ coincide with that of $v_{s}.$ Since the transport equation \eqref{2-2}$_{1}$-\eqref{2-2}$_{3}$ remains unchanged in this case, the analysis done in Section \ref{density} applies without any change. The fixed point argument done in Section \ref{final} to prove the stabilization of the coupled system \eqref{1-3} also applies without change.\\
(ii) \textbf{Controlling the lateral boundary.} In this case the control zone $\Gamma_{c}$ is an open subset of $\Gamma_{0}.$ In particular we assume that $\Gamma_{c}\subset\Gamma_{b}$ (where $\Gamma_{b}=(0,d)\times\{0\}\subset \Gamma_{0}$). Now the inflow and outflow boundaries of the velocity vector $(e^{-\beta t}y+v_{s})$ cannot be characterized by using the notations $\Gamma_{in}$ and $\Gamma_{out}$ (as defined in \eqref{1-2}), since $\Gamma_{c}$ can contain an inflow part and an outflow part and one can not prove a result similar to Corollary \ref{p3.0.2}. More precisely here we can use the following notations for time $t>0,$
\begin{equation}\label{infl}
\left\{ \begin{array}{l}
\Gamma^{*}_{in,y}(t)=\Gamma_{in}\cup \{x\in\Gamma_{c}\suchthat( v_s (x)+ e^{-\beta t} y(x,t)\cdot n(x))<0\}\subset \Gamma_{in}\cup\Gamma_{b},\\
\Gamma_{h}=(0,d)\times\{1\}.
\end{array}\right.
\end{equation} 
In a similar way as we have obtained \eqref{2-2} from \eqref{1-3}, one gets the following system
\begin{equation}\label{2-2*}
\left\{\begin{array}{ll}
\displaystyle
\frac{\partial \sigma}{\partial t}+(({v}_s+e^{-\beta t}{y}) \cdot \nabla)\sigma-\beta\sigma=0\quad &\mbox{in} \quad Q_{\infty},
\vspace{1.mm}\\
\sigma (x,t)=0 \quad& \mbox{on} \quad \bigcup_{t\in(0,\infty)}(\Gamma^{*}_{in,y}(t) \times\{t\}),
\vspace{1.mm}\\
\sigma (x,0)=\sigma_0\quad&\mbox{in}\quad\Omega,\\[1.mm]
\displaystyle \frac{\partial {y}}{\partial t}-\beta {y}-\nu \Delta {y}+ ({v}_s \cdot \nabla){y}+({y} \cdot \nabla){v}_s +\nabla  q={f}\quad &\mbox{in}\quad Q_{\infty},
\vspace{2.mm}\\
\mbox{div}{y}=0\quad& \mbox{in}\quad Q_{\infty},
\vspace{2.mm}\\
{y}=0\quad& \mbox {on} \quad (\Gamma_{in}\cup \Gamma_{h}\cup \Gamma_{out}) \times (0,\infty),\\[1.mm]
{y}=\sum\limits_{j=1}^{N_{c}}{w_{j}}(t){{g}_{j}}(x) \quad& \mbox{on} \quad \ \Gamma_{b} \times (0,\infty),
\vspace{1.mm}\\
{y}(x,0)={y}_0\quad&\mbox{in}\quad\Omega.
\end{array}\right.
\end{equation}
One can use arguments similar to the ones in Section \ref{velocity} in order to stabilize $y$ solving \eqref{2-2*}$_{4}$-\eqref{2-2*}$_{8}.$ The functions ${g_{j}}$ can be constructed with compact support in $\Gamma_{b}$ (imitating the construction \eqref{basisU0}), and we can recover the $C^{\infty}$ regularity of the boundary control and $V^{2,1}(Q_{\infty})$ regularity of $y.$ Hence the flow corresponding to the vector field $(e^{-\beta t}y+v_{s})$ is well defined in classical sense, consequently one can adapt the arguments used in Section \ref{density} to prove that $\sigma,$ the solution of \eqref{2-2*}$_{1}$-\eqref{2-2*}$_{3}$ belongs to $L^{\infty}(Q_{\infty})$ and vanishes after some finite time provided the initial condition $\sigma_{0}$ is supported away from the lateral boundaries and $y$ is small enough. The use of a fixed point argument to prove the stabilizability of the solution of \eqref{1-3} is again a straightforward adaptation of the arguments used in Section \ref{final}.   
\bibliographystyle{plain}
\bibliography{bibliography}

\end{document}